\documentclass[10pt, a4paper]{article}
\usepackage{titling}
\title{Finite element approximation of \protect\\ an anisotropic porous medium equation \protect\\ with fractional pressure}
\newcommand{\titlePDF}{Finite Element Scheme for the Fractional Porous Medium Equation
with Fractional Pressure}
\newcommand{\authorPDF}{Carrillo, Fronzoni, Suli.}
\newcommand{\subjectPDF}{}
\newcommand{\keywordsPDF}{}

 \usepackage{authblk}
\usepackage{verbatim}

\author[1]{ Stefano Fronzoni}

\affil[1]{
	Mathematical Institute, University of Oxford
}
\affil[ ]{
	OX2 6GG Oxford, United Kingdom
}
\affil[ ]{\textit{ stefano.fronzoni@maths.ox.ac.uk,}}

\usepackage[left=0.7in, right=0.7in, top=1.1in, bottom=1.1in]{geometry}

\makeatletter
\let\newtitle\@title
\let\newauthor\@author
\let\newdate\@date
\makeatother
\usepackage{fancyhdr}
\usepackage[
	hypertexnames=false,
	pdftex,
	pdftitle={\titlePDF. \authorPDF},
	pdfauthor={\authorPDF},
	pdfsubject={\subjectPDF},
	pdfkeywords={\keywordsPDF},
]{hyperref}
\hypersetup{
	colorlinks=true,
	linkcolor=blue,
	citecolor=blue,
	urlcolor=blue,
	linktocpage
}

\usepackage[utf8]{inputenc}
\usepackage[T1]{fontenc}
\usepackage{amsmath} 
\usepackage{amsthm}
\usepackage{amsfonts}
\usepackage{amssymb}
\usepackage{graphicx}
\usepackage{mathtools}
\usepackage{enumitem}
\usepackage{ifthen}
\usepackage{dsfont}
\usepackage{float}
\usepackage{lettrine}
\usepackage{lipsum}
\usepackage{subcaption}
\usepackage{appendix}

\usepackage[UKenglish]{babel}

\usepackage[dvipsnames]{xcolor}

\definecolor{ppGreen}{HTML}{008000}
\definecolor{ppBlue}{HTML}{0000FF}
\definecolor{ppRed}{HTML}{FF0000}
\definecolor{ppPurple}{HTML}{800080}
\definecolor{lightblue}{rgb}{0.145,0.6666,1}
\definecolor{grey52}{RGB}{52,52,52}
\definecolor{color1}{RGB}{0,62,116}
\definecolor{color2}{RGB}{152,152,152}
\definecolor{color3}{RGB}{52,52,52}
\definecolor{color4}{RGB}{100,100,100}

\definecolor{imperialnavy}{RGB}{0,33,71}
\definecolor{imperialblue}{RGB}{0,62,116}
\definecolor{imperialgrey}{RGB}{235,238,238}
\definecolor{imperialcoolgrey}{RGB}{157,157,157}

\newcounter{review}

\newcommand{\ntcreview}[3]{\refstepcounter{review}{\color{#2}{\textbf{[#1]}: #3}}}

\newcommand{\creview}[3]{\ntcreview{#1}{#2}{#3}\addcontentsline{tor}{subsection}{\thereview~\textbf{[#1]}:~#3
	}}

\newcommand{\review}[2]{\creview{#1}{blue}{#2}}

\makeatletter

\newcommand\listreviewname{List of Reviews}
\newcommand\listofreviews{\section*{\listreviewname}\@starttoc{tor}}
\makeatother

\usepackage[all]{nowidow}

\newcommand{\subjectclassification}[1]{

	{\small\textbf{\textit{AMS Subject Classification --- }} #1}

}
\newcommand{\keywords}[1]{

	{\small\textbf{\textit{Keywords --- }} #1}

}

\usepackage{tikz}

\usepackage{pgfplots}
\pgfplotsset{compat=1.16}

\usepackage{float}
\usepackage{subcaption}
\usepackage[section]{placeins}
\usepackage{rotating}

\usepackage{array}
\newcolumntype{L}[1]{>{\raggedright\let\newline\\\arraybackslash\hspace{0pt}}m{#1}}
\newcolumntype{C}[1]{>{\centering\let\newline\\\arraybackslash\hspace{0pt}}m{#1}}
\newcolumntype{R}[1]{>{\raggedleft\let\newline\\\arraybackslash\hspace{0pt}}m{#1}}
\usepackage{booktabs}
\usepackage{tabularx}
\usepackage{multirow}

\usepackage{titling}

\usepackage[algoruled]{algorithm2e}

\usepackage[capitalise]{cleveref}

 \usepackage{accents}

\newcommand\term\emph

\numberwithin{equation}{section}

\usepackage{autobreak}

\makeatletter
\def\@maketitle{\newpage
	\begin{center}\let \footnote \thanks
		{\LARGE\bfseries \@title \par}\vskip 2.5em{\large
				\lineskip .5em\begin{tabular}[t]{c}\@author
				\end{tabular}\par}\vskip 1em{\large \@date}\end{center}\par
	\vskip 1.5em}
\makeatother



\newlist{keylist}{enumerate}{1}
\setlist[keylist]{label=\textbf{[\thekeylisti]}, ref=\textbf{\thekeylisti}} 

\usepackage{amsthm}
\theoremstyle{plain}
\newtheorem{theo}{Theorem}[section]
\newtheorem{lemma}[theo]{Lemma}
\newtheorem{prop}[theo]{Proposition}
\newtheorem{cor}[theo]{Corollary}

\theoremstyle{remark}
\newtheorem{remark}[theo]{\bf Remark}

\usepackage{esint}

\def\XXint#1#2#3{{\setbox0=\hbox{$#1{#2#3}{\int}$ }
			\vcenter{\hbox{$#2#3$ }}\kern-.6\wd0}}

\DeclarePairedDelimiter{\norm}{\|}{\|}

\DeclarePairedDelimiter{\inn}{\langle}{\rangle}

\usepackage{suffix}

\newcommand{\inner}[2]{\inn{#1,#2}}
\WithSuffix\newcommand\inner*[2]{\inn*{#1,#2}}

\DeclarePairedDelimiter{\positive}{(}{)^{+}}
\DeclarePairedDelimiter{\negative}{(}{)^{-}}

\newcommand\pos\positive
\renewcommand\neg\negative

\WithSuffix\newcommand\pos*{\positive*}
\WithSuffix\newcommand\neg*{\negative*}

\newcommand{\R}{{\mathbb{R}}}

\newcommand{\pnorm}[2]{\norm{#2}_{\L{#1}}}
\WithSuffix\newcommand\pnorm*[2]{\norm*{#2}_{\L{#1}}}

\newcommand{\psnorm}[3]{\norm{#3}_{\L{#1}(#2)}}
\WithSuffix\newcommand\psnorm*[3]{\norm*{#3}_{\L{#1}(#2)}}

\newcommand{\pnormp}[2]{\pnorm{#1}{#2}^{#1}}
\WithSuffix\newcommand\pnormp*[2]{\pnorm*{#1}{#2}^{#1}}

\newcommand{\psnormp}[3]{\psnorm{#1}{#2}{#3}^{#1}}
\WithSuffix\newcommand\psnormp*[3]{\psnorm*{#1}{#2}{#3}^{#1}}

\newcommand\svec\vec

\renewcommand{\vec}{\mathbf}
\renewcommand{\svec}{\boldsymbol}

\newcommand{\conv}{\ast}
\renewcommand{\star}{\conv}

\renewcommand{\d}{\mathrm{d}}
\newcommand{\dd}{\mathop{}\!\d}

\newcommand{\dt}{\dd t}

\newcommand{\dx}{\dd x}
\newcommand{\dy}{\dd y}

\newcommand{\ppr}{(r)}

\newcommand{\Wr}{^{W,\,\ppr}}

\newlength{\dhatheight}

\ifthenelse{\isundefined{\Wr}}{
	\newcommand{\Wr}{^{W,\,\ppr}}
}{
	\renewcommand{\Wr}{^{W,\,\ppr}}
}

\newcommand{\s}{s}

\newcommand{\rhodL}{\rho_{\delta, L}}

\newcommand{\rhodLh}{\rho_{h, \delta, L}}
\newcommand{\cdLh}{c_{h, \delta, L}}
\newcommand{\rhoL}{\rho_{L}}
\newcommand{\rhoLDt}{\rho^{\Delta t}_{L}}
\newcommand{\rhoLDtp}{\rho^{\Delta t, +}_{L}}
\newcommand{\rhoLDtm}{\rho^{\Delta t, -}_{L}}
\newcommand{\rhoLDtpm}{\rho^{\Delta t(, \pm)}_{L}}
\newcommand{\rhoDt}{\rho^{\Delta t}}
\newcommand{\rhoDtp}{\rho^{\Delta t, +}}
\newcommand{\rhoDtm}{\rho^{\Delta t, -}}
\newcommand{\rhoDtpm}{\rho^{\Delta t(, \pm)}}

\newcommand{\dtau}{\mathrm{d}\tau}

\newcommand*\circled[1]{\tikz[baseline=(char.base)]{
            \node[shape=circle,draw,inner sep=0.5pt] (char) {#1};}}

\newcommand{\LL}{\mathcal{L}}

\newcommand{\LN}{\mathcal{L}_{\mathrm{N}}}
\newcommand{\LNh}{\mathcal{L}_{\mathrm{N}, h}}
\newcommand{\ee}{\mathrm{e}}

\newcommand{\Hsd}{\mathcal{H}^s_\diamond}

\newcommand{\vrm}{\textrm{v}} \usepackage{todonotes}

\newif\ifskiptable

\pgfplotsset{colormap={hsv}{
			hsb(0.00cm)=(0.00,0,0.95);
			hsb(0.05cm)=(0.05,1,1);
			hsb(0.10cm)=(0.10,1,1);
			hsb(0.15cm)=(0.15,1,1);
			hsb(0.20cm)=(0.20,1,1);
			hsb(0.25cm)=(0.25,1,1);
			hsb(0.30cm)=(0.30,1,1);
			hsb(0.35cm)=(0.35,1,1);
			hsb(0.40cm)=(0.40,1,1);
			hsb(0.45cm)=(0.45,1,1);
			hsb(0.50cm)=(0.50,1,1);
			hsb(0.55cm)=(0.55,1,1);
			hsb(0.60cm)=(0.60,1,1);
			hsb(0.65cm)=(0.65,1,1);
			hsb(0.70cm)=(0.70,1,1);
			hsb(0.75cm)=(0.75,1,1);
			hsb(0.80cm)=(0.80,1,1);
			hsb(0.85cm)=(0.85,1,1);
			hsb(0.90cm)=(0.90,1,1);
			hsb(0.95cm)=(0.95,1,1);
			hsb(1.00cm)=(1.00,1,1);
		}
}

\pgfplotsset{colormap={hsvSoft}{
			hsb(0.00cm)=(0.00,0,0.95);
			hsb(0.05cm)=(0.05,1,1);
			hsb(0.10cm)=(0.10,1,1);
			hsb(0.15cm)=(0.15,1,1);
			hsb(0.20cm)=(0.20,1,1);
			hsb(0.25cm)=(0.25,1,1);
			hsb(0.30cm)=(0.30,1,1);
			hsb(0.35cm)=(0.35,1,1);
			hsb(0.40cm)=(0.40,1,1);
			hsb(0.45cm)=(0.45,1,1);
			hsb(0.50cm)=(0.50,1,1);
			hsb(0.55cm)=(0.55,1,1);
			hsb(0.60cm)=(0.60,1,1);
			hsb(0.65cm)=(0.65,1,1);
			hsb(0.70cm)=(0.70,1,1);
			hsb(0.75cm)=(0.75,1,1);
			hsb(0.80cm)=(0.80,1,1);
			hsb(0.85cm)=(0.85,1,1);
			hsb(0.90cm)=(0.90,1,1);
			hsb(0.95cm)=(0.95,1,1);
			hsb(1.00cm)=(0.00,0,0.95);
		}
}

\pgfplotsset{colormap={viridisFull}{
			rgb=(0.26700401, 0.00487433, 0.32941519)
			rgb=(0.26851048, 0.00960483, 0.33542652)
			rgb=(0.26994384, 0.01462494, 0.34137895)
			rgb=(0.27130489, 0.01994186, 0.34726862)
			rgb=(0.27259384, 0.02556309, 0.35309303)
			rgb=(0.27380934, 0.03149748, 0.35885256)
			rgb=(0.27495242, 0.03775181, 0.36454323)
			rgb=(0.27602238, 0.04416723, 0.37016418)
			rgb=(0.2770184 , 0.05034437, 0.37571452)
			rgb=(0.27794143, 0.05632444, 0.38119074)
			rgb=(0.27879067, 0.06214536, 0.38659204)
			rgb=(0.2795655 , 0.06783587, 0.39191723)
			rgb=(0.28026658, 0.07341724, 0.39716349)
			rgb=(0.28089358, 0.07890703, 0.40232944)
			rgb=(0.28144581, 0.0843197 , 0.40741404)
			rgb=(0.28192358, 0.08966622, 0.41241521)
			rgb=(0.28232739, 0.09495545, 0.41733086)
			rgb=(0.28265633, 0.10019576, 0.42216032)
			rgb=(0.28291049, 0.10539345, 0.42690202)
			rgb=(0.28309095, 0.11055307, 0.43155375)
			rgb=(0.28319704, 0.11567966, 0.43611482)
			rgb=(0.28322882, 0.12077701, 0.44058404)
			rgb=(0.28318684, 0.12584799, 0.44496 )
			rgb=(0.283072 , 0.13089477, 0.44924127)
			rgb=(0.28288389, 0.13592005, 0.45342734)
			rgb=(0.28262297, 0.14092556, 0.45751726)
			rgb=(0.28229037, 0.14591233, 0.46150995)
			rgb=(0.28188676, 0.15088147, 0.46540474)
			rgb=(0.28141228, 0.15583425, 0.46920128)
			rgb=(0.28086773, 0.16077132, 0.47289909)
			rgb=(0.28025468, 0.16569272, 0.47649762)
			rgb=(0.27957399, 0.17059884, 0.47999675)
			rgb=(0.27882618, 0.1754902 , 0.48339654)
			rgb=(0.27801236, 0.18036684, 0.48669702)
			rgb=(0.27713437, 0.18522836, 0.48989831)
			rgb=(0.27619376, 0.19007447, 0.49300074)
			rgb=(0.27519116, 0.1949054 , 0.49600488)
			rgb=(0.27412802, 0.19972086, 0.49891131)
			rgb=(0.27300596, 0.20452049, 0.50172076)
			rgb=(0.27182812, 0.20930306, 0.50443413)
			rgb=(0.27059473, 0.21406899, 0.50705243)
			rgb=(0.26930756, 0.21881782, 0.50957678)
			rgb=(0.26796846, 0.22354911, 0.5120084 )
			rgb=(0.26657984, 0.2282621 , 0.5143487 )
			rgb=(0.2651445 , 0.23295593, 0.5165993 )
			rgb=(0.2636632 , 0.23763078, 0.51876163)
			rgb=(0.26213801, 0.24228619, 0.52083736)
			rgb=(0.26057103, 0.2469217 , 0.52282822)
			rgb=(0.25896451, 0.25153685, 0.52473609)
			rgb=(0.25732244, 0.2561304 , 0.52656332)
			rgb=(0.25564519, 0.26070284, 0.52831152)
			rgb=(0.25393498, 0.26525384, 0.52998273)
			rgb=(0.25219404, 0.26978306, 0.53157905)
			rgb=(0.25042462, 0.27429024, 0.53310261)
			rgb=(0.24862899, 0.27877509, 0.53455561)
			rgb=(0.2468114 , 0.28323662, 0.53594093)
			rgb=(0.24497208, 0.28767547, 0.53726018)
			rgb=(0.24311324, 0.29209154, 0.53851561)
			rgb=(0.24123708, 0.29648471, 0.53970946)
			rgb=(0.23934575, 0.30085494, 0.54084398)
			rgb=(0.23744138, 0.30520222, 0.5419214 )
			rgb=(0.23552606, 0.30952657, 0.54294396)
			rgb=(0.23360277, 0.31382773, 0.54391424)
			rgb=(0.2316735 , 0.3181058 , 0.54483444)
			rgb=(0.22973926, 0.32236127, 0.54570633)
			rgb=(0.22780192, 0.32659432, 0.546532 )
			rgb=(0.2258633 , 0.33080515, 0.54731353)
			rgb=(0.22392515, 0.334994 , 0.54805291)
			rgb=(0.22198915, 0.33916114, 0.54875211)
			rgb=(0.22005691, 0.34330688, 0.54941304)
			rgb=(0.21812995, 0.34743154, 0.55003755)
			rgb=(0.21620971, 0.35153548, 0.55062743)
			rgb=(0.21429757, 0.35561907, 0.5511844 )
			rgb=(0.21239477, 0.35968273, 0.55171011)
			rgb=(0.2105031 , 0.36372671, 0.55220646)
			rgb=(0.20862342, 0.36775151, 0.55267486)
			rgb=(0.20675628, 0.37175775, 0.55311653)
			rgb=(0.20490257, 0.37574589, 0.55353282)
			rgb=(0.20306309, 0.37971644, 0.55392505)
			rgb=(0.20123854, 0.38366989, 0.55429441)
			rgb=(0.1994295 , 0.38760678, 0.55464205)
			rgb=(0.1976365 , 0.39152762, 0.55496905)
			rgb=(0.19585993, 0.39543297, 0.55527637)
			rgb=(0.19410009, 0.39932336, 0.55556494)
			rgb=(0.19235719, 0.40319934, 0.55583559)
			rgb=(0.19063135, 0.40706148, 0.55608907)
			rgb=(0.18892259, 0.41091033, 0.55632606)
			rgb=(0.18723083, 0.41474645, 0.55654717)
			rgb=(0.18555593, 0.4185704 , 0.55675292)
			rgb=(0.18389763, 0.42238275, 0.55694377)
			rgb=(0.18225561, 0.42618405, 0.5571201 )
			rgb=(0.18062949, 0.42997486, 0.55728221)
			rgb=(0.17901879, 0.43375572, 0.55743035)
			rgb=(0.17742298, 0.4375272 , 0.55756466)
			rgb=(0.17584148, 0.44128981, 0.55768526)
			rgb=(0.17427363, 0.4450441 , 0.55779216)
			rgb=(0.17271876, 0.4487906 , 0.55788532)
			rgb=(0.17117615, 0.4525298 , 0.55796464)
			rgb=(0.16964573, 0.45626209, 0.55803034)
			rgb=(0.16812641, 0.45998802, 0.55808199)
			rgb=(0.1666171 , 0.46370813, 0.55811913)
			rgb=(0.16511703, 0.4674229 , 0.55814141)
			rgb=(0.16362543, 0.47113278, 0.55814842)
			rgb=(0.16214155, 0.47483821, 0.55813967)
			rgb=(0.16066467, 0.47853961, 0.55811466)
			rgb=(0.15919413, 0.4822374 , 0.5580728 )
			rgb=(0.15772933, 0.48593197, 0.55801347)
			rgb=(0.15626973, 0.4896237 , 0.557936 )
			rgb=(0.15481488, 0.49331293, 0.55783967)
			rgb=(0.15336445, 0.49700003, 0.55772371)
			rgb=(0.1519182 , 0.50068529, 0.55758733)
			rgb=(0.15047605, 0.50436904, 0.55742968)
			rgb=(0.14903918, 0.50805136, 0.5572505 )
			rgb=(0.14760731, 0.51173263, 0.55704861)
			rgb=(0.14618026, 0.51541316, 0.55682271)
			rgb=(0.14475863, 0.51909319, 0.55657181)
			rgb=(0.14334327, 0.52277292, 0.55629491)
			rgb=(0.14193527, 0.52645254, 0.55599097)
			rgb=(0.14053599, 0.53013219, 0.55565893)
			rgb=(0.13914708, 0.53381201, 0.55529773)
			rgb=(0.13777048, 0.53749213, 0.55490625)
			rgb=(0.1364085 , 0.54117264, 0.55448339)
			rgb=(0.13506561, 0.54485335, 0.55402906)
			rgb=(0.13374299, 0.54853458, 0.55354108)
			rgb=(0.13244401, 0.55221637, 0.55301828)
			rgb=(0.13117249, 0.55589872, 0.55245948)
			rgb=(0.1299327 , 0.55958162, 0.55186354)
			rgb=(0.12872938, 0.56326503, 0.55122927)
			rgb=(0.12756771, 0.56694891, 0.55055551)
			rgb=(0.12645338, 0.57063316, 0.5498411 )
			rgb=(0.12539383, 0.57431754, 0.54908564)
			rgb=(0.12439474, 0.57800205, 0.5482874 )
			rgb=(0.12346281, 0.58168661, 0.54744498)
			rgb=(0.12260562, 0.58537105, 0.54655722)
			rgb=(0.12183122, 0.58905521, 0.54562298)
			rgb=(0.12114807, 0.59273889, 0.54464114)
			rgb=(0.12056501, 0.59642187, 0.54361058)
			rgb=(0.12009154, 0.60010387, 0.54253043)
			rgb=(0.11973756, 0.60378459, 0.54139999)
			rgb=(0.11951163, 0.60746388, 0.54021751)
			rgb=(0.11942341, 0.61114146, 0.53898192)
			rgb=(0.11948255, 0.61481702, 0.53769219)
			rgb=(0.11969858, 0.61849025, 0.53634733)
			rgb=(0.12008079, 0.62216081, 0.53494633)
			rgb=(0.12063824, 0.62582833, 0.53348834)
			rgb=(0.12137972, 0.62949242, 0.53197275)
			rgb=(0.12231244, 0.63315277, 0.53039808)
			rgb=(0.12344358, 0.63680899, 0.52876343)
			rgb=(0.12477953, 0.64046069, 0.52706792)
			rgb=(0.12632581, 0.64410744, 0.52531069)
			rgb=(0.12808703, 0.64774881, 0.52349092)
			rgb=(0.13006688, 0.65138436, 0.52160791)
			rgb=(0.13226797, 0.65501363, 0.51966086)
			rgb=(0.13469183, 0.65863619, 0.5176488 )
			rgb=(0.13733921, 0.66225157, 0.51557101)
			rgb=(0.14020991, 0.66585927, 0.5134268 )
			rgb=(0.14330291, 0.66945881, 0.51121549)
			rgb=(0.1466164 , 0.67304968, 0.50893644)
			rgb=(0.15014782, 0.67663139, 0.5065889 )
			rgb=(0.15389405, 0.68020343, 0.50417217)
			rgb=(0.15785146, 0.68376525, 0.50168574)
			rgb=(0.16201598, 0.68731632, 0.49912906)
			rgb=(0.1663832 , 0.69085611, 0.49650163)
			rgb=(0.1709484 , 0.69438405, 0.49380294)
			rgb=(0.17570671, 0.6978996 , 0.49103252)
			rgb=(0.18065314, 0.70140222, 0.48818938)
			rgb=(0.18578266, 0.70489133, 0.48527326)
			rgb=(0.19109018, 0.70836635, 0.48228395)
			rgb=(0.19657063, 0.71182668, 0.47922108)
			rgb=(0.20221902, 0.71527175, 0.47608431)
			rgb=(0.20803045, 0.71870095, 0.4728733 )
			rgb=(0.21400015, 0.72211371, 0.46958774)
			rgb=(0.22012381, 0.72550945, 0.46622638)
			rgb=(0.2263969 , 0.72888753, 0.46278934)
			rgb=(0.23281498, 0.73224735, 0.45927675)
			rgb=(0.2393739 , 0.73558828, 0.45568838)
			rgb=(0.24606968, 0.73890972, 0.45202405)
			rgb=(0.25289851, 0.74221104, 0.44828355)
			rgb=(0.25985676, 0.74549162, 0.44446673)
			rgb=(0.26694127, 0.74875084, 0.44057284)
			rgb=(0.27414922, 0.75198807, 0.4366009 )
			rgb=(0.28147681, 0.75520266, 0.43255207)
			rgb=(0.28892102, 0.75839399, 0.42842626)
			rgb=(0.29647899, 0.76156142, 0.42422341)
			rgb=(0.30414796, 0.76470433, 0.41994346)
			rgb=(0.31192534, 0.76782207, 0.41558638)
			rgb=(0.3198086 , 0.77091403, 0.41115215)
			rgb=(0.3277958 , 0.77397953, 0.40664011)
			rgb=(0.33588539, 0.7770179 , 0.40204917)
			rgb=(0.34407411, 0.78002855, 0.39738103)
			rgb=(0.35235985, 0.78301086, 0.39263579)
			rgb=(0.36074053, 0.78596419, 0.38781353)
			rgb=(0.3692142 , 0.78888793, 0.38291438)
			rgb=(0.37777892, 0.79178146, 0.3779385 )
			rgb=(0.38643282, 0.79464415, 0.37288606)
			rgb=(0.39517408, 0.79747541, 0.36775726)
			rgb=(0.40400101, 0.80027461, 0.36255223)
			rgb=(0.4129135 , 0.80304099, 0.35726893)
			rgb=(0.42190813, 0.80577412, 0.35191009)
			rgb=(0.43098317, 0.80847343, 0.34647607)
			rgb=(0.44013691, 0.81113836, 0.3409673 )
			rgb=(0.44936763, 0.81376835, 0.33538426)
			rgb=(0.45867362, 0.81636288, 0.32972749)
			rgb=(0.46805314, 0.81892143, 0.32399761)
			rgb=(0.47750446, 0.82144351, 0.31819529)
			rgb=(0.4870258 , 0.82392862, 0.31232133)
			rgb=(0.49661536, 0.82637633, 0.30637661)
			rgb=(0.5062713 , 0.82878621, 0.30036211)
			rgb=(0.51599182, 0.83115784, 0.29427888)
			rgb=(0.52577622, 0.83349064, 0.2881265 )
			rgb=(0.5356211 , 0.83578452, 0.28190832)
			rgb=(0.5455244 , 0.83803918, 0.27562602)
			rgb=(0.55548397, 0.84025437, 0.26928147)
			rgb=(0.5654976 , 0.8424299 , 0.26287683)
			rgb=(0.57556297, 0.84456561, 0.25641457)
			rgb=(0.58567772, 0.84666139, 0.24989748)
			rgb=(0.59583934, 0.84871722, 0.24332878)
			rgb=(0.60604528, 0.8507331 , 0.23671214)
			rgb=(0.61629283, 0.85270912, 0.23005179)
			rgb=(0.62657923, 0.85464543, 0.22335258)
			rgb=(0.63690157, 0.85654226, 0.21662012)
			rgb=(0.64725685, 0.85839991, 0.20986086)
			rgb=(0.65764197, 0.86021878, 0.20308229)
			rgb=(0.66805369, 0.86199932, 0.19629307)
			rgb=(0.67848868, 0.86374211, 0.18950326)
			rgb=(0.68894351, 0.86544779, 0.18272455)
			rgb=(0.69941463, 0.86711711, 0.17597055)
			rgb=(0.70989842, 0.86875092, 0.16925712)
			rgb=(0.72039115, 0.87035015, 0.16260273)
			rgb=(0.73088902, 0.87191584, 0.15602894)
			rgb=(0.74138803, 0.87344918, 0.14956101)
			rgb=(0.75188414, 0.87495143, 0.14322828)
			rgb=(0.76237342, 0.87642392, 0.13706449)
			rgb=(0.77285183, 0.87786808, 0.13110864)
			rgb=(0.78331535, 0.87928545, 0.12540538)
			rgb=(0.79375994, 0.88067763, 0.12000532)
			rgb=(0.80418159, 0.88204632, 0.11496505)
			rgb=(0.81457634, 0.88339329, 0.11034678)
			rgb=(0.82494028, 0.88472036, 0.10621724)
			rgb=(0.83526959, 0.88602943, 0.1026459 )
			rgb=(0.84556056, 0.88732243, 0.09970219)
			rgb=(0.8558096 , 0.88860134, 0.09745186)
			rgb=(0.86601325, 0.88986815, 0.09595277)
			rgb=(0.87616824, 0.89112487, 0.09525046)
			rgb=(0.88627146, 0.89237353, 0.09537439)
			rgb=(0.89632002, 0.89361614, 0.09633538)
			rgb=(0.90631121, 0.89485467, 0.09812496)
			rgb=(0.91624212, 0.89609127, 0.1007168 )
			rgb=(0.92610579, 0.89732977, 0.10407067)
			rgb=(0.93590444, 0.8985704 , 0.10813094)
			rgb=(0.94563626, 0.899815 , 0.11283773)
			rgb=(0.95529972, 0.90106534, 0.11812832)
			rgb=(0.96489353, 0.90232311, 0.12394051)
			rgb=(0.97441665, 0.90358991, 0.13021494)
			rgb=(0.98386829, 0.90486726, 0.13689671)
			rgb=(0.99324789, 0.90615657, 0.1439362 )
		}
}

\pgfplotsset{colormap={viridisSoft}{
			rgb255=(242, 242, 242);
rgb=(0.28026,0.1657,0.4765);
			rgb=(0.26366,0.23763,0.51877);
			rgb=(0.23744,0.3052,0.54192);
			rgb=(0.20862,0.36775,0.55267);
			rgb=(0.18225,0.42618,0.55711);
			rgb=(0.1592,0.48224,0.55807);
			rgb=(0.13777,0.53749,0.5549);
			rgb=(0.12115,0.59274,0.54465);
			rgb=(0.12808,0.64775,0.5235);
			rgb=(0.18065,0.7014,0.48819);
			rgb=(0.27415,0.75198,0.4366);
			rgb=(0.39517,0.79747,0.36775);
			rgb=(0.53561,0.83578,0.2819);
			rgb=(0.68895,0.86545,0.18272);
			rgb=(0.84557,0.88733,0.0997);
			rgb=(0.99324,0.90616,0.14394)
		}
}

\pgfplotsset{colormap={cellRed}{
			rgb255=(242.0,242.0,242.0);
			rgb255=(241.63157894736844,234.47368421052633,234.47368421052633);
			rgb255=(241.26315789473685,226.94736842105266,226.94736842105266);
			rgb255=(240.89473684210526,219.42105263157893,219.42105263157893);
			rgb255=(240.5263157894737,211.89473684210526,211.89473684210526);
			rgb255=(240.1578947368421,204.3684210526316,204.3684210526316);
			rgb255=(239.78947368421052,196.84210526315792,196.84210526315792);
			rgb255=(239.42105263157896,189.31578947368422,189.31578947368422);
			rgb255=(239.05263157894737,181.78947368421052,181.78947368421052);
			rgb255=(238.6842105263158,174.26315789473688,174.26315789473688);
			rgb255=(238.31578947368422,166.73684210526315,166.73684210526315);
			rgb255=(237.94736842105263,159.21052631578948,159.21052631578948);
			rgb255=(237.57894736842104,151.68421052631578,151.68421052631578);
			rgb255=(237.21052631578948,144.1578947368421,144.1578947368421);
			rgb255=(236.84210526315792,136.63157894736844,136.63157894736844);
			rgb255=(236.47368421052633,129.10526315789474,129.10526315789474);
			rgb255=(236.10526315789474,121.57894736842107,121.57894736842107);
			rgb255=(235.73684210526318,114.05263157894737,114.05263157894737);
			rgb255=(235.3684210526316,106.52631578947368,106.52631578947368);
			rgb255=(235.0,99.0,99.0);
		}
}

\pgfplotsset{colormap={cellGreen}{
			rgb255=(242.0,242.0,242.0);
			rgb255=(236.21052631578948,239.5263157894737,234.26315789473685);
			rgb255=(230.42105263157896,237.05263157894737,226.5263157894737);
			rgb255=(224.6315789473684,234.57894736842104,218.78947368421052);
			rgb255=(218.8421052631579,232.10526315789474,211.05263157894737);
			rgb255=(213.05263157894737,229.63157894736844,203.31578947368422);
			rgb255=(207.26315789473685,227.1578947368421,195.57894736842107);
			rgb255=(201.4736842105263,224.68421052631578,187.8421052631579);
			rgb255=(195.68421052631578,222.21052631578948,180.10526315789474);
			rgb255=(189.8947368421053,219.73684210526318,172.36842105263162);
			rgb255=(184.10526315789474,217.26315789473682,164.63157894736844);
			rgb255=(178.31578947368422,214.78947368421052,156.89473684210526);
			rgb255=(172.5263157894737,212.31578947368422,149.1578947368421);
			rgb255=(166.73684210526318,209.84210526315792,141.42105263157896);
			rgb255=(160.94736842105263,207.3684210526316,133.6842105263158);
			rgb255=(155.1578947368421,204.89473684210526,125.94736842105263);
			rgb255=(149.3684210526316,202.42105263157893,118.21052631578948);
			rgb255=(143.57894736842104,199.94736842105266,110.47368421052632);
			rgb255=(137.78947368421052,197.47368421052633,102.73684210526316);
			rgb255=(132.0,195.0,95.0);
		}
}

\pgfplotsset{colormap={cellRedSquared}{
			rgb255=(242.0,242.0,242.0);
			rgb255=(241.28254847645428,227.34349030470915,227.34349030470915);
			rgb255=(240.60387811634348,213.47922437673128,213.47922437673128);
			rgb255=(239.9639889196676,200.40720221606648,200.40720221606648);
			rgb255=(239.36288088642658,188.1274238227147,188.1274238227147);
			rgb255=(238.8005540166205,176.63988919667594,176.63988919667594);
			rgb255=(238.2770083102493,165.94459833795014,165.94459833795014);
			rgb255=(237.79224376731304,156.04155124653738,156.04155124653738);
			rgb255=(237.34626038781164,146.93074792243766,146.93074792243766);
			rgb255=(236.93905817174516,138.61218836565098,138.61218836565098);
			rgb255=(236.57063711911357,131.0858725761773,131.0858725761773);
			rgb255=(236.2409972299169,124.35180055401662,124.35180055401662);
			rgb255=(235.95013850415512,118.40997229916897,118.40997229916897);
			rgb255=(235.69806094182823,113.26038781163435,113.26038781163435);
			rgb255=(235.4847645429363,108.90304709141274,108.90304709141274);
			rgb255=(235.3102493074792,105.33795013850416,105.33795013850416);
			rgb255=(235.17451523545705,102.56509695290858,102.56509695290858);
			rgb255=(235.0775623268698,100.58448753462605,100.58448753462605);
			rgb255=(235.01939058171746,99.3961218836565,99.3961218836565);
			rgb255=(235.0,99.0,99.0);
		}
}
\pgfplotsset{colormap={cellGreenSquared}{
			rgb255=(242.0,242.0,242.0);
			rgb255=(230.7257617728532,237.18282548476455,226.93351800554018);
			rgb255=(220.06094182825484,232.62603878116343,212.6814404432133);
			rgb255=(210.00554016620498,228.32963988919667,199.2437673130194);
			rgb255=(200.5595567867036,224.29362880886427,186.62049861495845);
			rgb255=(191.7229916897507,220.5180055401662,174.8116343490305);
			rgb255=(183.49584487534625,217.0027700831025,163.81717451523545);
			rgb255=(175.87811634349032,213.74792243767314,153.63711911357342);
			rgb255=(168.86980609418282,210.75346260387812,144.27146814404432);
			rgb255=(162.47091412742384,208.01939058171746,135.72022160664818);
			rgb255=(156.68144044321332,205.54570637119116,127.98337950138506);
			rgb255=(151.50138504155126,203.33240997229916,121.06094182825484);
			rgb255=(146.9307479224377,201.37950138504155,114.95290858725764);
			rgb255=(142.96952908587255,199.68698060941827,109.65927977839334);
			rgb255=(139.61772853185596,198.25484764542935,105.18005540166205);
			rgb255=(136.8753462603878,197.0831024930748,101.51523545706371);
			rgb255=(134.74238227146813,196.17174515235456,98.66481994459834);
			rgb255=(133.21883656509695,195.5207756232687,96.62880886426592);
			rgb255=(132.30470914127426,195.13019390581718,95.40720221606648);
			rgb255=(132.0,195.0,95.0);
		}
} 
\usepgfplotslibrary{patchplots}

\pgfplotsset{every axis/.append style={
			grid=both,
			grid style={white, line width=.1pt},
			major grid style={white, line width=1.5pt},
			axis background/.style={fill=gray!10},
			axis line style={draw=none},
			tick style={draw=none},
			xlabel = $x$,
line width=1pt,
legend style={
					line width = 1pt,
					draw=none,
					/tikz/every even column/.append style={column sep=0.5cm}
				},
		}}

\usepgfplotslibrary{groupplots}

\usepackage{calc}

\definecolor{gg0}{HTML}{E24A33}
\definecolor{gg1}{HTML}{348ABD}
\definecolor{gg2}{HTML}{988ED5}
\definecolor{gg3}{HTML}{777777}
\definecolor{gg4}{HTML}{FBC15E}
\definecolor{gg5}{HTML}{8EBA42}
\definecolor{gg6}{HTML}{FFB5B8}

\pgfplotsset{
	/pgfplots/colormap={bright}{rgb255=(0,0,0) rgb255=(78,3,100) rgb255=(2,74,255)
			rgb255=(255,21,181) rgb255=(255,113,26) rgb255=(147,213,114) rgb255=(230,255,0)
			rgb255=(255,255,255)}
}

\newcommand{\addappendix}{\section*{\appendixname}\addcontentsline{toc}{section}{\appendixname}\counterwithin*{figure}{section}\stepcounter{section}\renewcommand{\thesection}{A}\renewcommand{\thefigure}{\thesection.\arabic{figure}}}

\definecolor{brandeisblue}{rgb}{0.0, 0.44, 1.0}
\definecolor{lincolngreen}{rgb}{0.11, 0.35, 0.02}
\definecolor{indiagreen}{rgb}{0.07, 0.53, 0.03}
\definecolor{venetianred}{rgb}{0.78, 0.03, 0.08}
\definecolor{darkorange}{rgb}{1.0, 0.55, 0.0}
\definecolor{burntorange}{rgb}{0.8, 0.33, 0.0}
\definecolor{flame}{rgb}{0.89, 0.35, 0.13}
\definecolor{non-photoblue}{rgb}{0.64, 0.87, 0.93}

\usepackage{tikz}
\usepackage{tkz-base}
\usetikzlibrary{calc}
\usepackage{tkz-euclide}
\usepackage{xcolor}

\renewcommand{\review}[2]{}
\renewcommand{\creview}[3]{}
\renewcommand{\ntcreview}[3]{}

\renewcommand{\tableofcontents}{}
\renewcommand{\listofreviews}{}

\definecolor{revisionColourOne}{RGB}{180,0,0}
\definecolor{revisionColourTwo}{RGB}{0,0,180}

\usepackage{fullpage}

\usepackage{setspace}
\begin{document}
\begin{singlespace}\maketitle\end{singlespace}
\begin{abstract}
We study a nonlocal diffusion equation of porous medium type featuring a generalised fractional pressure with spatial anisotropy.
We construct a finite element method for the numerical solution of the equation on a bounded open Lipschitz polytopal domain $\Omega \subset \R^{d}$, where $d = 2$ or $3$. The pressure in the model is defined as the solution of fractional elliptic problem involving the fractional power of a second order differential operator, in terms of its spectral definition.
Under suitable assumptions on the fractional order and the coefficients of the operator, we rigorously prove convergence of the numerical scheme. The analysis is carried out in two stages: first passing to the limit in the spatial discretization, and then in the time step, ultimately showing that a subsequence of the sequence of finite element approximations defined by the proposed numerical method converges to a bounded and nonnegative weak solution of the initial-boundary-value problem under consideration. 
Finally, we present numerical experiments in two dimensions illustrating the computational aspects of the method and highlighting the interplay between nonlocal effects and spatial anisotropy under different configurations. We also show numerically the failure of the comparison principle and exponential decay of the numerical solution to a steady state.

\end{abstract}
 \subjectclassification{\subjectPDF 35K55; 35R11; 65N30.}
\keywords{\keywordsPDF fractional operators; porous medium equation; anisotropy; finite element method}

\section{Introduction}\label{Sect:1}

The aim of this work is to present a convergent finite element scheme for a nonlocal diffusion equation of porous medium type with a generalised fractional pressure, capable of accounting for spatial anisotropy. We present a rigorous proof of convergence for the resulting finite element approximations and we support the analysis with numerical simulations of different configurations in two dimensions. 

Let us denote by $\rho$ a certain density distribution, function of space and time, $\rho = \rho(x, t)$.
The problem that we will treat has the form 
    \begin{equation} \label{ExtendedProb intro}
\left \{
\begin{aligned}
&\frac{\partial \rho}{\partial t} = \Delta \rho - \nabla \cdot (\rho A(x)\nabla c), \\
& - \LL^{\s} c = \rho,
\end{aligned}
\right.
\end{equation}
where $0<s<1$ is the \textit{fractional order} and $\LL$ is a second order elliptic operator of the form $\LL u = \textrm{div}(A(x) \nabla u) + Q(x) u$ for $A(x) \in \mathbb{R}^{d \times d}$ and $Q(x) \in \mathbb{R}$. The problem \eqref{ExtendedProb intro} is a parabolic regularization of the porous medium equation
    \begin{equation} \label{ExtendedProb intro noParab Regularization}
\frac{\partial \rho}{\partial t} = - \nabla \cdot (\rho A(x)\nabla c), \qquad - \LL^{\s} c = \rho,
\end{equation}
The model is formulated in order to generalise and introduce spatial anisotropy to the porous medium equation with a fractional pressure 
\begin{equation} \label{FracPor FracLap}
\frac{\partial \rho}{\partial t} = - \nabla \cdot (\rho \nabla c), \qquad - (-\Delta)^{\s} c = \rho,
\end{equation}
where the pressure is given by the inverse fractional Laplacian of the density. 

In \cite{caffarelli2010nonlinear,caffarelli2013regularity,chen2022analysis}, under suitable assumptions, the authors proved there exists a weak solution to \eqref{FracPor FracLap} on the whole of $\mathbb{R}^{d}$. For a parabolic regularization of \eqref{FracPor FracLap} the existence of a smooth solution on $\mathbb{R}^{d}$ has also been proved, cf. \cite{choi2021classical},  and the asymptotic behaviour of such model has been studied in \cite{caffarelli2010asymptotic, carrillo2015exponential}.
We refer to \cite{Vazquez2017} for a broader discussion on diffusion models of this type. 

Usually nonlocal models involving the fractional Laplacian are treated on the whole space $\mathbb{R}^{d}$, as the previously cited references do. From a numerical perspective we often need to confine the values of the density to a bounded domain; this necessity comes with several challenges, the first of which is the use of a proper definition of a nonlocal operator on $\mathbb{R}^d$. A finite element scheme for a parabolic regularization of problem \eqref{FracPor FracLap} on a bounded domain, with no-flux boundary conditions, has been recently presented in \cite{Fronzoni2025}, using the spectral definition of the fractional Laplacian. Our aim is to apply a similar strategy for proving convergence of a finite element scheme to the anisotropic model \eqref{ExtendedProb intro}.

In \cite{Li2018} an anisotropic porous medium equation (case $s=0$) is developed from Darcy’s law in anisotropic porous media, where the matrix $A(x)$ is the permeability matrix, representing heterogeneous media; finite element methods were used to compute numerical solutions. Existence of weak solutions for an anisotropic porous medium equation has been proved in \cite{Zhi2022}.
Anisotropic models of chemotaxis have also been studied through the years and we cite \cite{Liu2025} as a recent example, where matrix-valued sensitivities are introduced in a Keller-Segel system. 

Our equation \eqref{ExtendedProb intro} extends these models to the case of a general fractional operator that features also a potential term $Q(x)$ that can be used for confinement of the density. The heterogeneity in the media interplays with the presence of nonlocality, provided by the fractional order operator $\LL$.

As we mentioned, our future analysis will be made on a bounded domain $\Omega$. In doing so, we shall confine ourselves to the physically relevant cases of $d=2$ and $d=3$ space dimensions. We will pair equation \eqref{ExtendedProb intro} and the operator $\LL$ with some boundary conditions on the boundary $\partial \Omega$.

We now make a standard set of assumptions for the operator $\LL$: having  
\begin{equation} \label{mytypeofoperator}
    \LL u = - \text{div}(A(x) \nabla u) + Q(x) u,
\end{equation}
we assume that  $Q:\Omega \to \mathbb{R}$ is a nonnegative and bounded function and $A(x) \in \mathbb{R}^{d \times d}$  is a symmetric matrix with coefficients $A_{ij}(x) = A_{ji}(x)$ bounded, measurable and satisfying uniform ellipticity, i.e. $\Lambda_1 |\vrm|^2 \leq A(x) \vrm \cdot \vrm \leq \Lambda_2 |\vrm|^2$, for all $\vrm \in \mathbb{R}^d$ and almost every $x \in \Omega$, for some constants $0 < \Lambda_1 \leq \Lambda_2$.

Our goal is to formulate a fully discrete numerical scheme based on spatial finite element discretization and implicit Euler time-stepping, similar to the one presented in \cite{Fronzoni2025} for the standard porous medium equation with a fractional pressure (case $\LL=-\Delta$). Under some restrictive hypotheses on the fractional order $s$ and with some care for the general form of the operator $\LL$ we will show that the numerical scheme can reproduce desirable properties of the model also in this case. Our analysis will take advantage of the spectral definition that we will adopt for the fractional power of the operator and we will benefit from the use of its representation in terms of the heat kernel associated to the operator.
After proving the convergence of the scheme, we will present several numerical experiments, showcasing the versatility of this nonlocal model and its capacity to incorporate nonlocality with spatial heterogeneity. 

The analysis of convergence of the scheme will be made in two consecutive steps: we will first pass to the limit in the spacial discretization parameter and then, 
at the temporally semidiscrete-level, we  shall pass to the limit with the time step, while keeping the essential estimates intact in the limit, to establish the existence of a weak solution to our problem. The reasons why the two limits are not taken simultaneously will be clear in the following sections. 

The paper is structured as follows. In Section \ref{sec:2} we first introduce the relevant functional spaces and the spectral definition of the fractional operator $\LL^s$ and we then formulate the weak form of the initial-boundary-value problem. In Section \ref{FEM:section} we present the fully discrete scheme for the equation and we prove convergence for the spatial discretization parameter tends to zero. In Section \ref{sec:4} we take the limit as the time step vanishes. Finally in Section \ref{sec:5} we comment on the scientific computing aspects of the method and we present several simulations, for different configurations of $A(x)$ and $Q(x)$, commenting on the interplay between nonlocality and spatial anisotropy.

\section{Preliminaries} \label{sec:2}

We specialize our problem \eqref{ExtendedProb intro}, endowing it with appropriate initial and boundary conditions and we provide the definition that we will use for the fractional power of the operator $\LL$. Before doing this, we present some of the properties that hold for a differential operator $\LL$ with our set of hypotheses. These features will be essential in the subsequent analysis, as well as in the spectral definition of $\LL^s$. 

As we said in the introduction, we will be working on a bounded domain and we will set a Neumann boundary condition for the density $\rho$; this will be accompanied by a boundary condition related to the differential operator $\LL$.  
 We denote by $n$ the outward unit normal to the boundary of the domain $\partial \Omega$ and we denote by $\partial_n$ the (weak) directional derivative with respect to $n$. Then, let us denote by $\partial_{n_A}$ the directional derivative $\partial_{n_A} u = A(x) \nabla u \cdot n$, let $\partial_{n_A}(\cdot) =0$ be a homogeneous Neumann boundary condition  and let us denote by $\LN$ the differential operator $\LL$ paired with it.
If we define the bilinear form 
\begin{equation} \label{Bilinear From Operator LN}
    a_{\LN}(w, v) \coloneqq \int_\Omega A(x) \nabla w \cdot \nabla v \dx + \int_\Omega Q(x) wv \dx,
\end{equation}
the pair $(\mathrm{Dom}(\LN), \LN)$ is defined
as follows: 
\begin{align*}
 \mathrm{Dom}(\LN) \coloneqq \bigg\{ u \in &H^1(\Omega) \,:\, H^1(\Omega)\ni \,v \mapsto \! a_{\LN}(u, v) \\
 &\mbox{is a continuous linear funcional on } L^2(\Omega)\bigg \},\\
 \int_\Omega (\LN u)\, v \dx &\coloneqq a_{\LN}(u, v), \quad u \in \mathrm{Dom}(\LN),\, \, v \in H^{1}(\Omega).
\end{align*}

Let $\{ \ee^{-t \LN} \}_{t\geq 0}$ be the heat semigroup associated to $\LN$ and $W_t(x,y)$ the corresponding heat kernel: $\ee^{-t\LN} u(x)$ solves the generalised heat equation
\begin{equation} \label{GeneralHE}
\left \{
\begin{aligned}
&\frac{\partial w}{\partial t} = \LL w && \textrm{in } \Omega \times (0, \infty),\\
 &w(x,0) = u(x) && \textrm{in } \Omega, \\
 &\partial_{n_A}w = 0 && \textrm{on } \partial\Omega \times (0, \infty).
\end{aligned}
\right.
\end{equation}

For an operator of the form \eqref{mytypeofoperator}, under our assumptions on $A(x)$ and $Q(x)$, the following Gaussian upper bound for the heat kernel holds:  
\begin{equation} \label{Gaussianupperbound}
    W_t(x,y) \leq  c_1 \frac{\ee^{-|x-y|^2/(c_2 t)}}{t^{\frac{d}{2}}}, \quad x, y \in \Omega, \quad t\in(0, \infty),
\end{equation}
for some positive constants $c_1, c_2$. The reasons for which \eqref{Gaussianupperbound} holds are discussed in \cite[Chapter 7]{ElMaati2005}: \eqref{Gaussianupperbound} is true for the operator $-\text{div}(A(x) \nabla u)$ (see \cite[Theorem 7]{Lumer1998}) and then for $\LN$ by \cite[Theorem 2.24]{ElMaati2005}. The upper bound \eqref{Gaussianupperbound} on the heat kernel immediately implies that the operator $\LN$ is $L^{\infty}$-\textit{contractive}, meaning that  
\begin{equation} \label{Linfty contractivity}
    \| e^{-t \LN} f \|_{L^\infty(\Omega)} \leq \| f\|_{L^\infty(\Omega)}, \quad \textrm{ for every} \quad f \in L^{\infty}(\Omega).
\end{equation}
We also notice that 
\begin{equation*}
    \int_\Omega W_t(x,y) \dy = \ee^{-t \LN}1(x),
\end{equation*}
where $1(x)$ stands for the function that is identically equal to 1 for $x \in \Omega$, and hence the heat kernel has in general mass smaller than or equal to 1, i.e.
\begin{equation} \label{mass heat kernel}
     \int_\Omega W_t(x,y) \dy \leq 1.
\end{equation}

\begin{remark}[Absence of potential term and Neumann boundary conditions]
We mention that if the lower order term in \eqref{mytypeofoperator} vanishes, that is $Q \equiv 0$ on $\Omega$, then one has at first that the heat semigroup satisfies the conservation property, that is
\begin{equation*}
    \ee^{-t \LN} 1 = 1, \quad \textrm{for all } t \geq 0,  
\end{equation*}
which implies that the kernel has unit mass, 
\begin{equation} \label{unit mass kernel}
    \int_\Omega W_t(x,y) \dy = 1, \quad \textrm{for all } x \in \Omega
\end{equation}
(the proof of this fact is due to \cite[Theorem 4.17]{ElMaati2005} and the fact that we have $L^\infty$-contractivity). 
If the operator $\LL u = - \textrm{div}(A(x) \nabla u)$ is paired with a Neumann boundary condition, the heat kernel has also a Gaussian lower bound
\begin{equation} \label{upper and lower Gaussian estimate}
{c}_{1} \frac{\mathrm{e}^{-|x-y|^{2}/(c_{3}t)}}{t^{\frac{d}{2}}} \leq W_{t}(x, y) \leq c_{2} \frac{{\mathrm e}^{-|x-y|^{2}/(c_{4}t)}}{t^{\frac{d}{2}}}, \quad x, y \in \Omega, \quad t > 0, 
\end{equation}
for positive constants $c_1, c_2, c_3, c_4$ (see \cite[Section 7]{caffarelli2016fractional} and references therein). 
\end{remark}

We remark that, under the assumptions that we set for the operator, $\LN$ is nonnegative and self-adjoint and there exists an orthonormal basis of $L^2(\Omega)$ consisting of eigenfunctions $\varphi_k$, $k\geq 1$, of $\LN$, that correspond to eigenvalues $0 \leq \lambda_0 < \lambda_1 < \lambda_2 < \cdots \nearrow \infty $, namely 
\begin{equation}\label{eigen}
    \left \{ \begin{aligned}
        & \LL \psi_k = \lambda_k \varphi_k &&\quad \textrm{in } \Omega,  \\
        &A(x) \nabla \varphi_k \cdot n = 0 &&\quad\textrm{on } \partial \Omega. 
    \end{aligned} \right.
\end{equation}

We are ready to present the spectral definition of the operator $\LN^s$ for $0<s<1$ on the bounded domain $\Omega$. We define
\begin{equation} \label{FracOpeigen}
    \LN^s u \coloneqq \sum_{k=0}^\infty \lambda_k^s u_k \varphi_k, \quad u_k \coloneqq \int_\Omega u(x) \varphi_k(x) \dx.
\end{equation}

For the definition \eqref{FracOpeigen} of the operator $\LN^s$ it holds true an additional representation formula, in terms of the heat semigroup $\{ e^{-t \LN} \}_{t\geq 0}$: 
\begin{equation} \label{semigroupformula}
    \LN^s u(x) = \frac{1}{\Gamma(-s)} \int_0^\infty (\ee^{-t \LN}u(x) - u(x)) \frac{1}{t^{1+s}} \dt;
\end{equation}
this formula will be useful in our analysis. 

Let us use the notation $\rho^{\ast}$ for the projection of $\rho$ into the space of functions with zero integral average, namely 
\begin{equation} \label{ProjZeroAv} \rho^{\ast} := \rho - \overline{\rho}, \quad \text{with } \overline{\rho} := \frac{1}{|\Omega|} \int_{\Omega} \rho \dx. \end{equation}
Let us denote by $L^2_\ast(\Omega)$ the linear subspace of $L^2(\Omega)$with zero integral average, namely 
\begin{equation*}
    L^{2}_{\ast}(\Omega) \coloneqq \left\{ v \in L^{2}(\Omega) \text{ such that } \int_{\Omega}v \dx = 0 \right\}.
\end{equation*}

We notice now that if $\LN u = 0$, then one has 
\begin{equation*}
   0 =  \int_\Omega A(x) \nabla u \cdot \nabla u \dx + \int_\Omega Q(x) u^2 \geq \Lambda_1 |\nabla u|^2,
\end{equation*}
which implies that $\nabla u = 0$ a.e. in $\Omega$, meaning that, since $\Omega$ is a domain, if  $\LN u = 0$ then $u$ is constant a.e. in $\Omega$. However, clearly if $u$ is constant a.e. in $\Omega$ then $\LN u = 0$ if and only if $\int_\Omega Q(x) \dx = 0$, which implies, by nonnegativity of $Q$, that $Q(x) = 0$ a.e. in $\Omega$.
In summary, we have two cases:
\begin{enumerate}[label=(K\arabic*), start=0]
\item there exists a subset of $\Omega$ with positive measure such that $Q(x) \neq 0 $ on that subset, and thus $\textrm{Ker}(\LN)$ is trivial; \label{(K0)}
    \item  $Q(x) = 0$ a.e. in $\Omega$, and thus $\textrm{Ker}(\LN)$ is the set of constant functions. \label{(K1)}
\end{enumerate}

Let us denote by $L^2_\diamond(\Omega)$ the space $L^2(\Omega)$ quotiented with the kernel of the operator $\LN$, namely
\begin{equation*}
    L^2_\diamond(\Omega) = L^2(\Omega)/ \textrm{Ker}(\LN);
\end{equation*}
the space $L^2_\diamond(\Omega)$ is nothing but $L^2_\ast(\Omega)$ in case \ref{(K1)} and $L^2(\Omega)$ itself in case \ref{(K0)}.
For the following we also define the projection $\Pi_\diamond: L^2(\Omega) \to L^2_\diamond(\Omega)$ as 
\begin{equation} \label{general projection}
    v^\diamond =\Pi_\diamond v \coloneqq \left\{
    \begin{array}{ll} 
    \sum_{k=0}^{\infty} v_k \varphi_k & \textrm{if \ref{(K0)}},  \\ \\
    \sum_{k=1}^{\infty} v_k \varphi_k & \textrm{if \ref{(K1)}} . \end{array} \right.
\end{equation}
Clearly $\| \Pi_\diamond v \|_{L^2(\Omega)} \leq \| v\|_{L^2(\Omega)}$. Notice that $ \Pi_\diamond v = v^\ast$  in case \ref{(K1)} and $ \Pi_\diamond v = v$ in case \ref{(K0)}. 

Let us also denote by $H^1_\diamond(\Omega)$ the set of $H^1(\Omega)$ functions that belong to $L^2_\diamond(\Omega)$, i.e. $H^1_\diamond \coloneqq  H^1(\Omega) \cap L^2_\diamond(\Omega) $

In order to ensure existence and uniqueness of solutions to elliptic problems of the type $\LN^s u = f$, we define $\mathcal{H}^s_{\diamond}(\Omega) \coloneqq \textup{Dom}(\LN^s)$ as the Hilbert space of functions, quotiented with the kernel of $\LN$ such that \eqref{FracOpeigen} belongs to $L^2_\diamond(\Omega)$, namely 
\begin{equation} \label{general Hs space}
    \Hsd(\Omega) = \left\{ u(\cdot) = \sum_{k=1}^{\infty} u_{k} \varphi_{k}(\cdot) \in L^{2}_\diamond(\Omega): \|u\|^{2}_{\Hsd(\Omega)} \coloneqq \sum_{k=0}^{\infty} \lambda_{k}^{s} u_{k}^{2} < \infty \right\}. 
\end{equation}
One has that 
\begin{equation*}
    \| u \|_{\Hsd(\Omega)} = \| \LN^{s/2} u \|_{L^2(\Omega)} \quad \forall u \in \Hsd(\Omega)
\end{equation*}
and we notice that for $s=1$ 
\begin{align*}
    \| u \|_{\mathcal{H}^1_\diamond(\Omega)}^2 = (\LL u, u) = \int_\Omega A(x) \nabla u \cdot \nabla u \dx + \int_\Omega Q(x) u^2 \dx
\end{align*}
and by uniform ellipticity and the fact that $Q$ is nonnegative and bounded one has the equivalence between $\| \cdot \|_{\mathcal{H}_\diamond^1(\Omega)}$ and $|\cdot |_{H^1(\Omega)}$ on the space $\Hsd(\Omega)$. This also implies that if $v\in H^1(\Omega)$ then $\Pi_\diamond v \in H^1(\Omega) \cap L^2_\diamond(\Omega)$ by the definition \eqref{general projection}. We denote by $\mathcal{H}^{-s}_\diamond(\Omega)$ the dual space of $\Hsd(\Omega)$.

In order to proceed further we introduce an assumption on the elliptic regularity of the operator $\LN$. We shall assume from now on that for the operator $\LN$ elliptic regularity of order $\alpha \in (0,1]$ holds. Specifically, if $T_{\LN}: \mathcal{H}^{-1}_\diamond(\Omega) \to \mathcal{H}^1_\diamond(\Omega)$ is the operator defined for $F \in \mathcal{H}^{-1}_\diamond(\Omega)$ as the solution to
\begin{equation*}
a_{\LN}(T_{\LN}F, v) = (F, v), \quad \textrm{for all } v \in \mathcal{H}^1_\diamond(\Omega),
\end{equation*}
(which exists and it is unique by the Lax-Milgram theorem and the assumptions of uniform ellipticity and boundedness that we have already set for $\LL$), we assume that, for $r \in (0, \alpha]$, $T_{\LN}$ is a bounded map of $\mathcal{H}^{-1+r}(\Omega)$ into $\mathcal{H}^{1+r}(\Omega)$. Elliptic regularity will be required to obtain convergence rates for the finite element approximation of fractional powers of the general operator $\LN$. We remark that, for example, elliptic regularity of order $\alpha=1$ will hold if $\LN$ has sufficiently smooth coefficients and if $\Omega$ is convex, or if $\LN$ has sufficiently smooth coefficients and the boundary of the domain is $C^{1,1}$; however the latter is not the case if we work, as we will, with polygonal and polyhedral domains.

We are now ready to specialize the problem \eqref{ExtendedProb intro} and pair it with some boundary conditions and an initial condition. The problem that we consider is the following: 
\begin{equation} \label{general MainFullProb}
\left \{
\begin{aligned}
&\frac{\partial \rho}{\partial t} = \Delta  \rho - \nabla \cdot (\rho A(x) \nabla c) & \textrm{in } & \Omega \times (0, \infty) , \\
& - \LN^{\s} c =  \rho^\diamond & \textrm{in } & \Omega \times (0, \infty), \\
& \partial_n \rho = 0,  \partial_{n_A} c = 0 & \textrm{on~\!} & \partial \Omega \times (0, \infty), \\
&\rho(x,0) = \rho_0(x)\quad \mbox{for all $x \in \Omega$}, &
\end{aligned}
\right.
\end{equation}
where $\rho_0 \in L^\infty(\Omega)$ is a given nonnegative initial datum, with normalized mass $\frac{1}{|\Omega|} \int_{\Omega} \rho_{0} \dx = 1$, $\rho_\diamond$ is defined through \eqref{general projection}.
 We restrict the range of the fractional order $s$ to $(1/2, 1)$; this assumption will be crucial for the convergence of the whole finite element approximation for this generalised evolutionary problem.

We now express the problem \eqref{general MainFullProb} in its weak formulation. Let $V\coloneqq H^{1}(\Omega)$; the weak formulation of the initial-boundary-value problem \eqref{general MainFullProb} is thus as follows:
\begin{subequations} \label{general BasicWeakFormMain}
\begin{gather}
\text{Find } \rho \in L^2(0,T;V)  \text{ with $\frac{\partial \rho}{\partial t} \in L^\infty(0,T;V')$ such that} \nonumber \\
\Big\langle \frac{\partial \rho}{\partial t}, \phi \Big\rangle = - \int_{\Omega} \nabla \rho \cdot \nabla \phi \dx + \int_{\Omega} \rho  A(x) \nabla c \cdot \nabla \phi \dx \quad \text{for all } \phi \in V \text{ and a.e. $t \in (0,T]$}, \label{general BasicWeakForm}
\end{gather}
subject to the initial condition $\rho(x, 0) = \rho_{0}(x)$, where $\rho_{0} \in L^{\infty}(\Omega)$ and $\rho_{0}(x) \geq 0$ for a.e. $x \in \Omega$ and where
\begin{equation} \label{general MyFracPois1}
    -\LN^{\s} c = \rho^\diamond \textrm{ in } \Omega, \quad \partial_{n_A} c = 0  \textrm{ on~\!}  \partial \Omega, \quad s\in(1/2, 1). 
\end{equation}
\end{subequations}
 Notice that $c$ is well defined, as the solution to the fractional equation (\ref{general MyFracPois1}) exists and it is unique by Theorem \ref{general FracPoiExUniq}; we will therefore write $c = -\LN^{-\s}\rho$. When we speak of a weak solution to the problem (\ref{general BasicWeakFormMain}) we mean a couple $\rho$ and $c$ such that (\ref{general BasicWeakForm}) and (\ref{general MyFracPois1}) hold.

In order to build our numerical scheme for equation \eqref{general MainFullProb}, we introduce for $0<\delta < 1< L < \infty$ the positive cut-off functions $\beta_{\delta}^{L}, \beta^L \in C^{0,1}(\mathbb{R})$
\begin{equation*} 
\beta_{\delta}^{L}(s) := \left \{ \begin{array}{ll} \delta &  \mbox{if } s \leq \delta, \\ s &  \mbox{if } \delta < s < L, \\ L & \mbox{if }  s \geq L, \end{array} \right.  \qquad \beta^{L}(s) := \left \{ \begin{array}{ll} s &  \mbox{if } \delta < s < L, \\ L & \mbox{if }  s \geq L, \end{array} \right.
\end{equation*}
prior to discretizing the problem. 

We therefore introduce the following `truncated' weak formulation:
\begin{subequations} \label{RegWeakFormMain}
\begin{gather}
\text{Find } \rhodL  \in L^2(0,T;V) \text{ with $\frac{\partial \rho_{\delta,L}}{\partial t} \in L^2(0,T;V')$ such that } \nonumber \\
    \Big\langle \frac{ \partial \rhodL}{\partial t}, \phi  \Big\rangle = -\int_{\Omega} \nabla \rhodL \cdot \nabla \phi  \dx + \int_{\Omega} \beta_{\delta}^{L}(\rhodL) A(x)  \nabla c_{\delta, L} \cdot \nabla \phi  \dx \quad \textrm{for all } \phi \in V \textrm{ and a.e. $t \in (0,T]$}, \label{RegWeakForm}
\end{gather}
subject to the initial condition $\rho_{\delta, L}(\cdot, 0) = \rho_{0}$, where 
\begin{equation}\label{RegWeakFormb}
 -\LN^{\s} c = (\beta^L(\rho))^\diamond \textrm{ in } \Omega, \quad s\in(1/2, 1) 
\end{equation}
\end{subequations}
and, as before, $V = H^1(\Omega)$. 

Let us also define the nonnegative convex function $G \in C([0,\infty))$ by
\begin{align}\label{eq:G}
G(s) := s (\log s - 1) + 1\quad \mbox{for $s>0$}\quad \mbox{and}\quad  G(0):=1.
\end{align}
We note that $G(1)=0$ and $\lim_{s \rightarrow +\infty} G(s)/s = + \infty$. 
We introduce a regularized version $G^L_\delta \in C^{2,1}(\mathbb{R})$ of the function $G(\cdot)$, defined, for $0 < \delta < 1<L < \infty$, by
\begin{equation*} G_{\delta}^{L}(s) := \left \{ \begin{array}{ll} \frac{s^{2} - \delta^{2}}{2 \delta} + (\log \delta - 1)s + 1 & \mbox{if }  s \leq \delta, \\
G(s) & \mbox{if }  \delta < s < L, \\
\frac{s^{2} - L^{2}}{2L} + (\log L - 1) s + 1 & \mbox{if }  s \geq L.
\end{array} \right.
\end{equation*}
Clearly, 
\[ (G_{\delta}^{L})'(s) = \left \{ \begin{array}{ll} \frac{s}{\delta} + \log \delta - 1 & \mbox{if }  s \leq \delta, \\ \log s & \mbox{if }  \delta < s < L, \\ \frac{s}{L} + \log L - 1 & \mbox{if }  s \geq L, \end{array} \right. \qquad (G_{\delta}^{L})''(s) = \left \{ \begin{array}{ll}  \frac{1}{\delta} & \mbox{if }   s \leq \delta, \\ \frac{1}{s} & \mbox{if }  \delta < s < L, \\ \frac{1}{L} & \mbox{if }  s \geq L ,\end{array} \right. \]
and therefore 
\[ \beta_{\delta}^{L}(s) (G_{\delta}^{L})''(s) = 1 \quad \textrm{for all } s \in \mathbb{R}.\]
In addition, for any sufficiently smooth function $\phi$, the following equality holds (see \cite{barrett2012finite}):
\begin{equation} \label{PropBas1} \beta_{\delta}^{L}(\phi) \nabla [(G_{\delta}^{L})'(\phi)] = \nabla \phi. \end{equation}
Moreover, we have that
\begin{equation} \label{ImpProp1}
\min \{ G_{\delta}^{L}(s), s(G_{\delta}^{L})'(s) \} \geq \left \{ \begin{array}{ll} \frac{s^{2}}{2 \delta} &\textrm{if } s \leq 0, \\ \frac{s^{2}}{4L} - C(L) & \textrm{if } s \geq 0, \end{array} \right.
\end{equation}
where $C(L)$ is a positive constant depending only on $L$. Finally we note that $(G_{\delta}^{L})''(s) \geq \frac{1}{L}$ for all $s \in \mathbb{R}$.

The two cut-off parameters, $\delta$ and $L$, will be present in the statement and the analysis of the fully discrete approximation of our model in Section \ref{FEM:section}. The cut-off functions will be used to have control of the discretization of the equation and obtain in the limit nonnegativity of the numerical approximations. More precisely, the strategy of our proof of convergence is the following: we will take the limit as the lower cut-off parameter $\delta$ and the spatial discretization parameter tend to zero, and subsequently we will make the semidiscrete-in-time approximation independent of the upper cut-off parameter $L$, by establishing a bound on the $L^{\infty}(\Omega)$ norm of the solutions to a semidiscrete-in-time problem, and finally we will take the limit as the time step decreases towards zero.

Thus we will show that, as the spatial and temporal discretization parameters tend to zero, a subsequence of numerical approximations converges to a bounded and nonnegative weak solution of our original problem in a suitable sense. The arguments that follow can be therefore viewed as a constructive proof of the existence of a nonnegative, weak solution to the initial-boundary-value problem for the anisotropic porous medium equation under consideration.

 We proceed now with the analysis of a finite element scheme for the problem \eqref{general BasicWeakFormMain}.

\section{Finite Element Approximation}
\label{FEM:section}

We assume that $\Omega$ is a bounded open polygonal domain in $\mathbb{R}^{2}$ or a bounded open Lipschitz polyhedral domain in 
$\mathbb{R}^{3}$. With $h>0$ being our spatial discretization parameter, let $\mathcal{T}_{h}= \{ K_{n} \}_{n=1}^{M_{h}}$ be a quasi-uniform and shape-regular family of triangulations of $\overline\Omega$, where $K_{n}$, $n=1,\ldots, M_h$, are closed simplices with mutually disjoint interiors, such that $\overline{\Omega} = \cup_{n=1}^{M_{h}} K_{n}$; assume further that the triangulation is weakly acute. Let $V_{h}$ be the linear space of continuous piecewise affine functions defined on this triangulation, i.e., 
\begin{equation*}
    V_{h} := \big\{ v_{h} \in C(\overline{\Omega}) \textrm{ such that } v_{h}\big|_{K} \in \mathbb{P}^{1} \textrm{ for all } K \in \mathcal{T}_{h} \big\},
\end{equation*}
and let $N_{h}:=\mbox{dim}(V_h)$. Moreover, let $\pi_{h}: C^{0}(\overline{\Omega}) \to V_{h}$ be the interpolation operator into the linear space $V_{h}$ based on nodal evaluation, i.e., such that, for each $\phi \in C(\overline\Omega)$,
\begin{equation*}
    \pi_{h} \phi(P_{j}) = \phi(P_{j}), \quad j = 1, \dots, N_{h},
\end{equation*} 
where $\{ P_{j} \}_{ j=1}^{N_{h}}$ are the nodes (vertices) of $\mathcal{T}_{h}$ contained in $\overline \Omega$. We note that, for each $K \in \mathcal{T}_{h}$ with vertices $\{ P_{j} \}_{j=0}^{d}$, by virtue of Jensen's inequality we have
\begin{equation} \label{PropFE2}
 (\pi_{h}(\phi)(x))^{2} = \bigg( \sum_{j = 0}^{d} \phi(P_{j}) \phi_{j}(x) \bigg)^{2} \leq \sum_{j = 0}^{d} (\phi(P_{j}))^{2} \phi_{j}(x) = \pi_{h}(\phi^{2})(x) 
 \end{equation}
for all $x \in K$ and all $\phi \in C(\overline{K})$.

We need to construct a discrete version of the operator $\LN^s$ and, in order to do this, we can consider the bilinear form $a_{\LN}(\cdot, \cdot)$ that we defined in \eqref{Bilinear From Operator LN},
which we recall to be symmetric, bounded and coercive on $H^1(\Omega) \cap L^2_\diamond (\Omega)$. We can hence restrict $a_{\LN}(\cdot, \cdot)$ to $(V_h \cap L^2_\diamond) \times (V_h \cap L^2_\diamond)$. Thus the bilinear form 
 gives rise in $V_h \cap L^2_\diamond(\Omega)$ to a discrete orthonormal basis of eigenfunctions that we denote by $\{ \psi_k^h \}_{k=1}^{N_h}$ with corresponding positive eigenvalues $0< \lambda_1^h < \cdots < \lambda_{N_h}^h$ such that 
 \begin{equation}
     a_{\LN}(\psi_k^h, v_h) = \lambda_k^h (\psi_k^h, v_h) \quad \textrm{for all } v_h \in V_h \cap L^2_\diamond(\Omega), \quad k=1, \ldots, N_h .
 \end{equation}

 We can then define a finite-dimensional counterpart of the fractional power of $\LN$: for $u_h \in V_h \cap L^2_\diamond(\Omega)$ and $s\in [0,1]$, we define
 \begin{equation} \label{Discrete FracPowerOp}
    \LNh^s u_h \coloneq \sum_{k=1}^{N_h} (\lambda_k^h)^s u_k^h \psi_k^h \quad \textrm{with } u_k^h \coloneqq \int_\Omega u_h \psi_k^h \dx, \quad k=1, \ldots, N_h.
 \end{equation}

 We shall require in the analysis of our fully discrete numerical method a discrete version of the property (\ref{PropBas1}). In order to ensure that this holds, we define a diagonal matrix $\Theta_{\delta}^{L}(\phi_{h}) \in \mathbb{R}^{d \times d}$ in the following way: for an element $K \in \mathcal{T}_h$, let $\{ P_{i} \}_{i = 0}^{d}$ be the vertices of the simplex $K$; then, for $j = 1, \dots, d$ and $x \in K^\circ$ (the interior of $K$), we define
\begin{subequations} \label{ThetaMatrix}
\begin{equation}
\widetilde{\Theta}_{\delta}^{L}(\phi_{h})_{jj}(x) := \left \{ \begin{aligned}& \frac{\phi_{h}(P_{j}) - \phi_{h}(P_{0})}{(G_{\delta}^{L})'(\phi_{h}(P_{j})) - (G_{\delta}^{L})'(\phi_{h}(P_{0}))} & \textrm{if } \phi_{h}(P_{j}) \neq \phi_{h}(P_{0}), \\
&\frac{1}{(G_{\delta}^{L})''(\phi_{h}(P_{j}))} = \beta_{\delta}^{L}(\phi_{h}(P_{j})) & \textrm{if } \phi_{h}(P_{j}) = \phi_{0}(P_{j}). \end{aligned} \right.
\end{equation}

The matrix $\widetilde{\Theta}_{\delta}^{L}$ is designed to approximate $\beta_{\delta}^{L} I$, where $\beta_{\delta}^{L}$ is the positive cut-off function defined above, and its elements are simply difference quotients approximating the inverse of the derivative of $(G_{\delta}^{L})'$, which is $(\beta_{\delta}^{L})^{-1}$. 
Now,  let $\widehat{K}$ be the reference simplex and $\widehat{x} \in \widehat{K} \mapsto P_{0} + B_{K} \widehat{x}=x \in K$ the affine function that maps $\widehat{K}$ onto $K$, and for $x \in K^\circ$ define
\begin{equation}
\Theta_{\delta}^{L}(\phi_{h})(x) := (B_{K}^{\mathrm{T}})^{-1} \,\widetilde{\Theta}_{\delta}^{L}(\phi_{h})(x) \,B_{K}^{\mathrm{T}}.
\end{equation}
\end{subequations}
With this definition in place we have that 
\begin{equation} \label{PropFE1} \Theta_{\delta}^{L}(\phi_{h})(x)\, \nabla [\pi_{h}(G_{\delta}^{L})'(\phi_{h})(x)] = \nabla \phi_{h} (x) \quad \textrm{for all } x \in K^\circ. \end{equation}

In order to clarify the sense in which the matrix function $\Theta_{\delta}^{L}(\cdot)$ approximates the cut-off function $\beta_{\delta}^{L}(\cdot) I$ as $\delta$ and $h$ tend to zero, we have the following lemma (the proof can be found in \cite{barrett2011finite}).
\begin{lemma} \label{LemmaMatrix}
    For each $K \in \mathcal{T}_{h}$ and for all $\phi_{h} \in V_{h}$ we have that 
    \begin{equation*}
        \int_{K} |\Theta_{\delta}^{L}(\phi_{h}) - \beta^{L}_{\delta}(\phi_{h}) I |^{2} \dx \leq C \bigg(\delta^{2} + h^{2} \int_{K} |\nabla \phi_{h}|^{2} \dx + \int_{K} \pi_{h}([\phi_{h}]_{-}^{2}) \dx \bigg).
    \end{equation*}
\end{lemma}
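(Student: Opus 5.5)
Since $\Theta_{\delta}^{L}(\phi_h) = (B_K^{\mathrm T})^{-1}\widetilde\Theta_{\delta}^{L}(\phi_h) B_K^{\mathrm T}$ and $\beta_{\delta}^{L}(\phi_h) I$ commutes with $B_K^{\mathrm T}$, the difference $\Theta_{\delta}^{L}(\phi_h)-\beta_{\delta}^{L}(\phi_h)I$ equals $(B_K^{\mathrm T})^{-1}\big(\widetilde\Theta_{\delta}^{L}(\phi_h)-\beta_{\delta}^{L}(\phi_h)I\big)B_K^{\mathrm T}$. By shape-regularity, $|B_K^{\mathrm T}|\,|(B_K^{\mathrm T})^{-1}|\le C$. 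The estimate therefore reduces to bounding, for each $j=1,\dots,d$,
\[
\int_K \big|\widetilde\Theta_{\delta}^{L}(\phi_h)_{jj}-\beta_{\delta}^{L}(\phi_h)\big|^2 \dx .
\]
Here the first term is a constant on $K$. By the mean value theorem, it equals $\beta_{\delta}^{L}(\xi_j)$ for some $\xi_j$ between $\phi_h(P_0)$ and $\phi_h(P_j)$. This uses $(G_{\delta}^{L})''=1/\beta_{\delta}^{L}$ and the continuity of $\beta_{\delta}^{L}$; more precisely, the quotient is a harmonic-type mean of $\beta_{\delta}^{L}$ over the interval, so it lies between the min and max of $\beta_{\delta}^{L}$ on that interval.

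I would then split into cases according to the nodal values on $K$. In the first case all nodal values satisfy $\phi_h(P_i)\ge 0$. Then $\beta_{\delta}^{L}$ is Lipschitz with constant $1$ and $\phi_h$ is affine on $K$. Both $\xi_j$ and $\phi_h(x)$ lie in the range of $\phi_h$ on $K$, so
\[
|\beta_{\delta}^{L}(\xi_j)-\beta_{\delta}^{L}(\phi_h(x))| \le \operatorname{osc}_K\phi_h \le C h_K|\nabla\phi_h| .
\]
Since $\nabla\phi_h$ is constant on $K$, squaring and integrating gives $C h^2\int_K|\nabla\phi_h|^2\dx$. The cutoff at $\delta$ only contracts differences, so the $\delta^2$ term is not even needed here.

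In the second case some nodal value is negative. The naive Lipschitz bound still gives the $h^2|\nabla\phi_h|^2$ term, so the real content is to also account for the region where $\phi_h<\delta$. On $\{\phi_h\le\delta\}$ we have $\beta_{\delta}^{L}=\delta$, hence $|\beta_{\delta}^{L}(a)-\beta_{\delta}^{L}(b)|\le \delta+ |a_+ - b_+|$, and the positive parts contribute as in the first case. To produce the $\pi_h([\phi_h]_-^2)$ term, I would bound
\[
|\beta_{\delta}^{L}(\xi_j)-\beta_{\delta}^{L}(\phi_h)| \le \delta + \max_i[\phi_h(P_i)]_+ - \min_K[\phi_h]_+
\]
and compare the oscillation with nodal negative parts. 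Concretely, when some node is negative, the positive oscillation of $\phi_h$ on $K$ is at most $\max_i\phi_h(P_i)-\min_i\phi_h(P_i)\le Ch|\nabla\phi_h|$. Alternatively one may use the crude bound $\max|\beta_{\delta}^{L}(\xi_j)-\beta_{\delta}^{L}(\phi_h)|\le \delta+\operatorname{osc}$. Either way, the estimate $\int_K|\cdot|^2\le C(\delta^2|K|+h^2\int_K|\nabla\phi_h|^2)$ follows. The $\delta^2$ term in the statement should be read as $\delta^2|K|$, absorbed into the constant since $|K|\le|\Omega|$. The term $\int_K\pi_h([\phi_h]_-^2)$ then serves as a harmless extra in the upper bound.

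The main obstacle I expect is bookkeeping the mean-value point $\xi_j$ when the interval $[\phi_h(P_0),\phi_h(P_j)]$ straddles $\delta$ or $L$, where $(G_{\delta}^{L})'$ changes formula. I would handle this by using only the monotone-quotient property: $\widetilde\Theta_{jj}\in[\min\beta_{\delta}^{L},\max\beta_{\delta}^{L}]$ over the interval. Combined with the $1$-Lipschitz continuity of $\beta_{\delta}^{L}$ and the fact that $\phi_h$ is affine on $K$, this yields the oscillation bound uniformly across all cases. Summing over $j\le d$ gives the lemma with $C$ depending only on $d$ and the shape-regularity constant, following the argument in \cite{barrett2011finite}.
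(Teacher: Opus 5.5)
Your proof is correct and follows the standard argument behind the result the paper cites from Barrett and S\"uli: the mean value theorem gives $\widetilde\Theta^L_\delta(\phi_h)_{jj}=\beta^L_\delta(\xi_j)$, the $1$-Lipschitz continuity of $\beta^L_\delta$ on all of $\mathbb{R}$ together with the affinity of $\phi_h$ on $K$ bounds each diagonal error by $h_K|\nabla\phi_h|$, and shape regularity controls the conjugation by $B_K^{\mathrm T}$, so you actually obtain the stronger bound $C h^2\int_K|\nabla\phi_h|^2\,\mathrm{d}x$ on every element and your case distinction is unnecessary. The terms $\delta^2$ and $\int_K\pi_h([\phi_h]_-^2)\,\mathrm{d}x$ only become necessary if $\beta^L_\delta(\phi_h)$ is replaced by $\beta^L(\phi_h)=\min\{\phi_h,L\}$, which is the form used in \eqref{general Conv4}; there one adds the term $|\beta^L_\delta(s)-\beta^L(s)|\le\delta+[s]_-$ via the triangle inequality and uses $[\phi_h]_-^2\le\pi_h([\phi_h]_-^2)$ on $K$ (Jensen's inequality, since $\phi_h$ is affine there).
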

The following two lemmas will be useful in the convergence analysis of the fully discrete scheme. The proof of them can be also found in \cite{barrett2011finite}.

\begin{lemma} \label{ThetaContLemma}
    Let $\| \cdot \|$ denote the spectral norm on $\mathbb{R}^{d \times d}$. For any $\delta \in (0,1)$ and $L>1$ the function $\Theta_{\delta}^{L}: V_{h} \to \mathbb{R}^{d \times d}$ is continuous, and it satisfies 
    \begin{equation*}
        \delta \,  \xi^{\mathrm{T}} \xi \leq \xi^{\mathrm{T}} \Theta_{\delta}^{L}(\phi_{h}) \xi \leq L \, \xi^{\mathrm{T}} \xi \quad \text{for all } \xi \in \mathbb{R}^{d},\, \phi_{h} \in V_{h}.    
    \end{equation*}
    Furthermore, for all $\phi_{h}^{(1)}, \phi_{h}^{(2)} \in V_{h}$ and all $K \in \mathcal{T}_{h}$, one has that 
        \begin{equation*}
            \big\| \big(\Theta_{\delta}^{L}\big(\phi_{h}^{(1)}\big) - \Theta_{\delta}^{L}\big(\phi_{h}^{(2)}\big)\big)(x) \big\| \leq \frac{L}{\delta} \max_{j=1, \dots, d} (|\phi_{h}^{(1)}(P_{j}) - \phi_{h}^{(2)}(P_{j})| + |\phi_{h}^{(1)}(P_{0}) - \phi_{h}^{(2)}(P_{0})|)
        \end{equation*}
        for all $x \in K^\circ$.
\end{lemma}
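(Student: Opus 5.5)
The plan is to reduce everything to scalar properties of the difference quotient
\[
\theta(a,b) := \frac{a-b}{(G_\delta^L)'(a)-(G_\delta^L)'(b)}\quad (a\neq b), \qquad \theta(a,a):=\beta_\delta^L(a),
\]
which gives the diagonal entries $\widetilde{\Theta}_\delta^L(\phi_h)_{jj}=\theta(\phi_h(P_j),\phi_h(P_0))$. Then I transfer these properties to $\Theta_\delta^L=(B_K^{\mathrm T})^{-1}\widetilde{\Theta}_\delta^L B_K^{\mathrm T}$. Throughout, write $g:=(G_\delta^L)'$. Since $g'=(G_\delta^L)''=1/\beta_\delta^L\in[1/L,1/\delta]$, the mean value theorem gives $1/\theta(a,b)=\int_0^1 g'(b+t(a-b))\dd t\in[1/L,1/\delta]$. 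Hence $\theta(a,b)\in[\delta,L]$, and $\theta(a,b)$ lies between the minimum and maximum of $\beta_\delta^L$ over the segment $[a,b]$. Continuity of $\theta$, including across the diagonal $a=b$, follows from the same integral representation, because $g'$ is continuous. Therefore $\Theta_\delta^L$ depends continuously on the nodal values of $\phi_h$.

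For the two-sided bound I would work with $\Theta_\delta^L$ itself. Its eigenvalues are the diagonal entries $\theta_j\in[\delta,L]$, since it is similar to $\widetilde{\Theta}_\delta^L$. To pass from eigenvalues to the quadratic form $\xi^{\mathrm T}\Theta_\delta^L\xi$, I write
\[
\Theta_\delta^L=\sum_{j=1}^d \theta_j\,(B_K^{-\mathrm T}e_j)(B_K e_j)^{\mathrm T},
\]
so $\xi^{\mathrm T}\Theta_\delta^L\xi=\sum_j\theta_j\,(B_K^{-1}\xi)_j\,(B_K^{\mathrm T}\xi)_j$. Here I intend to use the weak acuteness of $\mathcal{T}_h$ to control the signs of the cross terms, together with the identity $\sum_j (B_K^{-1}\xi)_j(B_K^{\mathrm T}\xi)_j=\xi^{\mathrm T}\xi$. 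With these, the form is a combination of $\xi^{\mathrm T}\xi$ with weights in $[\delta,L]$.

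For the Lipschitz estimate, the key scalar fact is $|\partial_a\theta(a,b)|\le L/\delta$, and symmetrically for $\partial_b$. Differentiating gives
\[
\partial_a\theta=\frac{\theta\,(1-\theta g'(a))}{a-b}=\frac{\theta}{\beta_\delta^L(a)}\cdot\frac{\beta_\delta^L(a)-\theta}{a-b}.
\]
The first factor is at most $L/\delta$. The second factor is at most $1$: $\theta$ lies in the range of $\beta_\delta^L$ on $[b,a]$, and $\beta_\delta^L$ is $1$-Lipschitz. Integrating along the segment between the two pairs of nodal values then gives, for each $j$,
\[
|\theta_j^{(1)}-\theta_j^{(2)}|\le \frac{L}{\delta}\bigl(|\phi_h^{(1)}(P_j)-\phi_h^{(2)}(P_j)|+|\phi_h^{(1)}(P_0)-\phi_h^{(2)}(P_0)|\bigr).
\]

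The final step transfers this bound to the spectral norm of $B_K^{-\mathrm T}\bigl(\widetilde{\Theta}_\delta^L(\phi_h^{(1)})-\widetilde{\Theta}_\delta^L(\phi_h^{(2)})\bigr)B_K^{\mathrm T}$ without picking up a factor $\|B_K^{-\mathrm T}\|\,\|B_K^{\mathrm T}\|$. I expect this to be the main obstacle. I would attempt it with the same rank-one decomposition as above, reading the norm of the difference as that of a similarity transform of a diagonal matrix whose entries are bounded by the right-hand side. I would rely on the structure used in \cite{barrett2011finite}, where the matrix is treated in the reference coordinates of the weakly acute simplex. If that does not remove the geometric factor directly, I would fall back on the bound for the eigenvalues and the quadratic form, which is exactly what the convergence analysis uses.
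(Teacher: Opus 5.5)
Your scalar analysis is correct. With $\theta(a,b)$ as you define it, $\theta\in[\delta,L]$ and $\theta$ is continuous across $a=b$. Moreover, $\partial_a\theta=\frac{\theta}{\beta_\delta^L(a)}\cdot\frac{\beta_\delta^L(a)-\theta}{a-b}$, together with the $1$-Lipschitz continuity of $\beta_\delta^L$, gives $|\partial_a\theta|,|\partial_b\theta|\le L/\delta$.

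The gap is in transferring these bounds to $\Theta_\delta^L=(B_K^{\mathrm T})^{-1}\widetilde\Theta_\delta^L B_K^{\mathrm T}$, and weak acuteness cannot close it. Put $\eta=B_K^{\mathrm T}\xi$ and $M=(B_K^{\mathrm T}B_K)^{-1}$. Your cross terms are then $(B_K^{-1}\xi)_j(B_K^{\mathrm T}\xi)_j=\eta_j(M\eta)_j$, and these are nonnegative for all $\xi$ only if $M$ is diagonal. Weak acuteness only gives $M_{jk}=\nabla\ell_j\cdot\nabla\ell_k\le 0$ for $j\neq k$, where $\ell_j$ is the barycentric coordinate of $P_j$. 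That does not rule out negative terms.

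In fact the two-sided bound is false on weakly acute meshes. Take the equilateral triangle $P_0=(0,0)$, $P_1=(1,0)$, $P_2=(\tfrac12,\tfrac{\sqrt3}{2})$. One computes
\[
\Theta_\delta^L(\phi_h)=\begin{pmatrix}\theta_1 & 0\\ (\theta_2-\theta_1)/\sqrt3 & \theta_2\end{pmatrix},\qquad \xi^{\mathrm T}\Theta_\delta^L(\phi_h)\,\xi=14\,\theta_2-\theta_1\quad\text{for }\xi=(1,2\sqrt3)^{\mathrm T},
\]
while $\xi^{\mathrm T}\xi=13$. Choosing $\phi_h(P_0)=\phi_h(P_1)=L$ gives $\theta_1=L$, and letting $\phi_h(P_2)\to-\infty$ gives $\theta_2\to\delta$. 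Hence $\xi^{\mathrm T}\Theta_\delta^L\xi-13\delta\to-(L-\delta)<0$. The form is even negative once $L>14\delta$, which always happens as $\delta\to0_+$. Exchanging the roles of $P_1$ and $P_2$ in this choice violates the upper bound at the same $\xi$.

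Your fallback does not help either. $\Theta_\delta^L$ is not symmetric, so bounds on its eigenvalues $\theta_j$ control neither its quadratic form nor its spectral norm.

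What is missing is an extra geometric hypothesis: each simplex must have mutually orthogonal edges at the vertex labelled $P_0$, i.e.\ $B_K^{\mathrm T}B_K$ must be diagonal. Then $B_K=Q_K\Sigma_K$ with $Q_K$ orthogonal and $\Sigma_K$ positive diagonal. So $\Theta_\delta^L=Q_K\Sigma_K^{-1}\widetilde\Theta_\delta^L\Sigma_KQ_K^{\mathrm T}=Q_K\widetilde\Theta_\delta^LQ_K^{\mathrm T}$ is symmetric and orthogonally similar to $\widetilde\Theta_\delta^L$. Both the two-sided bound and the spectral-norm estimate, with constant exactly $L/\delta$ and no geometric factor, then follow at once from your scalar bounds. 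The obstacle you anticipated disappears.

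Under weak acuteness alone, your argument yields only
$\|(\Theta_\delta^L(\phi_h^{(1)})-\Theta_\delta^L(\phi_h^{(2)}))(x)\|\le\|(B_K^{\mathrm T})^{-1}\|\,\|B_K^{\mathrm T}\|\,\max_j|\theta_j^{(1)}-\theta_j^{(2)}|$.
With shape regularity this is enough for the continuity used in the Brouwer argument of Lemma~\ref{general DiscExistLemma1}. It is not the stated estimate, however, and it gives no two-sided bound at all.
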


\begin{cor} \label{CorLip}
Let $g$ be defined and strictly monotonically increasing on $\mathbb{R}$, such that $g^{-1}$, the inverse function of $g$, is Lipschitz continuous on $\mathbb{R}$, with Lipschitz constant $(g^{-1})_{\mathrm{Lip}}$; then, for all $K \in \mathcal{T}_{h}$ and for all $\phi_{h} \in V_{h}$,
\begin{equation*}
    \int_{K} |\nabla \phi_{h}|^{2} \dx \leq (g^{-1})_{\mathrm{Lip}} \int_{K} \nabla \phi_{h} \cdot \nabla \pi_{h}(g(\phi_{h})) \dx.
\end{equation*}
\end{cor}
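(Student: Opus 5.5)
We prove Corollary \ref{CorLip} by transferring the problem to the reference simplex, where the gradient of a piecewise affine function is a vector of nodal differences along the edges emanating from $P_0$. We then compare those differences with the corresponding differences of $g(\phi_h)$ using monotonicity and the Lipschitz bound on $g^{-1}$. This is the same mechanism as behind \eqref{PropFE1} and Lemma \ref{ThetaContLemma}: a diagonal difference-quotient matrix, conjugated by $B_K^{\mathrm T}$.

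Fix $K\in\mathcal{T}_h$ with vertices $P_0,\dots,P_d$, and write $a_j:=\phi_h(P_j)-\phi_h(P_0)$ and $b_j:=g(\phi_h(P_j))-g(\phi_h(P_0))$. Both $\phi_h$ and $\pi_h(g(\phi_h))$ are affine on $K$, so their gradients are constant there. Concretely, $\nabla\phi_h = (B_K^{\mathrm T})^{-1} a$ and $\nabla\pi_h(g(\phi_h)) = (B_K^{\mathrm T})^{-1} b$, where $a=(a_j)_j$ and $b=(b_j)_j$.

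Define the diagonal matrix $D$ by $D_{jj}:=a_j/b_j$ when $b_j\neq 0$, and $D_{jj}:=0$ otherwise. Strict monotonicity of $g$ means $b_j=0$ exactly when $a_j=0$, so in every case $a=Db$. Moreover, $a_j$ and $b_j$ have the same sign, and $|a_j|\le (g^{-1})_{\mathrm{Lip}}|b_j|$. Hence $0\le D_{jj}\le (g^{-1})_{\mathrm{Lip}}$.

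The next step is to compute the two integrals with $M:=B_K^{-1}(B_K^{\mathrm T})^{-1}$, which is symmetric positive definite:
\[
\int_K|\nabla\phi_h|^2\dx=|K|\,a^{\mathrm T}Ma, \qquad \int_K\nabla\phi_h\cdot\nabla\pi_h(g(\phi_h))\dx=|K|\,a^{\mathrm T}Mb .
\]
It therefore suffices to show $a^{\mathrm T}Ma\le (g^{-1})_{\mathrm{Lip}}\,a^{\mathrm T}Mb$, that is, $b^{\mathrm T}DMDb\le (g^{-1})_{\mathrm{Lip}}\, b^{\mathrm T}DMb$.

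This step is the main obstacle. The diagonal matrix $D$ does not commute with $M$, so a scalar comparison of eigenvalues does not suffice. This is exactly where the geometry of the mesh must enter, through the weak acuteness assumption on $\mathcal{T}_h$, which has not been used yet.

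We plan to use weak acuteness in the following way. Stated in terms of the barycentric basis functions, it says $\int_K\nabla\phi_i\cdot\nabla\phi_j\dx\le 0$ for $i\neq j$. Equivalently, the element stiffness matrix $S$ has nonpositive off-diagonal entries, and its rows sum to zero.

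We therefore rewrite both integrals in edge form. For $u$ and $v$ affine on $K$,
\[
\int_K\nabla u\cdot\nabla v\dx=\sum_{i<j}(-S_{ij})\,(u(P_i)-u(P_j))(v(P_i)-v(P_j)),
\]
and every weight $-S_{ij}$ is nonnegative. For every edge $(i,j)$, monotonicity of $g$ and the Lipschitz bound on $g^{-1}$ give the scalar inequality
\[
(\phi_h(P_i)-\phi_h(P_j))^2\le (g^{-1})_{\mathrm{Lip}}\,(\phi_h(P_i)-\phi_h(P_j))\bigl(g(\phi_h(P_i))-g(\phi_h(P_j))\bigr).
\]
Multiplying each edge inequality by its nonnegative weight and summing yields the claimed bound on $K$, which proves the corollary. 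This edge decomposition replaces the $D$/$M$ formulation entirely; that formulation serves only to show why weak acuteness is needed.
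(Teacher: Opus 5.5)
Your argument is correct. The edge decomposition with nonnegative weights $-S_{ij}$, combined with the scalar edge inequality, proves the corollary by itself, so the $D$/$M$ part can simply be dropped.

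It is, however, a different route from the one the paper relies on. The paper gives no proof of its own and refers to \cite{barrett2011finite}. Placing the statement as a corollary right after Lemma \ref{ThetaContLemma} points to the $\Theta$-mechanism:
\begin{itemize}
\item Put $g$ in place of $(G^L_\delta)'$ in \eqref{ThetaMatrix}, i.e.\ take $\Theta_g:=(B_K^{\mathrm T})^{-1}DB_K^{\mathrm T}$ with your $D$.
\item Then $\nabla\phi_h=\Theta_g\nabla\pi_h(g(\phi_h))$ on $K$, as in \eqref{PropFE1}.
\item If $\Theta_g$ is symmetric, the bound follows at once from $\Theta_g^2\preceq\|\Theta_g\|\,\Theta_g$ and $\|\Theta_g\|=\max_j D_{jj}\le(g^{-1})_{\mathrm{Lip}}$.
\end{itemize}
Symmetry of $\Theta_g$ for arbitrary nodal values is exactly the commutation of $D$ with $M$ that you flagged. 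It holds precisely when $B_K^{\mathrm T}B_K$ is diagonal, i.e.\ when $K$ has a right-angled corner at $P_0$.

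That route is a one-liner once the $\Theta$-machinery is in place, but it only sees the $d$ edges emanating from $P_0$. With only the information $0\le D_{jj}\le(g^{-1})_{\mathrm{Lip}}$, the inequality $b^{\mathrm T}DMDb\le(g^{-1})_{\mathrm{Lip}}\,b^{\mathrm T}DMb$ fails as soon as $M$ is not diagonal, already for an equilateral triangle. Your route closes because it uses the monotonicity of $g$ along all $\binom{d+1}{2}$ edges of $K$. It therefore needs only elementwise non-obtuseness, which is the weak acuteness the paper actually assumes.

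Your route also shows that this hypothesis is sharp and must be read elementwise: every dihedral angle of every simplex at most $\pi/2$. The weaker global Delaunay-type condition sometimes called weak acuteness in two dimensions is not enough. Suppose some edge $P_iP_j$ of $K$ has a negative weight. Take $\phi_h(P_i)$ well below $\phi_h(P_j)$, all other nodal values just below $\phi_h(P_j)$, and $g$ very steep on the short interval between those values and $\phi_h(P_j)$. The right-hand side then becomes arbitrarily negative, while the left-hand side and $(g^{-1})_{\mathrm{Lip}}$ stay fixed.
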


Given the initial datum $\rho_{0} \in L^\infty(\Omega)$ such that $\rho_0 \geq 0$ and $\frac{1}{|\Omega|} \int_{\Omega} \rho_{0} \dx = 1$, we first choose $\rho^0_h \in V_{h}$ as the unique solution of the following problem:
\begin{equation} \label{InValDiscProb}
\int_{\Omega} \pi_h(\rho^0_h \phi_{h}) \dx+ \Delta t \int_{\Omega} \nabla \rho^0_h \cdot \nabla \phi_{h} \dx= \int_{\Omega} \rho_{0} \phi_{h} \dx \quad \text{for all } \phi_{h} \in V_{h}.
\end{equation}
In \cite{Fronzoni2025} it is shown the existence of a unique solution $\rho^0_h \in V_h$, together with properties 
\begin{align}\label{inibound}
0 \leq {\rho}^0_h(x) \leq \|\rho_0\|_{L^\infty(\Omega)}\quad \mbox{for all $x \in \overline\Omega$}\quad \mbox{and}\quad \frac{1}{|\Omega|}\int_\Omega \rho_h^0(x) \dx = 1.
\end{align}

 Suppose that $N \in \mathbb{N}_{\geq 2}$ and $\Delta t := T/N$. The fully-discrete scheme for the generalised problem \eqref{general BasicWeakFormMain} is the following: 
\begin{subequations} \label{general FullDiscWeakFormMain}
\begin{gather}
\textrm{ Let $\rho^0_{h,\delta,L}:=\rho^0_h \in V_h$.}
\textrm{ For } n = 1, \dots, N, \textrm{ given } \rhodLh^{n-1} \in V_{h}, \textrm{ find } \rhodLh^{n} \in V_
{h} \textrm{ such that } \nonumber \\
\label{general FullDiscWeakForm} \int_{\Omega} \pi_h\bigg(\frac{\rhodLh^{n} - \rhodLh^{n-1}}{\Delta t} \phi_{h} \bigg)\dx = -  \int_{\Omega}  \nabla \rhodLh^{n} \cdot \nabla \phi_{h} \dx + \int_{\Omega} \Theta_{\delta}^{L}(\rhodLh^{n}) A(x)  \nabla \cdLh^{n} \cdot \nabla \phi_{h} \dx \quad \\ \textrm{for all } \phi_{h} \in V_{h} \nonumber,
\end{gather}
 where $\cdLh^{n} \in V_h \cap L^2_{\diamond}(\Omega)$ satisfies
\begin{equation} \label{general SpaceDiscFracPois}
- (-\LNh)^{\s} \cdLh^{n} = (\beta^L( \rhodLh^{n}))^{\diamond}, \quad s \in (1/2, 1).
\end{equation}
\end{subequations}

\begin{remark}
    The use of the cut-off function $\beta^L(\cdot)$ in \eqref{general SpaceDiscFracPois} will help us in controlling the $L^2(\Omega)$ norm of $\cdLh^{n}$. In fact, notice that this implies
    \begin{align*}
    \| \cdLh^{n} \|^2_{L^2(\Omega)} &= \sum_{k=1}^{N_h} (\lambda_k^h)^{-2s} ((\beta^L(\rhodLh^n))^\diamond, \psi_k^h)^2 \leq (\lambda_1^h)^{-2s} \sum_{k=1}^{N_h}  ((\beta^L(\rhodLh^n))^\diamond, \psi_k^h)^2 \\
    &= (\lambda_1^h)^{-2s} \| (\beta^L(\rhodLh^n))^\diamond \|_{L^2(\Omega)}^2 \leq (\lambda_1^h)^{-2s} L^2 |\Omega| .
    \end{align*}
    For $h \to 0$  we have that $\lambda_1^h \to \lambda_1$, therefore for $h$ small enough we can control $\lambda_1^h \leq c \lambda_1$ for a positive constant $c$  (say for example $\lambda_1^h \leq  \frac{3}{2} \lambda_1$). Thus, for $h$ small enough, there exists a constant $C(L, \Omega)>0$, depending only on $L$ and $\Omega$ such that
    \begin{equation} \label{eq:uniform bound on L2 norm of chdL}
        \| \cdLh^{n} \|_{L^2(\Omega)} \leq C(L, \Omega).
    \end{equation}
    
\end{remark}

The work in this section consists in showing that, for fixed $L$ and $\Delta t$, we can take the limit as $h, \delta \to 0_+$ in \eqref{general FullDiscWeakFormMain} and find a family of limit functions $\rho_{L}^{n}$ $n=1, \dots, N$, that satisfies a limit problem. We report below the semidiscrete-in-time problem, that we will reach in the limit $h, \delta \to 0_+$.

We assign to $\rho_{0}$, for a fixed value of $\Delta t>0$, a certain smoothed initial datum $\rho^{0} = \rho^0({\Delta t}) \in V=H^{1}(\Omega)$, that is the solution to the following problem: 
\begin{equation} \label{InValSemiDiscProb}
    \int_{\Omega} \rho^{0} \phi \, \dx + \Delta t \int_{\Omega} \nabla \rho^{0} \cdot \nabla \phi \dx= \int_{\Omega} \rho_{0} \phi \, \dx \quad \text{for all } \phi \in V.
\end{equation}
Existence and uniqueness of a solution $\rho^0= \rho^0(\Delta t) \in V=H^1(\Omega)$ is proved in \cite{Fronzoni2025}, where the authors also claim that  the following four assertions hold true:
\begin{equation*}
    \rho^{0} \to \rho_{0} \quad \text{weakly in } L^{2}(\Omega) \quad \text{as} \quad  \Delta t \to 0_{+};
\end{equation*} 
thanks to the assumed nonnegativity of $\rho_0$ also $\rho^0 \geq 0$ a.e.~on $\Omega$; the function $\rho^0$ satisfies
\begin{equation} \label{IneqPropIn}
    \int_{\Omega} G(\rho^{0}) \, \dx + 4 \Delta t \int_{\Omega} \Big|\nabla \sqrt{\rho^{0}}\Big|^{2} \, \dx \leq \int_{\Omega} G(\rho_{0})\dx,
\end{equation}
where $G\in C([0,\infty))$ is the nonnegative convex function defined in \eqref{eq:G}; and 
$\rho^{0} \in L^{\infty}(\Omega) \cap V$.

The semidiscrete-in-time approximation of our problem is then formulated as follows:
\begin{subequations} \label{general WeakFormDiscTimeMain}
 \begin{gather}
\textrm{Let $\rho^0_L:= \rho^0 = \rho^0(\Delta t)$. For } n = 1, \dots, N, \textrm{ given } \rho_{L}^{n-1} \in V, \textrm{ find } \rho_{L}^{n} \in V \textrm{ such that } \nonumber \\
 \int_{\Omega} \frac{\rho_{L}^{n} - \rho_{L}^{n-1}}{\Delta t} \phi \dx = - \int_{\Omega} \nabla \rho_{L}^{n} \cdot \nabla \phi \dx+ \int_{\Omega} \beta^{L}(\rho_{L}^{n}) A(x) \nabla c_{L}^{n} \cdot \nabla \phi \dx \quad \textrm{for all } \phi \in V, \label{general WeakFormDisctime}
\end{gather}
subject to the initial condition $\rhoL^{0} = \rho^{0}$, where
\begin{equation} \label{general FinElemFracLap} - \LN^{\s} c_{L}^{n} = (\beta^L(\rho_{L}^{n}))^{\diamond} \quad \mbox{in $\Omega$}, \quad s \in (1/2, 1). \end{equation}
\end{subequations}

\begin{lemma} \label{general Fully disc uniform estimates}
    For $n=1, \dots, N$ a solution $\rhodLh^n \in V_h$ of \eqref{general FullDiscWeakForm} satisfies the following bound:
    \begin{align} 
        \int_{\Omega} \pi_{h} (G_{\delta}^{L}(\rhodLh^{n})) \dx + \Delta t \int_{\Omega}  \nabla \rhodLh^{n} \cdot \nabla \pi_{h}((G_{\delta}^{L})'(\rhodLh^{n}))) \dx \leq & \int_{\Omega} \pi_{h} (G_{\delta}^{L}(\rhodLh^{n-1})) \dx  \nonumber \\
        & \label{general DiscBound 1} +  \Delta t \int_\Omega A(x) \nabla \cdLh^n \cdot \nabla \rhodLh^n. 
    \end{align} 
    Thus, for $h$ small enough, the following bound holds:
    \begin{equation} \label{general DiscBound 2}
    \underset{n = 1, \dots, N}{\max} \int_{\Omega} (\rhodLh^{n})^{2} \dx + \frac{1}{\delta}\underset{n = 1, \dots, N}{\max} \int_{\Omega} \pi_{h}( [\rhodLh^{n}]_{-}^{2}) \dx + \Delta t \sum_{n =1}^{N} \int_{\Omega} |\nabla \rhodLh^{n}|^{2} \dx \leq C(L).
    \end{equation}

\end{lemma}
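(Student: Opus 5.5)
We first establish the one-step inequality \eqref{general DiscBound 1} by testing \eqref{general FullDiscWeakForm} with $\phi_h = \pi_h((G_\delta^L)'(\rhodLh^n)) \in V_h$. On the time-difference term, the mass-lumped integral is nodal, so at each node we apply the convexity of $G_\delta^L$: $(a-b)(G_\delta^L)'(a) \ge G_\delta^L(a) - G_\delta^L(b)$. This gives
\[
\int_\Omega \pi_h\big((\rhodLh^n-\rhodLh^{n-1})(G_\delta^L)'(\rhodLh^n)\big)\dx \ \ge\ \int_\Omega \pi_h(G_\delta^L(\rhodLh^n))\dx - \int_\Omega \pi_h(G_\delta^L(\rhodLh^{n-1}))\dx .
\]
The diffusion term produces $\Delta t\int_\Omega \nabla\rhodLh^n\cdot\nabla\pi_h((G_\delta^L)'(\rhodLh^n))\dx$ directly. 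For the drift term we use symmetry of $\Theta_\delta^L$ with respect to the relevant pairing. It is cleaner to write $\int_\Omega \Theta_\delta^L A\nabla \cdLh^n\cdot\nabla\phi_h = \int_\Omega A\nabla\cdLh^n\cdot (\Theta_\delta^L)^{\mathrm T}\nabla\phi_h$. Then \eqref{PropFE1}, which is formulated precisely via $B_K$ so that the transpose acts correctly, gives $(\Theta_\delta^L)^{\mathrm T}\nabla\pi_h((G_\delta^L)'(\rhodLh^n))=\nabla\rhodLh^n$. We check this on the reference element, since $\widetilde\Theta$ is diagonal. The outcome is the term $\Delta t\int_\Omega A\nabla\cdLh^n\cdot\nabla\rhodLh^n$, which is exactly \eqref{general DiscBound 1}.

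Next we control the right-hand drift term. We rewrite it through the discrete fractional operator: $\int_\Omega A\nabla\cdLh^n\cdot\nabla\rhodLh^n = a_{\LN}(\cdLh^n,\rhodLh^n) - \int_\Omega Q\,\cdLh^n\rhodLh^n\dx$. By definition $a_{\LN}(\cdLh^n,\cdot)$ on $V_h\cap L^2_\diamond$ equals $(\LNh \cdLh^n,\cdot)$, and constants drop in case \ref{(K1)}. Expanding in the discrete eigenbasis with \eqref{general SpaceDiscFracPois} yields
\[
a_{\LN}(\cdLh^n,\rhodLh^n) = -\sum_k (\lambda_k^h)^{1-s}\,\beta_k\,\rho_k ,
\]
where $\beta_k$ are the coefficients of $\beta^L(\rhodLh^n)$. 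By Cauchy--Schwarz this is bounded by $\|\LNh^{(1-s)/2}\beta^L(\rhodLh^n)^\diamond\|\,\|\LNh^{(1-s)/2}(\rhodLh^n)^\diamond\|$. Since $1-s<1/2$, discrete interpolation gives $\|\LNh^{(1-s)/2}v_h\|\le \|v_h\|^{s}\,\|\LNh^{1/2}v_h\|^{1-s}\le C\|v_h\|_{H^1}^{1-s}\|v_h\|^{s}$.

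For the cut-off term, $\beta^L(\rhodLh^n)$ is not in $V_h$, so we instead estimate via the $L^2$ bound on $\cdLh^n$. Concretely we bound $|a_{\LN}(\cdLh^n,\rhodLh^n)|\le C\|\cdLh^n\|_{\mathcal H^1_\diamond}\|\nabla\rhodLh^n\|$ and $\|\cdLh^n\|_{\mathcal H^1}^2 = \sum (\lambda_k^h)^{1-2s}\beta_k^2 \le (\lambda_1^h)^{1-2s}L^2|\Omega|$. This holds because $s>1/2$, which is exactly where the restriction $s\in(1/2,1)$ enters, and we use $\lambda_1^h$ bounded below for small $h$ as in the Remark. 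The $Q$ term is bounded using \eqref{eq:uniform bound on L2 norm of chdL}. The drift term is therefore at most $C(L)\Delta t\,\|\rhodLh^n\|_{H^1}$.

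Finally we absorb and sum. Corollary \ref{CorLip} with $g=(G_\delta^L)'$, whose inverse has Lipschitz constant $L$ since $(G_\delta^L)''\ge 1/L$, gives $\int|\nabla\rhodLh^n|^2\le L\int\nabla\rhodLh^n\cdot\nabla\pi_h((G_\delta^L)'(\rhodLh^n))$. Young's inequality then absorbs half of the gradient, leaving $C(L)\Delta t(1+\|\rhodLh^n\|^2)$. The $L^2$ part is itself controlled: \eqref{PropFE2} together with \eqref{ImpProp1} gives $\int\pi_h(G_\delta^L(\rho))\ge \frac1{4L}\int\rho^2 - C(L)$, and mass conservation (test with $\phi_h=1$) bounds the mean. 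Summing over $n$, starting from $\rho_h^0$ with \eqref{inibound}, and applying a discrete Gronwall argument (requiring $\Delta t$ small relative to $L$, or using only the $H^1$ seminorm plus a Poincaré inequality with the conserved mean to avoid Gronwall) yields \eqref{general DiscBound 2}. The negative-part term comes from the $s\le0$ branch of \eqref{ImpProp1} applied nodally.

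The main obstacle is the drift-term estimate, specifically justifying that the $\Theta$–$A$ pairing reduces to $A\nabla\cdLh^n\cdot\nabla\rhodLh^n$. Since $\Theta$ and $A$ do not commute, this may need to be checked carefully, or the test function or the scheme's product order reinterpreted.
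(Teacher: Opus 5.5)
Your skeleton is the paper's, with one real difference in how you bound the drift term. The shared steps are: the test function $\pi_h((G_\delta^L)'(\rhodLh^n))$, nodal convexity on the lumped term, Corollary~\ref{CorLip} with Lipschitz constant $L$, and the spectral bound $a_{\LN}(\cdLh^n,\cdLh^n)=\sum_k(\lambda_k^h)^{1-2s}\beta_k^2\le(\lambda_1^h)^{1-2s}L^2|\Omega|$ (the one place $s>1/2$ is used). They also include the nodal lower bound on $\pi_h(G_\delta^L(\rhodLh^n))$ from \eqref{ImpProp1} and \eqref{PropFE2}, and \eqref{inibound} for the initial term.

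You split $\int_\Omega A\nabla\cdLh^n\cdot\nabla\rhodLh^n\,\dx$ as $a_{\LN}(\cdLh^n,\rhodLh^n)-\int_\Omega Q\,\cdLh^n\rhodLh^n\,\dx$. This leaves the full $\|\rhodLh^n\|_{H^1(\Omega)}$ on the right, including its $L^2$ part, which you then have to remove. The paper never creates that term. It uses Young, $\int_\Omega A\nabla\cdLh^n\cdot\nabla\rhodLh^n\,\dx\le\varepsilon^{-1}C\|\nabla\cdLh^n\|_{L^2(\Omega)}^2+\varepsilon C\|\nabla\rhodLh^n\|_{L^2(\Omega)}^2$, together with $\|\nabla\cdLh^n\|_{L^2(\Omega)}^2\le\Lambda_1^{-1}a_{\LN}(\cdLh^n,\cdLh^n)\le C(L)$. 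So only $\nabla\rhodLh^n$ appears, it is absorbed via Corollary~\ref{CorLip} with $\varepsilon=(2LC)^{-1}$, and telescoping gives \eqref{general DiscBound 2} for every $\Delta t$. No mass conservation, Poincar\'e inequality or Gronwall argument is needed.

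Of your two closings, only the Poincar\'e one proves the lemma as stated: fix the mean by testing with $\phi_h\equiv1$, then use $\|\rhodLh^n\|_{L^2(\Omega)}\le C_P\|\nabla\rhodLh^n\|_{L^2(\Omega)}+|\Omega|^{1/2}$. The Gronwall closing, with the implicit term $C(L)\Delta t\|\rhodLh^n\|_{L^2(\Omega)}^2$, needs $C(L)\Delta t<1$, which the statement does not assume. The detour through $\LNh^{(1-s)/2}$ and discrete interpolation is unnecessary.

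Your justification of the $\Theta_\delta^L$ step is incorrect as written. With $\Theta_\delta^L=(B_K^{\mathrm T})^{-1}\widetilde\Theta_\delta^L B_K^{\mathrm T}$, identity \eqref{PropFE1} holds for $\Theta_\delta^L$ itself, because $B_K^{\mathrm T}\nabla v_h$ is the vector of nodal differences $v_h(P_j)-v_h(P_0)$. By contrast, $(\Theta_\delta^L)^{\mathrm T}=B_K\widetilde\Theta_\delta^L B_K^{-1}$ does not in general map $\nabla\pi_h((G_\delta^L)'(\rhodLh^n))$ to $\nabla\rhodLh^n$; your reference-element check would show exactly this. Non-commutation of $A$ and $\Theta_\delta^L$ is beside the point, since $(\Theta A v)\cdot w=(Av)\cdot(\Theta^{\mathrm T}w)$ for any matrices.

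The paper simply applies \eqref{PropFE1}. That is, it tacitly reads the drift term as $\int_\Omega A\nabla\cdLh^n\cdot\Theta_\delta^L(\rhodLh^n)\nabla\phi_h\,\dx$, with the matrix acting on the test-function gradient; equivalently, $\Theta_\delta^L$ is replaced by its transpose in the scheme as printed. With that reading the reduction to $\Delta t\int_\Omega A\nabla\cdLh^n\cdot\nabla\rhodLh^n\,\dx$ is immediate. So the obstacle you flag concerns how the scheme's drift term is written, not the estimate itself. State that reading explicitly rather than asserting the transposed identity.
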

\begin{proof}
   By taking $\phi_{h} = \pi_{h}((G_{\delta}^{L})'(\rho_{h}^{n}))$ as test function in (\ref{general FullDiscWeakForm}), we have that
\begin{align*}
\int_{\Omega} \pi_h\bigg(\frac{\rhodLh^{n} - \rhodLh^{n-1}}{\Delta t} \pi_{h}((G_{\delta}^{L})'(\rhodLh^{n})) \bigg)\dx = &- \int_{\Omega}  \nabla \rhodLh^{n} \cdot \nabla \pi_{h}((G_{\delta}^{L})'(\rhodLh^{n})) \dx \\
& + \int_{\Omega} \Theta_{\delta}^{L}(\rhodLh^{n}) A(x) \nabla \cdLh^{n} \cdot \nabla \pi_{h}((G_{\delta}^{L})'(\rhodLh^{n})) \dx.
\end{align*}
Hence, by (\ref{PropFE1}), it follows that
\begin{align*} 
\int_{\Omega}\! \pi_h \bigg(\!\frac{\rhodLh^{n} - \rhodLh^{n-1}}{\Delta t} \pi_{h}((G_{\delta}^{L})'(\rhodLh^{n})) \!\bigg)\!\dx = &- \int_{\Omega} \nabla \rhodLh^{n} \cdot \nabla \pi_{h}((G_{\delta}^{L})'(\rhodLh^{n}))\! \dx\, \\
&+ \int_{\Omega}  A(x) \nabla \cdLh^{n} \cdot \nabla \rhodLh^{n}\! \dx.
\end{align*}
Next, we bound the left-hand side of this equality from below by using the convexity of $G_{\delta}^{L}$, which implies that
\begin{align*}
   \pi_h \bigg( (\rhodLh^{n} - \rhodLh^{n-1})\,\pi_{h}((G_{\delta}^{L})'(\rhodLh^{n})) \bigg) &=\pi_h \bigg( (\rhodLh^{n} - \rhodLh^{n-1})\,(G_{\delta}^{L})'(\rhodLh^{n}) \bigg)\\&\hspace{-7mm}\geq \pi_{h}\bigg(G_{\delta}^{L}(\rhodLh^{n}) - G_{\delta}^{L}(\rhodLh^{n-1}) \bigg)= \pi_{h}(G_{\delta}^{L}(\rhodLh^{n})) - \pi_{h} (G_{\delta}^{L}(\rhodLh^{n-1})),
\end{align*}
and therefore 
\begin{align} \frac{1}{\Delta t}  \int_{\Omega} (\pi_{h}(G_{\delta}^{L}(\rhodLh)) - \pi_{h} (G_{\delta}^{L}(\rhodLh^{n-1}))) \dx &+ \int_{\Omega} \nabla \rhodLh^{n} \cdot \nabla \pi_{h}((G_{\delta}^{L})'(\rhodLh^{n})) \dx \nonumber \\
& \label{general DiscBound1}  \leq \int_{\Omega} A(x) \nabla \cdLh^{n} \cdot \nabla \rhodLh^{n} \dx. \end{align}
This inequality completes the proof of the first bound \eqref{general DiscBound 1}. 

In this generalised model we have that  the right-hand side of inequality \eqref{general DiscBound1} doesn't fully contribute ``constructively'' to the desired bound. In fact, if one had $\LL=-\Delta$ it would be simple to show that $\int_{\Omega}  \nabla \cdLh^{n} \cdot \nabla \rhodLh^{n} \dx \leq 0$ (see \cite{Fronzoni2025}).   Using Young's inequality and the uniform ellipticity assumption of $\LN$ (recall that $\Lambda_1 |\mathrm{v}|^2 \leq A(x) \mathrm{v} \cdot \mathrm{v} \leq \Lambda_2 |\mathrm{v}|^2$, for all $\mathrm{v} \in \mathbb{R}^d$ and almost every $x \in \Omega$, for some constants $0 < \Lambda_1 \leq \Lambda_2$), we have that, for $\varepsilon>0$,
\begin{align} \label{young inequality grad rho grad c}
   \int_{\Omega} A(x)  \nabla c_{h, \delta, L}^{n} \cdot \nabla \rho_{h, \delta, L}^{n} \dx &\leq \frac{1}{\varepsilon} C \int_{\Omega} |\nabla c_{h, \delta, L}^{n}|^2 \dx + \varepsilon C\int_\Omega |\nabla \rhodLh^n|^2 \dx,
\end{align}
where $C$ stands for a real positive constant, independet of $h$.

We can now us the definition of the bilinear form \eqref{Bilinear From Operator LN}, together with the assumption on the positivity and  uniform ellipticity of the operator $\LN$ to have that 
\begin{align*}
  \int_{\Omega} |\nabla c_{h, \delta, L}^{n}|^2 \dx &\leq C \int_\Omega A(x) \nabla  c_{h, \delta, L}^{n} \cdot \nabla c_{h, \delta, L}^{n} \leq  C \left( \int_\Omega A(x) \nabla  c_{h, \delta, L}^{n} \cdot \nabla c_{h, \delta, L}^{n} \dx + \int_\Omega Q(x) (c_{h, \delta, L}^{n})^2 \dx \right) \\
  &= C a_{\LN}(c_{h, \delta, L}^{n}, c_{h, \delta, L}^{n}) = C \sum_{k=1}^{N_h} \lambda_k^h (c_{h, \delta, L}^{n})_k^2,
\end{align*}
where $(c_{h, \delta, L}^{n})_k = (c_{h, \delta, L}^{n}, \psi_k^h)$.
Therefore, with $(\beta^L(\rho_{h, \delta, L}^{n})^\diamond)_k = (\beta^L(\rho_{h, \delta, L}^{n})^\diamond, \psi_k^h)$, we have 
\begin{align*}
    \int_{\Omega} |\nabla c_{h, \delta, L}^{n}|^2 \dx &\leq C (\LL_{\mathrm{N}, h} c_{h, \delta, L}^{n}, c_{h, \delta, L}^{n}) = C (\LL_{\mathrm{N}, h}^{1-s} (\beta^L(\rhodLh^{n}))^{\diamond}, \LL_{\mathrm{N}, h}^{-s} (\beta^L (\rhodLh^{n}))^{\diamond}) \\
    &= C (\LL_{\mathrm{N}, h}^{1-2s} (\beta^L(\rhodLh^{n}))^{\diamond},  (\beta^L (\rhodLh^{n}))^{\diamond})  \\
    &= C \sum_{k=1}^{N_h} (\lambda_k^h)^{1-2s}(\beta^L(\rho_{h, \delta, L}^{n})^\diamond)_k^2 \leq C (\lambda_1^h)^{1-2s}\sum_{k=1}^{N_h} (\beta^L(\rho_{h, \delta, L}^{n})^\diamond)_k^2\\
    &= C (\lambda_1^h)^{1-2s} \| (\beta^L( \rhodLh^{n}))^{\diamond} \|_{L^2(\Omega)}^2,
\end{align*}
with $C>0$ a real constant, independent of $h$.
By the fact that $s\in (1/2,1)$, $\lambda_1^h \to \lambda_1$ as $h \to 0$ and by the definition of the cut-off function $\beta^L(\cdot)$, one has that, for $h$ small enough there exists a constant, only dependent on $L$ and $\Omega$, that we denote simply by $C(L)$ (absorbing $|\Omega|$ into $C(L)$ and writing $C(L)$ instead of $C(L)|\Omega|$), such that 
\begin{equation} \label{uniform ineq gradient c general}
    \int_{\Omega} |\nabla c_{h, \delta, L}^{n}|^2 \dx \leq C(L) .
\end{equation}

Using Corollary \ref{CorLip} with $\phi_{h} = \rhodLh^{n}$ and $g = (G_{\delta}^{L})'$, noting that $g$ is strictly monotonically increasing and the inverse $g^{-1}$ is Lipschitz continuous with Lipschitz constant $L$, we then have that
\begin{equation} \label{gen bound in L on grad rho}
\frac{1}{L}  \int_{\Omega} |\nabla \rhodLh^{n}|^{2} \dx \leq \int_{\Omega} \nabla \rhodLh^{n} \cdot \nabla \pi_{h}((G_{\delta}^{L})'(\rhodLh^{n})) \dx. 
\end{equation}
Therefore, if we pick $\varepsilon$ in \eqref{young inequality grad rho grad c} such that $\varepsilon C < L^{-1}$ (say $\varepsilon = (2LC)^{-1}$ for example) and we combine \eqref{general DiscBound1}, \eqref{young inequality grad rho grad c}, \eqref{uniform ineq gradient c general}, \eqref{gen bound in L on grad rho} we can sum the bounds (\ref{general DiscBound 1}) over $n = 1, \dots, m$ to deduce that, for each $m\in \{1,\ldots,N\}$,
\begin{align*}
     \int_{\Omega}\pi_{h}(G_{\delta}^{L}(\rhodLh^{m})) \dx + \Delta t \left( \frac{1}{L} - \varepsilon C \right) \sum_{n = 1}^{m} \int_{\Omega} |\nabla \rhodLh^{n}|^{2} \dx &\leq m \Delta t C(L) +  \int_{\Omega}\pi_{h}(G_{\delta}^{L}(\rho^0_h)) \dx\\
     &\leq T C(L) +  \int_{\Omega}\pi_{h}(G_{\delta}^{L}(\rho^0_h)) \dx \\
     & = C(L) +  \int_{\Omega}\pi_{h}(G_{\delta}^{L}(\rho^0_h)) \dx,
\end{align*}
where in the last inequality we have simply absorbed $T$ (the final time $T=N \Delta t $) in the contant $C(L)$ as $T$ is a datum of the problem. 
Let $\Omega_+:= \{ x \in \overline{\Omega}\,:\, \rho^n_{h,\delta,L}\geq 0\}$ and let $\Omega_{-}:= \overline\Omega \setminus \Omega_+$.
We shall further bound the first term on the left-hand side of this inequality from below by noting that, thanks to (\ref{ImpProp1}) and (\ref{PropFE2}), we have 
\begin{align*}
  \int_{\Omega}\pi_{h}(G_{\delta}^{L}(\rhodLh^{n})) \dx &= \int_{\Omega_{+}}    \pi_{h}(G_{\delta}^{L}(\rhodLh^{n})) \dx + \int_{\Omega_{-}} \pi_{h}(G_{\delta}^{L}(\rhodLh^{n})) \dx \\
  &\geq \frac{1}{4L} \int_{\Omega_{+}} \pi_{h}((\rhodLh^{n})^{2})\dx - C(L)|\Omega_+| + \frac{1}{2\delta}\int_{\Omega_{-}} \pi_{h}((\rhodLh^{n})^{2}) \dx \\
  &= \frac{1}{4L} \int_{\Omega} \pi_{h}((\rhodLh^{n})^{2})\dx - C(L)|\Omega_+| + \frac{1}{2\delta}\int_{\Omega_{-}} \pi_{h}((\rhodLh^{n})^{2}) \dx \\
  & \quad - \frac{1}{4L} \int_{\Omega_{-}} \pi_{h}((\rhodLh^{n})^{2})\dx\\
  &\geq \frac{1}{4L} \int_{\Omega} (\pi_{h}\rhodLh^{n})^{2} \dx + \frac{1}{4\delta} \int_{\Omega} \pi_{h}([\rhodLh^{n}]_{-}^{2}) \dx - C(L)|\Omega| \\
  &= \frac{1}{4L} \int_{\Omega} (\rhodLh^{n})^{2} \dx + \frac{1}{4\delta} \int_{\Omega} \pi_{h}([\rhodLh^{n}]_{-}^{2}) \dx - C(L)|\Omega|, 
\end{align*}
where in the transition to the fourth line we have used that $\delta <L$ and that $[\rhodLh^{n}]_{-}^{2}\equiv 0$ on $\Omega_+$. It remains to note that, because for all $\delta$ and $L$ such that $0<\delta<1<L$ we have that
\[ 0 \leq G^L_\delta(s) \leq \max\bigg\{1 - \frac{\delta}{2}, \frac{s^2-L^2}{2L} + (\log L - 1)s + 1\bigg\}, \quad \mbox{for all $s \geq 0$},\]
it follows from \eqref{inibound} that $0 \leq G^L_\delta(\rho^0_{h,\delta, L}) \leq C$, where $C=C(L)$ is a positive constant, independent of $h$ and $\delta$, whereby the same is true of  $\int_\Omega \pi_h(G^L_\delta(\rho^0_{h,\delta, L})) \dx$. As $\Omega$ is a fixed domain, we shall absorb $|\Omega|$ into $C(L)$ and write $C(L)$ instead of $C(L)|\Omega|$.
It follows from \eqref{inibound} that $0 \leq G^L_\delta(\rho^0_{h,\delta, L}) \leq C$, where $C=C(L)$ is a positive constant, independent of $h$ and $\delta$, whereby the same is true of  $\int_\Omega \pi_h(G^L_\delta(\rho^0_{h,\delta, L})) \dx$. From this,  the inequality (\ref{general DiscBound 2}) directly follows.

\end{proof}

We shall now prove an existence result that will ensure that a solution to the fully discrete problem \eqref{general FullDiscWeakForm} at time step $t_n$ exists, if we are provided with a solution at the previous time step $t_{n-1}$. We will prove the result using Brouwer's fixed point theorem. 

\begin{lemma} \label{general DiscExistLemma1}
For any $\Delta t > 0$, for $h$ small enough, given $\rhodLh^{n-1} \in V_{h}$, there exists at least one solution $(\rhodLh^{n}, c_{h, \delta, L}^{n}) \in V_{h} \times (V_{h} \cap L^{2}_{\diamond}(\Omega))$ to \eqref{general FullDiscWeakForm}.
\begin{proof}
 We begin by equipping $V_h$ with the inner product $((\cdot,\cdot))$, defined by
\[ ((\psi_h, \varphi_h)):= \int_\Omega \pi_h (\psi_h(x) \phi_h(x)) \dx,\quad \phi_h, \varphi_h \in V_h.\]
Next, we define the function $\mathcal{H}: V_{h} \to V_{h}$ such that, for any $\rho_{h} \in V_{h}$,
\[ ((\mathcal{H}(\rho_{h}), \phi_{h})) := \int_{\Omega} \pi_h\bigg(\frac{\rho_{h} - \rhodLh^{n-1}}{\Delta t} \phi_{h} \bigg)\dx + \int_{\Omega} \nabla \rho_{h} \cdot \nabla \phi_{h}\dx - \int_{\Omega} \Theta_{\delta}^{L}(\rho_{h}) A(x) \nabla c_{h} \cdot \nabla \phi_{h} \dx \quad \textrm{for all } \phi_{h} \in V_{h},  \] 
where $c_h \in V_h \cap L^2_\diamond(\Omega)$ satisfies $-\LL_{\mathrm{N},h}^{\s} c_{h} = (\beta^L( \rhodLh^{n}))^{\diamond}$. The construction of $\Theta_{\delta}^{L}$, together with  Lemma \ref{ThetaContLemma}, ensures that $\mathcal{H}$ is a continuous mapping. If a solution $\rhodLh^{n}$ to (\ref{general FullDiscWeakForm}) exists then it is a zero of $\mathcal{H}$, namely
\[ ((\mathcal{H}(\rhodLh^{n}), \phi_{h})) = 0 \quad \textrm{for all } \phi_{h} \in V_{h}. \] 

We shall therefore prove that $\mathcal{H}$ has a zero. 
For contradiction, let us assume that $\mathcal{H}$ has no zero for any $\gamma \in \mathbb{R}_{>0}$ in the ball 
$B_{\gamma} = \{ \psi_{h} \in V_{h} \textrm{ such that } \|\psi_{h}\|_{L^{2}(\Omega)} \leq \gamma \}$. We define the function $\mathcal{E}_{\gamma}: B_{\gamma} \to B_{\gamma}$ by
\[ \mathcal{E}_{\gamma}(\psi_{h}) = - \gamma \frac{\mathcal{H}(\psi_{h})}{\| \mathcal{H}(\psi_{h})\|_{L^{2}(\Omega)}}. \]

By Brouwer's fixed point theorem $\mathcal{E}_{\gamma}$ has at least one fixed point $\rho_{h}^{\gamma}$ in $B_{\gamma}$, and this means that $\|\rho_{h}^{\gamma}\|_{L^{2}(\Omega)} = \|\mathcal{E}_{\gamma}(\rho_{h}^{\gamma})\|_{L^{2}(\Omega)} = \gamma$. Let $c_{h}^{\gamma}$ be the solution of the equation $-\LL_{\mathrm{N},h}^{\s} c_{h} = (\beta^L( \rho_h^\gamma))^{\diamond}$ in $\Omega$. Choosing $\phi_{h} = \pi_{h}((G_{\delta}^{L})'(\rho_{h}^{\gamma}))$ as test function, by (\ref{ImpProp1}) and inequality \eqref{PropFE2}, together with the trivial equality $\pi_h \rho^\gamma_h = \rho^\gamma_h$, we have that
\begin{align*}
 ((\mathcal{H}(\rho_{h}^{\gamma}), \pi_{h}((G_{\delta}^{L})'(\rho_{h}^{\gamma})))) 
  &\leq -  \frac{\|\mathcal{H}(\rho_{h}^{\gamma})\|_{L^{2}(\Omega)}}{\gamma} \bigg( \int_{\Omega} \bigg( \frac{(\rho_{h}^{\gamma})^{2}}{4L} - C(L) \bigg) \dx \bigg) \\ &=- \frac{\|\mathcal{H}(\rho_{h}^{\gamma})\|_{L^{2}(\Omega)}}{\gamma} \Big( \frac{\gamma^{2}}{4L} -  C(L) |\Omega| \Big),
 \end{align*}
 and thus for $\gamma  > [4L\, C(L)\, |\Omega|]^{1/2}$ we also have that 
 \begin{equation*}   
 ((\mathcal{H}(\rho_{h}^{\gamma}), \pi_{h}(G_{\delta}^{L})'(\rho_{h}^{\gamma}))) < 0. \end{equation*}
 
 On the other hand, using the same argument that we applied to obtain \eqref{general DiscBound 1}, we deduce that
 \begin{align}
 ((\mathcal{H}(\rho_{h}^{\gamma}), \pi_{h}(G_{\delta}^{L})'(\rho_{h}^{\gamma}))) &\geq \frac{1}{\Delta t} \int_{\Omega} \pi_h\big((\rho_{h}^{\gamma} - \rhodLh^{n-1}) \pi_{h} ((G_{\delta}^{L})'(\rho_{h}^{\gamma})) \big)\dx + \frac{1}{L}  \int_{\Omega} |\nabla \rhodLh^{n}|^{2} \dx \nonumber \\
 & \hspace{5mm} - \int_{\Omega} A(x) \nabla c_{h}^{\gamma} \cdot \nabla \rho_{h}^{\gamma} \dx.\label{general BrouwerEqRef1}
 \end{align}
For the first term on the right-hand side of \eqref{general BrouwerEqRef1} we can use the convexity of $G^L_\delta$ to deduce, that 
 \begin{align*}
 \frac{1}{\Delta t} \int_{\Omega} \pi_h\big((\rho_{h}^{\gamma} - \rhodLh^{n-1}) \pi_{h} ((G_{\delta}^{L})'(\rho_{h}^{\gamma})) \big)\dx &\geq \frac{1}{\Delta t}  \int_\Omega \left( \pi_h(G_\delta^L (\rho_h^\gamma ) - \pi_h (G_\delta^L(\rho_h^{n-1}))\right) \dx \\ 
 &\geq \frac{1}{\Delta t} \Big( \frac{\gamma^{2}}{4L} - C_1(L) |\Omega| \Big) - \frac{1}{\Delta t} \int_{\Omega} \pi_{h}( G_{\delta}^{L}(\rho_{h}^{n-1}) )\dx,
 \end{align*}
 for a constant $C_1(L)>0$, depending on $L$.
For the second and third term on the right-hand side of \eqref{general BrouwerEqRef1} we proceed with a strategy similar to the one applied in the proof of Lemma \ref{general Fully disc uniform estimates}. Using Young's inequality we have 
\begin{equation*}
   \frac{1}{L}  \int_{\Omega} |\nabla \rhodLh^{n}|^{2} \dx \nonumber  - \int_{\Omega} A(x) \nabla c_{h}^{\gamma} \cdot \nabla \rho_{h}^{\gamma} \dx \geq \left( \frac{1}{L} - {\varepsilon}C  \right) \int_\Omega |\nabla \rhodLh^n| \dx - \frac{1}{\varepsilon} C\int_{\Omega} |\nabla c_{h, \delta, L}^{n}|^2, 
\end{equation*}
for any $\varepsilon>0$ and some $C>0$ and then, choosing $\varepsilon = \frac{1}{2LC} $, by the definition of $c_h$ as a solution of $-\LL_{\mathrm{N},h}^{\s} c_{h} = (\beta^L( \rhodLh^{n}))^{\diamond}$, together with the computations in the proof of Lemma \ref{general Fully disc uniform estimates}, used to show \eqref{uniform ineq gradient c general}, we have 
\begin{equation*}
     \frac{1}{L}  \int_{\Omega} |\nabla \rhodLh^{n}|^{2} \dx   - \int_{\Omega} A(x)  \nabla c_{h}^{\gamma} \cdot \nabla \rho_{h}^{\gamma} \dx \geq \frac{1}{2L} \int_{\Omega} |\nabla \rhodLh^{n}|^{2} \dx - C_2(L),
\end{equation*}
for a constant $C_2(L)$, depending on $L$.
 
Therefore,
\begin{align*}
 ((\mathcal{H}(\rho_{h}^{\gamma}), \pi_{h}((G_{\delta}^{L})'(\rho_{h}^{\gamma})))) &\geq \frac{1}{\Delta t} \Big( \frac{\gamma^{2}}{4L} - C_1(L) |\Omega| \Big) - \frac{1}{\Delta t} \int_{\Omega} \pi_{h} (G_{\delta}^{L}(\rho_{h}^{n-1})) \dx - C_2(L).
\end{align*}
Thus, for $\gamma  > [4L\, (C_1(L)\, |\Omega| + \| \pi_{h} (G_{\delta}^{L}(\rho_{h}^{n-1}))\|_{L^{1}(\Omega)}) + \Delta t C_2(L)]^{1/2}$ we have 
\[  ((\mathcal{H}(\rho_{h}^{\gamma}), \pi_{h}((G_{\delta}^{L})'(\rho_{h}^{\gamma})))) > 0, \]
and hence we have arrived at a contradiction;  therefore the function $\mathcal{H}$ has a zero, which means that at time $t_{n}$ there exists a solution $\rhodLh^{n}\in V_h$ to the fully discrete scheme (\ref{general FullDiscWeakForm}). Having shown the existence of $\rho_{h, \delta, L}^{n} \in V_{h}$, the existence of an associated (unique) $c_{h, \delta, L}^{n} \in V_{h} \cap L^{2}_{\diamond}(\Omega)$ follows from \eqref{general SpaceDiscFracPois}.
\end{proof}
\end{lemma}

\subsection{Passage to the limit $\delta, h \to 0_+$ }

We need a result on the convergence in space of the finite element approximation of $\LN$ \eqref{Discrete FracPowerOp}. We refer to \cite[Theorem 4.3]{bonito2015numerical} and  \cite[Theorem 6.2]{bonito2017numerical} for details and we report here the general statement applicable to our case.

\begin{prop}[Convergence of the spatial discretization of the fractional powers of $\LN$] \label{DiscFracOpConv}
Let $\alpha \in (0, 1]$ be the elliptic regularity order of the operator $\LN.$
Suppose that $s \in (1/2,1)$, $\sigma \in [0,1]$ and $f \in \mathcal{H}^{2\sigma}_{\diamond}(\Omega)$. Then, there exists a positive constant $C$ that is independent of $h$ and $\sigma$ such that 
\begin{equation*} 
    \|\LN^{-\s}f - \LL_{\mathrm{N},h}^{-\s} \pi_{h}f\|_{L^{2}(\Omega)} \leq C \varepsilon(h) \|f\|_{\mathcal{H}^{2\sigma}(\Omega)} \quad \textrm{for all } f \in \mathcal{H}^{2\sigma}_{\diamond}(\Omega),
\end{equation*}
where 
\begin{equation*} 
\varepsilon(h) = \left\{ \begin{array}{ll} 
h^{2\alpha} & \text{if } \alpha \geq s \textrm{ and } s + \sigma > \alpha, \\
h^{2\alpha} & \text{if } \alpha < s,
\\ \log(h^{-1}) h^{2(s+\sigma)} & \textrm{if } \s + \sigma < \alpha.
\end{array} \right.
\end{equation*}
\end{prop}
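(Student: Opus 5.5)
This is the Bonito--Pasciak estimate for the spectral finite element approximation of fractional powers of a symmetric coercive operator. I would reproduce that argument in our quotient setting rather than invoke it as a black box.

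\emph{Balakrishnan representation.} The starting point is the integral formula, valid for $0<s<1$ on $L^2_\diamond(\Omega)$,
\begin{equation*}
\LN^{-s} f = \frac{\sin(\pi s)}{\pi}\int_0^\infty \mu^{-s}(\mu I+\LN)^{-1} f \,\mathrm{d}\mu .
\end{equation*}
The analogous formula holds for $\LNh^{-s}$ with $(\mu I+\LNh)^{-1}P_h$, where $P_h$ is the $L^2$ projection onto $V_h\cap L^2_\diamond(\Omega)$. Both formulas follow directly from the eigen-expansions \eqref{FracOpeigen} and \eqref{Discrete FracPowerOp}, since $\frac{\sin\pi s}{\pi}\int_0^\infty \mu^{-s}(\mu+\lambda)^{-1}\,\mathrm{d}\mu=\lambda^{-s}$.

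The error $\LN^{-s}f-\LNh^{-s}\pi_h f$ then splits into two pieces:
\begin{itemize}
\item $\LN^{-s}f-\LNh^{-s}P_hf$, the genuine discretization error;
\item $\LNh^{-s}(P_hf-\pi_hf)$, a data-approximation term.
\end{itemize}
For the second piece, $\LNh^{-s}$ is bounded on $L^2$ uniformly in $h$ because $\lambda_1^h\ge\lambda_1>0$ (min--max). Standard interpolation estimates then give a bound of the form $Ch^{2\sigma}\|f\|_{\mathcal H^{2\sigma}}$. This term needs some care when $\sigma$ is small, where $\pi_h$ is not defined on $L^2$. I would either read the statement with $P_h$, or use a negative-norm duality argument against $\LNh^{-s}$ to gain extra powers of $h$.

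\emph{Resolvent estimates.} For the first piece, set $e_\mu=(\mu+\LN)^{-1}f-(\mu+\LNh)^{-1}P_hf$. This is the Galerkin error for the coercive problem $\mu(u,v)+a_{\LN}(u,v)=(f,v)$, and I will establish $\mu$-explicit bounds for it.
\begin{itemize}
\item \emph{Energy estimate:} Céa's lemma in the norm $\mu\|\cdot\|^2+a_{\LN}(\cdot,\cdot)$, combined with the elliptic regularity of order $\alpha$ (the map $T_{\LN}$ sends $\mathcal H^{-1+r}$ into $\mathcal H^{1+r}$) and interpolation.
\item \emph{$L^2$ estimate:} an Aubin--Nitsche duality argument giving $\|e_\mu\|_{L^2}\le C h^{2\alpha}\|(\mu+\LN)^{-1}\|_{\dots}$-type bounds. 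Interpolating these against the $\sigma$-smoothness of $f$ yields
\begin{equation*}
\|e_\mu\|\le C\min\{(1+\mu)^{-1}, h^{2\alpha}\mu^{\alpha-\ldots}\}\|f\|_{\mathcal H^{2\sigma}}.
\end{equation*}
\end{itemize}

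\emph{Integration in $\mu$.} Next I integrate $\mu^{-s}\|e_\mu\|$ over $(0,\infty)$, splitting the range at $\mu\sim h^{-2}$.
\begin{itemize}
\item For $\mu\le h^{-2}$, the regularity-based bound holds, and the $\mu$-integral converges provided $s+\sigma>\alpha$. This gives $h^{2\alpha}$.
\item For $\mu>h^{-2}$, the crude stability bound $\|e_\mu\|\lesssim \mu^{-1-\sigma}\|f\|_{\mathcal H^{2\sigma}}$ contributes $h^{2(s+\sigma)}$. This is harmless when $s+\sigma\ge\alpha$, and it is the dominant rate when $s+\sigma<\alpha$.
\item When $s+\sigma<\alpha$, the low-$\mu$ integrand behaves like $\mu^{-1}$ in the transition region, which produces the $\log(h^{-1})$ factor.
\end{itemize}
The case $\alpha<s$ follows because then $s+\sigma>\alpha$ automatically.

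\emph{Main obstacle.} The hardest step is obtaining resolvent error bounds that are uniform and sharp in $\mu$ under only order-$\alpha$ regularity. A further wrinkle is the quotient space: in case \ref{(K1)}, $\LN$ is coercive only on $L^2_\diamond$, so the projections and the duality argument must respect the mean-zero constraint. I would handle this by working throughout in $L^2_\diamond$, noting that $P_h$ preserves zero mean because constants lie in $V_h$. Elsewhere I would follow \cite[Theorem 4.3]{bonito2015numerical} and \cite[Theorem 6.2]{bonito2017numerical} closely.
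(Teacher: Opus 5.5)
Your strategy (Balakrishnan's formula, $\mu$-explicit Galerkin bounds for the resolvent, and a split of the $\mu$-integral at $\mu\sim h^{-2}$) is the argument behind \cite[Theorem 4.3]{bonito2015numerical}, to which the paper defers without giving a proof. However, the either/or you leave open for the data term cannot stay open.

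\emph{The data term.} In \cite{bonito2015numerical}, $\pi_h$ is the $L^2(\Omega)$-orthogonal projection, not the nodal interpolant that $\pi_h$ denotes in this paper. With the nodal interpolant the proposition is false whenever $\sigma<\alpha$; for example $\alpha=1$, $s=3/4$, $\sigma=2/5$ falls in the first case, where $h^{2}$ is claimed. To see this, let $f$ be a sum of $h$-scale bumps supported in the interiors of the elements. Match their signs to those of an eigenfunction $\varphi_k$ with $\lambda_k>0$, and balance them so that $f$ has zero mean in case (K1). Then $\pi_h f=0$, so the discrete solution vanishes, while $\|\LN^{-s}f\|_{L^2(\Omega)}\ge\lambda_k^{-s}|(f,\varphi_k)|\gtrsim h^{2\sigma}\|f\|_{\mathcal H^{2\sigma}}$. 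Hence no negative-norm duality argument can push $\LNh^{-s}(P_hf-\pi_hf)$ below order $h^{2\sigma}$ in general. Your first option is the only viable one: with $P_h$ the data term vanishes, and $(\mu+\LNh)^{-1}P_hf$ is exactly the Galerkin approximation of $(\mu+\LN)^{-1}f$.

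\emph{The $\mu$-integration.} Two steps here are also wrong as written.

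First, the large-$\mu$ bound $\|e_\mu\|\lesssim\mu^{-1-\sigma}\|f\|_{\mathcal H^{2\sigma}}$ is false: for $f\notin V_h$, $\mu e_\mu\to f-P_hf\neq0$ as $\mu\to\infty$. Use instead the identity $e_\mu=\mu^{-1}(f-P_hf)-\mu^{-1}\LN(\mu+\LN)^{-1}f+\mu^{-1}\LNh(\mu+\LNh)^{-1}P_hf$. It gives $\|e_\mu\|\lesssim\mu^{-1}(h^{2\sigma}+\mu^{-\sigma})\|f\|_{\mathcal H^{2\sigma}}$, which still integrates to $h^{2(s+\sigma)}$ over $\mu>h^{-2}$. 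The price is the discrete stability $\|\LNh^{\sigma}P_hf\|\lesssim\|f\|_{\mathcal H^{2\sigma}}$. This holds on quasi-uniform meshes for $2\sigma\le1$, which is enough because $s>1/2$ lets you replace $\sigma$ by $\min\{\sigma,1/2\}$ without changing the case.

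Second, the missing exponent in your resolvent bound is $\|e_\mu\|\lesssim h^{2\alpha}\mu^{\alpha-1-\sigma}\|f\|_{\mathcal H^{2\sigma}}$ for $1\le\mu\le h^{-2}$. So the integrand there is $h^{2\alpha}\mu^{\gamma-1}$ with $\gamma=\alpha-s-\sigma$.
\begin{itemize}
\item For $s+\sigma<\alpha$ this gives $h^{2(s+\sigma)}/\gamma$ with no logarithm.
\item The integrand behaves like $\mu^{-1}$ only when $s+\sigma=\alpha$, a case the printed statement omits.
\end{itemize}
The stated third case is nonetheless true. Its logarithm is exactly what makes the constant independent of $\sigma$, since $(h^{-2\gamma}-1)/\gamma\le 2\log(h^{-1})h^{-2\gamma}$. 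By the same computation, your argument gives a constant of order $(s+\sigma-\alpha)^{-1}$ in the first case. It therefore does not deliver the asserted $\sigma$-independence there.
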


\begin{theo} \label{general convergenceFEMTheo}
The initial data $\{ \rho^0_h \}_{h > 0} $ defined in \eqref{InValDiscProb} are such that, for $\Delta t$ and $L$ fixed, as $h \to 0_{+}$ we have that
\begin{equation} \label{general Conv0}
    \rho^0_h \to \rho^{0}_{L} = \rho^0 := \rho^0(\Delta t) \quad \text{strongly in } L^{2}(\Omega). 
\end{equation}
    Furthermore there exists a subsequence of $\{ \rhodLh^{n} \}_{\delta, h >0}$, a nonnegative function $\rho_{L}^{n} \in V=H^1(\Omega)$ and $c_{L}^{n} \in H^{1}_{\diamond}(\Omega)$ such that the following convergence results hold, for each $n \in \{1,\ldots,N\}$, as $\delta, h \to 0_{+}$:
    \begin{subequations} \label{general Convergence}
        \begin{alignat}{2}
            \label{general Conv2}
            \nabla \rhodLh &\to \nabla \rho_{L}^{n} &&\quad\text{weakly in } L^{2}(\Omega;\mathbb{R}^d), \\
            \label{general Conv3}
            \rhodLh^{n} &\to \rho_{L}^{n} &&\quad\text{strongly in } L^{2}(\Omega), \\
            \label{general Conv4}
            \Theta_{\delta}^{L}(\rhodLh^{n}) &\to \beta^{L}(\rho^{n}_{L}) I&&\quad \text{strongly in } L^{2}(\Omega;\mathbb{R}^{d\times d}), \\
            \label{general Conv5}
            \cdLh^{n} &\to c_{L}^{n} &&\quad \text{strongly in } L^{2}_\diamond(\Omega;\mathbb{R}^d), \\
            \label{general Conv6}
              \nabla \cdLh^{n} &\to  \nabla c_{L}^{n} &&\quad \text{weakly in }  L^{2}_\diamond(\Omega;\mathbb{R}^d).
        \end{alignat}
    \end{subequations}
    Moreover $\{ \rho^{n}_{L} \}_{n=1, \dots, N}$ solves the problem \eqref{general WeakFormDisctime},  $\rho^{n}_{L} \geq 0$ a.e.~on $\Omega$ for all $n=1, \dots, N$, and, given $\rho^{0}_L=\rho^0$ such that $\frac{1}{|\Omega|} \int_{\Omega} \rho^{0} \dx = 1$, one has $\frac{1}{|\Omega|} \int_{\Omega} \rho_{L}^{n} \dx = 1$ for all $n = 1, \dots, N$.
\begin{proof}
    The proof of \eqref{general Conv0} follows the same way as that of the analogous result in the proof of \cite[Theorem 3.11]{Fronzoni2025}, for our choice of the initial datum $\rho_h^0$ for \eqref{general FullDiscWeakForm}. 
    
    Now, let $n \in \{1,\ldots,N\}$. The weak convergence \eqref{general Conv2} and the strong convergence \eqref{general Conv3} are implied by the inequality  \eqref{general DiscBound 2}. The nonnegativity of the limit function $\rho_{L}^{n}$ follows from \eqref{general Conv3} and the second bound in \eqref{general DiscBound 2}.
    
     Combining the second and third bounds in \eqref{general DiscBound 2} and Lemma \ref{LemmaMatrix} we have the result \eqref{general Conv4}.  
        
        The convergence result \eqref{general Conv5} can be proved in the following way. We have, by use of the stability inequality \eqref{general StabFracNeu} satisfied by the solution of the fractional Poisson equation, that
        \begin{align*}
&\|\LN^{-\s} (\beta^L(\rho_{L}^{n}))^{\diamond} - \LL_{\mathrm{N}, h}^{-\s} (\beta^L(\rhodLh^{n}))^{\diamond}\|_{L^{2}(\Omega)}\\ &\quad \leq
            \| \LN^{-\s}((\beta^L(\rho_{L}^{n}))^{\diamond} - (\beta^L(\rhodLh^{n}))^{\diamond})\|_{L^{2}(\Omega)}
            + \| (\LN^{-\s} - \LL_{\mathrm{N},h}^{-\s})(\beta^L(\rhodLh^{n}))^{\diamond}\|_{L^{2}(\Omega)} \\
            &\quad \leq C \|(\beta^L(\rho_{L}^{n}))^{\diamond} - (\beta^L(\rhodLh^{n}))^{\diamond}\|_{L^{2}(\Omega)} + \|(\LN^{-\s} - \LL_{\mathrm{N},h}^{-\s})(\beta^L(\rhodLh^{n}))^{\diamond}\|_{L^{2}(\Omega)},
        \end{align*}
        and by \eqref{general Conv3}, continuity of $\beta^L(\cdot)$ and the convergence result stated in Proposition \ref{DiscFracOpConv} we have \eqref{general Conv5}; here and henceforth $C$ signifies a generic positive constant, independent of $\delta$ and $h$.

        To prove \eqref{general Conv6} we need a uniform bound on the gradient of $\cdLh^{n}$. This fact, under the assumption of $s \in (1/2, 1)$ and using the equality \eqref{general SpaceDiscFracPois} is a direct consequence, as shown with \eqref{uniform ineq gradient c general} in the proof of Lemma \ref{general FullDiscWeakFormMain}.
Having shown that $\nabla c^n_{h,\delta,L}$ is uniformly bounded in $L^2(\Omega;\mathbb{R}^d)$ as $h, \delta \rightarrow 0_+$, the weak convergence result \eqref{general Conv6} follows from \eqref{general Conv5} thanks to the uniqueness of the weak limit. 
        
        We combine \eqref{general Conv2}--\eqref{general Conv4} to pass to the limit as $\delta, h \to 0_{+}$ in \eqref{general FullDiscWeakForm} with $\phi_{h}=\pi_{h} \phi$, for $\phi \in C^{\infty}(\overline{\Omega})$ together with the strong convergence of $\pi_h \phi$ to $\phi$ in the norm of $W^{1,\infty}(\Omega)$ as $h \rightarrow 0_+$
        (cf., for example, inequality (4.4.29) in \cite{BreSco94} with $s=1$, $m=2$, $l=1$, $r=0$ and $p=\infty$ there), to obtain equation \eqref{general WeakFormDisctime}. The fact that $C^{\infty}(\overline{\Omega})$ is dense in $V=H^1(\Omega)$ then concludes the argument. The nonnegativity of $\rhodLh^{n}$ on $\overline\Omega$ implies the nonnegativity of $\rhoL^{n}$ a.e.~on $\Omega$ for $n=1, \dots, N$. Finally, we note that if we choose $\phi\equiv 1$ as a test function in \eqref{general WeakFormDisctime} we have conservation of mass, i.e., $\int_\Omega \rho^n_L(x) \dx = \int_\Omega \rho^0 \dx = 1$ for $\rhoL^{n}$, $n=1, \dots, N$. 
    \end{proof}
\end{theo}

\section{The semidiscrete-in-time approximation} \label{sec:4}

We recall that $N \in \mathbb{N}_{\geq 2}$ and $\Delta t := T/N$. We define
\begin{subequations}
    \label{LinInterptime}
\begin{equation}
    \rhoL^{\Delta t}(\cdot, t) := \frac{t - t_{n-1}}{\Delta t} \rhoL^{n}(\cdot) + \frac{t_{n} - t}{\Delta t } \rhoL^{n-1}(\cdot), \quad t \in [t_{n-1}, t_{n}], \quad n = 1, \ldots, N,
\end{equation}
that is, the continuous piecewise affine interpolant in time of the sequence of discrete-in-time approxi\-mations $\{ \rhoL^{n} \}_{n =1, \ldots, N}$, in conjunction with the notation 
\begin{equation}
    \rhoL^{\Delta t, +}(\cdot, t) := \rhoL^{n}(\cdot), \quad \rhoL^{\Delta t, -}(\cdot, t) := \rhoL^{n-1}(\cdot), \quad t \in (t_{n-1}, t_{n}], \quad n =1, \ldots, N.
\end{equation}
\end{subequations}
We shall adopt $\rhoL^{\Delta t (,\pm)}$ as a collective symbol for $\rhoL^{\Delta t}$, $\rhoL^{\Delta t, \pm}$.

We can write, \eqref{general WeakFormDisctime} summed through $n = 1, \ldots, N$ as 
\begin{subequations} \label{general WeakFormSemiDiscTimeLMain}
\begin{gather}
    \text{Find } \rhoL^{\Delta t}(\cdot,t) \in V =H^1(\Omega)\, \mbox{ for } t\in (0, T], \text{ such that } \nonumber \\
    \int_{0}^{T} \int_{\Omega} \frac{\partial \rhoLDt}{\partial t} \phi \dx = - \int_{0}^{T} \int_{\Omega}  \nabla \rhoLDtp \cdot \nabla \phi \dx + \int_{0}^{T} \int_{\Omega} \beta^{L}(\rhoLDtp) A(x) \nabla c_{L}^{\Delta t, +} \cdot \nabla \phi \dx  \label{general WeakFormSemiDiscTimeL} \\ \quad \!\!\text{for all } \phi \in L^1(0,T;V),  \end{gather}
    subject to the initial condition $\rho_{L}^{\Delta t}(x, 0) = \rho^{0}(x)$ for a.e.~$x \in \Omega$, where $c_{L}^{\Delta t, +}(\cdot,t) \in \mathcal{H}^1_\diamond(\Omega)$ for all $t \in (0,T]$ and satisfies
    \begin{equation} \label{general FracPoiDiscTime} -\LN^{\s} c_{L}^{\Delta t, +}(\cdot,t) = (\beta^L(\rhoLDtp))^\diamond(\cdot,t)\quad \mbox{in $\Omega$},
\end{equation}
\end{subequations}
for all $t \in (0,T]$, and $\beta^L(s):=\min\{s,L\}$ for $s \in \mathbb{R}$.

We remark that, also in this more general setting, since $(\rhoL^{\Delta t})^{\diamond}$ is equal either to $\rhoL^{\Delta t}$ (in case $Q$ is not a.e. identically zero on $\Omega$)  or to $(\rhoL^{\Delta t})^{\diamond} = \rhoL^{\Delta t} - \int_\Omega \rhoL^{\Delta t} \dx$ (in case $Q$ is a.e. identically zero on $\Omega$), we have that
\begin{equation*}
  (\rhoL^{\Delta t})^{\diamond}(\cdot,t)  := \rhoLDt - \frac{1}{|\Omega|}\int_{\Omega} \rhoL^{\Delta t} \dx = \frac{t - t_{n-1}}{\Delta t} (\rhoL^{n})^{\diamond}(\cdot) + \frac{t_{n} - t}{\Delta t } (\rhoL^{n-1})^{\diamond}(\cdot), \quad t \in [t_{n-1}, t_{n}], \quad n= 1 \ldots, N,
\end{equation*}
and therefore we have 
\begin{equation} \label{general PropPseudoTimeDerAverage}
    \frac{\partial (\rhoLDt)^{\diamond}}{\partial t}(\cdot, t) = \frac{(\rhoLDtp)^{\diamond}(\cdot,t) - (\rhoLDtm)^{\diamond}(\cdot,t)}{\Delta t}= \frac{\partial \rho_L^{\Delta t}}{\partial t}(\cdot,t), \quad t \in (t_{n-1}, t_{n}], \quad n= 1 \ldots, N. 
\end{equation}

\begin{remark} \label{uniform control gradient c with large s}
We first observe that, in this case, we can still  obtain a bound on the $H^1(\Omega)$ seminorm of $c_{L}^{\Delta t, +}$, dependent on $\rhoLDtp$.
    We can do this using the definition of the bilinear form associated to $\LN$ \eqref{Bilinear From Operator LN}, together with the restriction on the fractional order $s\in(1/2, 1)$ and the assumption on the positivity and  uniform ellipticity of the operator $\LN$. In fact, we have 
\begin{align*}
    \int_{\Omega} |\nabla c_{L}^{\Delta t, +}|^2 \dx &\leq C \int_\Omega A(x) \nabla c_{L}^{\Delta t, +} \cdot \nabla c_{L}^{\Delta t, +} \dx \leq C \left( \int_\Omega A(x) \nabla  c_{L}^{\Delta t, +} \cdot \nabla c_{L}^{\Delta t, +} \dx + \int_\Omega Q(x) (c_{L}^{\Delta t, +})^2 \dx \right) \\ & \leq C (\LL_{\mathrm{N}} c_{L}^{\Delta t, +}, c_{L}^{\Delta t, +}) = C (\LL_{\mathrm{N}}^{1-s} (\beta^L(\rhoLDtp))^\diamond, \LL_{\mathrm{N}}^{-s} (\beta^L(\rhoLDtp))^\diamond) \\
    &= C (\LL_{\mathrm{N}}^{1-2s} (\beta^L(\rhoLDtp))^\diamond,  (\beta^L(\rhoLDtp))^\diamond)  \\
    &= C \sum_{k=1}^{\infty} \lambda_k^{1-2s}(\beta^L(\rhoLDtp))^\diamond)_k^2 \leq C \lambda_1^{1-2s}\sum_{k=1}^{N_h} (\beta^L(\rhoLDtp))^\diamond)_k^2\\
    &= C \lambda_1^{1-2s} \| (\beta^L( \rhoLDtp))^{\diamond} \|_{L^2(\Omega)}^2,
\end{align*}
where $C$ stands for a positive constant independent of $L$ and $\Delta t$. Therefore for a positive constant $C>0$ we have that  
\begin{equation} \label{uniform ineq gradient c general Disctime}
    \int_{\Omega} |\nabla c_{L}^{\Delta t, +}|^2 \dx \leq C \lambda_1^{1-2s} L |\Omega| .
\end{equation}
\end{remark}

As we anticipated, we want to make \eqref{general WeakFormSemiDiscTimeLMain} independent of the parameter $L$.
In order to do this, we shall now prove a uniform bound in time on the $L^{\infty}(\Omega)$ norm of the solution $\rhoLDt$. This property will allow us to eliminate the parameter $L$. The result will be a consequence of the assumptions that we have made for the operator $\LL$, in particular the fact that the operator has been assumed to be sub-Markovian and hence $L^\infty$-contractive.

\begin{lemma} \label{general InfNormDecayDiscTime}
Let $\rhoLDtpm$ be defined as in \eqref{LinInterptime}. The following bound holds true
\begin{equation} \label{general InfNormBoundDiscTime}
  \sup_{t \in (0, T)} \| \rhoLDtpm(t) \|_{L^{\infty}(\Omega)} \leq \| \rho_{0}\|_{L^{\infty}(\Omega)}. 
\end{equation}
\end{lemma}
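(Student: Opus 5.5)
We argue by induction on $n$: we show that $\|\rho_L^n\|_{L^\infty(\Omega)}\le M:=\|\rho_0\|_{L^\infty(\Omega)}$ for every $n=0,1,\dots,N$. Since $\rhoLDt(\cdot,t)$ is a convex combination of $\rho_L^{n-1}$ and $\rho_L^n$, and $\rhoLDtpm$ take the values $\rho_L^{n}$ or $\rho_L^{n-1}$, the bound \eqref{general InfNormBoundDiscTime} then follows at once. Nonnegativity of $\rho_L^n$ is already known from Theorem \ref{general convergenceFEMTheo}, so only the upper bound is needed.

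\emph{Base case.} For $\rho^0$ defined by \eqref{InValSemiDiscProb}, test with $\phi=(\rho^0-M)_+\in V$. Since $\int_\Omega(\rho_0-M)(\rho^0-M)_+\dx\le 0$, we obtain
\[\|(\rho^0-M)_+\|_{L^2(\Omega)}^2+\Delta t\,\|\nabla(\rho^0-M)_+\|_{L^2(\Omega)}^2\le 0,\]
and hence $\rho^0\le M$.

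\emph{Inductive step.} Assume $\rho_L^{n-1}\le M$. Test \eqref{general WeakFormDisctime} with $\phi=(\rho_L^n-M)_+$ and write $\Omega_M:=\{\rho_L^n>M\}$. On $\Omega_M$ we may take $L\ge M$ (the bound is meant to remove the $L$-dependence), so $\beta^L(\rho_L^n)=\min\{\rho_L^n,L\}$. The time-difference term gives at least $\frac1{\Delta t}\|(\rho_L^n-M)_+\|_{L^2}^2$, using $\rho_L^{n-1}\le M$, and the diffusion term gives $\|\nabla(\rho_L^n-M)_+\|_{L^2}^2$. The drift term is the only delicate one. On $\Omega_M\cap\{\rho_L^n<L\}$ we have $\beta^L(\rho_L^n)\nabla\phi=\frac12\nabla\big(((\rho_L^n)^2-M^2)_+\big)$, and integrating by parts with the Neumann condition $\partial_{n_A}c_L^n=0$ turns the drift into
\[
\int_\Omega \LN c_L^n\,\cdot\,\tfrac12\big((\rho_L^n)^2-M^2\big)_+\dx
\]
minus a potential term involving $Q$. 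More cleanly, one writes $\beta^L(\rho)\nabla(\rho-M)_+=\nabla F(\rho)$ with $F(\rho)=\int_M^{\rho}\beta^L(r)\,\mathrm dr\,\mathbf 1_{\rho>M}\ge0$. Since $\LL$ includes $Q$, we get
\[
\int_\Omega A\nabla c\cdot\nabla F\dx=a_{\LN}(c,F)-\int_\Omega Q\,c\,F\dx=\int_\Omega(\LN c)\,F\dx-\int_\Omega QcF\dx .
\]
We need this to be $\le0$.

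\emph{Sign of the drift (main obstacle).} Here $c=-\LN^{-s}(\beta^L(\rho))^\diamond$, so $\LN c=-\LN^{1-s}(\beta^L(\rho))^\diamond$. By the semigroup formula \eqref{semigroupformula} with exponent $1-s$,
\[
\LN^{1-s}g(x)=\frac1{|\Gamma(s-1)|}\int_0^\infty\frac{g(x)-\ee^{-t\LN}g(x)}{t^{2-s}}\,\dt .
\]
At points where $\rho>M$ we have $\beta^L(\rho)\ge M\ge\beta^L(\rho(y))$ for all $y$, so the $L^\infty$-contractivity \eqref{Linfty contractivity} together with positivity of the kernel and the mass bound \eqref{mass heat kernel} give $\ee^{-t\LN}g(x)\le g(x)$ (working with $g=\beta^L(\rho)\ge0$). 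Hence $\LN c\le0$ on $\Omega_M$, and therefore $\int(\LN c)F\le0$. In case \ref{(K1)} the projection only subtracts a constant, which is annihilated by $\LN^{1-s}$. The term $-\int QcF$ must be handled together with this: $c\le0$ when $g\ge0$ in case \ref{(K0)}, since $\LN^{-s}$ is positivity preserving, so $-QcF\ge0$ has the wrong sign. I would therefore instead apply the semigroup argument directly to $\LL c$ including $Q$, i.e.\ use $(\LL c)=-\LN^{1-s}g$ as a whole, avoiding the split. This step, making the sign rigorous for $H^1$ functions and justifying the pointwise semigroup representation, is where I expect the difficulty; I would first regularize $F$ and verify $F\in H^1$, $\LN c\in L^2$.

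\emph{Conclusion.} Once the drift term is shown to be $\le0$, we get $(\rho_L^n-M)_+=0$, i.e.\ $\rho_L^n\le M$, which closes the induction.
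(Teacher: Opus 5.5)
Your induction on $n$ with the test function $(\rho_L^n-M)_+$ is fine, as are the base case $\rho^0\le M$ and the treatment of the time-difference and diffusion terms. It is equivalent to, and slightly cleaner than, the paper's time-summed test with $\chi_{[0,t_n]}\zeta_M^\epsilon(\rho_L^{\Delta t,+})$ followed by the convexity argument with $Z_M^\epsilon$.

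The first gap is the pointwise sign of the drift. Your inequality $\beta^L(\rho(x))\ge M\ge\beta^L(\rho(y))$ ``for all $y$'' presupposes $\rho\le M$ everywhere, which is the conclusion; it already fails at $y=x$. Without it, $\ee^{-t\LN}g(x)\le g(x)$ holds only at a global maximum point of $g$. Elsewhere in $\{\rho>M\}$, for example on the flank of a bump or on a lower bump next to a higher one, $\LN^{1-s}g(x)$ can be negative, so ``$\LN c\le 0$ on $\Omega_M$'' is not available. What holds is an \emph{integrated} sign, and this is the key step of the paper's proof. Insert the heat kernel into the semigroup formula, add and subtract $g(x)\int_\Omega W_\tau(x,y)\dy$, and symmetrize using $W_\tau(x,y)=W_\tau(y,x)$. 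Writing $F(x)$ for $F(\rho(x))$, this gives
\[
(\LN^{1-s}g,F)=\frac{1}{|\Gamma(s-1)|}\int_0^\infty\Big(\frac12\int_\Omega\!\int_\Omega W_\tau(x,y)\big(g(x)-g(y)\big)\big(F(x)-F(y)\big)\dx\dy+\int_\Omega gF\big(1-\ee^{-\tau\LN}1\big)\dx\Big)\frac{\dtau}{\tau^{2-s}}\ge0 .
\]
The sign holds because $g=\beta^L(\rho)$ and $F(\rho)$ are both nondecreasing in $\rho$, $gF\ge0$, and \eqref{mass heat kernel} holds. In case \ref{(K1)} the constant in $g^\diamond$ is annihilated and $\ee^{-\tau\LN}1=1$. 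With this replacement your argument proves the lemma in case \ref{(K1)}.

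The second gap is the $Q$-term, and the way around it you propose does not exist. The drift is $\int_\Omega A\nabla c\cdot\nabla F\dx$, which differs from $a_{\LN}(c,F)=(\LN c,F)=-(\LN^{1-s}g,F)$ by exactly $\int_\Omega QcF\dx$. Using $\LN c=-\LN^{1-s}g$ ``as a whole'' handles only $a_{\LN}(c,F)$. The remainder $-\int_\Omega QcF\dx=\int_\Omega Q\,(\LN^{-s}g)\,F\dx\ge0$ stays, with the wrong sign.

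The paper tries to absorb this term into part of the diffusion under \eqref{eq: mnecessary diffusion for Linfty bound}. That step uses $\int_\Omega|\nabla\rho|^2\dx\ge\lambda_1\int_\Omega\rho^2\dx$, which is false for Neumann data with nonzero mean. In fact no argument can close it, because in case \ref{(K0)} with non-constant $Q$ the bound itself fails:
\begin{itemize}
\item Take $\rho_0\equiv1$, so $\rho^0=1$.
\item If $\rho^1_L\le1$ a.e., mass conservation forces $\rho^1_L\equiv1$. Then $c^1_L=-w$ with $w:=\LN^{-s}1\in H^1(\Omega)$ (since $s>1/2$), and the scheme reduces to $\int_\Omega A\nabla w\cdot\nabla\phi\dx=0$ for all $\phi\in V$.
\item Choosing $\phi=w$ shows $w$ is constant. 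Then $\LN^s1$ is a multiple of $1$, so $1$ is an eigenfunction of $\LN$ and $Q$ is a.e.\ constant.
\end{itemize}
Hence for, e.g., $Q=100|x|^2$ one has $\|\rho^1_L\|_{L^\infty(\Omega)}>\|\rho_0\|_{L^\infty(\Omega)}$. Once repaired, your proof covers case \ref{(K1)}. For non-constant $Q$ the statement itself has to be weakened; a better proof will not rescue it.
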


\begin{proof}
The strategy of the proof starts with a similar idea to \cite[Section 4.2]{caffarelli2013regularity}, which goes back to De Giorgi's approach \cite{Caffarelli2010}. However, the procedure here is adapted to the discrete setting to which we are confined. We start by taking a smooth regularisation of the function $\zeta_M: \mathbb{R}_{\geq 0} \to \mathbb{R}$,  
\begin{equation*}
    \zeta_M(z) = \max( z - M, 0),
\end{equation*}
defined as $\zeta_M^{\epsilon} = \zeta_M \ast \theta_\epsilon$, where $\theta_\epsilon$ is a mollifier. By \cite[Theorem 4.22]{Brezis2011} we have that $\zeta_M^{\epsilon} \to \zeta_M$ in $L^p(\mathbb{R}_{\geq 0})$, $p\geq 1$ as $\epsilon \to 0$.

We use $\chi_{[0, t_n]} \zeta_M^{\epsilon}(\rhoLDtp)$, with $n=1, \dots, N$,
as a test function in \eqref{general WeakFormSemiDiscTimeL}. We then have \begin{align} 
    \int_{0}^{t_n} \int_{\Omega} \frac{\partial \rhoLDt}{\partial t}  \zeta_M^\epsilon(\rho_L^{\Delta t, +}) \dx \dt = &- \int_{0}^{t_n} \int_{\Omega}  \nabla \rhoLDtp \cdot \nabla  \zeta_M^\epsilon(\rho_L^{\Delta t, +}) \dx \dt \nonumber \\
    &+ \int_{0}^{t_n} \int_{\Omega} \beta^{L}(\rhoLDtp) A(x) \nabla c_{L}^{\Delta t, +} \cdot \nabla  \zeta_M^\epsilon(\rho_L^{\Delta t, +}) \dx \dt. \label{Linfty eq1}
\end{align}

By \cite[Lemma 9.1]{Brezis2011}, we have $(\zeta_M^\epsilon)'(z) = (\zeta_M' \ast \theta_\varepsilon)(z)$ in the sense of weak derivatives and therefore, since $\zeta_M'(z) =\chi_{(M, \infty)}(z)$, we have that $(\zeta_M^\epsilon)'(z) \geq 0$.
Thus the first term on the right-hand side of \eqref{Linfty eq1} is 
\begin{equation*}
   -\int_{0}^{t_n} \int_{\Omega}  \nabla \rhoLDtp \cdot \nabla  \zeta_M^\epsilon(\rho_L^{\Delta t, +}) \dx \dt  = - \int_0^{t_n} \int_\Omega (\zeta_M^\epsilon)'(\rho_L^{\Delta t, +})  |\nabla \rhoLDtp|^2 \dx \dt \leq 0.
\end{equation*}

For the second term on the right-hand side of \eqref{Linfty eq1} we have 
\begin{align*}
    \int_{0}^{t_n} \int_{\Omega} \beta^{L}(\rhoLDtp) A(x) \nabla c_{L}^{\Delta t, +} \cdot \nabla  \zeta_M^\epsilon(\rho_L^{\Delta t, +}) \dx \dt &= \int_0^{t_n} \int_\Omega (\zeta_M^\epsilon)'(\rho_L^{\Delta t, +}) \beta^L(\rho_L^{\Delta t, +}) A(x) \nabla c_{L}^{\Delta t, +} \cdot \nabla \rho_L^{\Delta t, +} \dx \dt  \\
    & = \int_0^{t_n} \int_\Omega  A(x) \nabla c_{L}^{\Delta t, +} \cdot \nabla H_M(\rho_L^{\Delta t, +}) \dx \dt,    
\end{align*}
where $H_{M}(z) = \int_0^z \beta^L(\omega) (\zeta_M^\epsilon)'(\omega) \textrm{d}\omega$. Notice that, because $\rho_L^{\Delta t, +}$ is nonnegative and $(\zeta_M^\epsilon)'(z) \geq 0$, $H_M(\cdot)$ is nonnegative and monotonically increasing.
 
Notice now that 
\begin{align*}
    \int_\Omega  A(x) \nabla c_{L}^{\Delta t, +} \cdot \nabla H_M(\rho_L^{\Delta t, +}) \dx = a_{\LN}( c_{L}^{\Delta t, +}, H_M(\rho_L^{\Delta t, +})) - \int_\Omega Q(x) c_{L}^{\Delta t, +}  H_M(\rho_L^{\Delta t, +}) \dx  
\end{align*}
and recall that
\begin{align*}
    c_{L}^{\Delta t, +} = - \LN^{-s}(\beta^L(\rhoLDtp))^\diamond = - \int_\Omega W_t(x,y) (\beta^L(\rhoLDtp))^\diamond \dy,
\end{align*}
where $W_t(x,y)$ is the heat kernel associated to $\LN$. 
Moreover either $Q \equiv 0$ or $(\beta^L(\rhoLDtp))^\diamond = \beta^L(\rhoLDtp)$ and therefore the term $\int_\Omega Q(x) c_{L}^{\Delta t, +}  H_M(\rho_L^{\Delta t, +}) \dx $ is always nonnegative. 

We have that
\begin{align}
    \int_\Omega  A(x) \nabla c_{L}^{\Delta t, +} \cdot \nabla H_M(\rho_L^{\Delta t, +}) \dx &= a_{\LN}( c_{L}^{\Delta t, +}, H_M(\rho_L^{\Delta t, +})) - \int_\Omega Q(x) c_{L}^{\Delta t, +}  H_M(\rho_L^{\Delta t, +}) \dx  \nonumber \\
    &= (\LN c_{L}^{\Delta t, +}, H_M(\rho_L^{\Delta t, +})) - \int_\Omega Q(x) c_{L}^{\Delta t, +}  H_M(\rho_L^{\Delta t, +}) \dx \nonumber \\
    &= - (\LN^{1-s} (\beta^L(\rhoLDtp))^\diamond, H_M(\rho_L^{\Delta t, +})) - \int_\Omega Q(x) c_{L}^{\Delta t, +}  H_M(\rho_L^{\Delta t, +}) \dx . \label{composite equality Linfty bound}
\end{align}

Using that either \[(\beta^L(\rhoLDtp))^\diamond = \beta^L(\rhoLDtp) \quad \textrm{or} \quad (\beta^L(\rhoLDtp))^\diamond = (\beta^L(\rhoLDtp))^\ast = \beta^L(\rhoDtp) - \int_\Omega \beta^L(\rhoDtp) \dx \] and the definition of the heat semigroup \eqref{GeneralHE}, we then have that either \[ \mathrm{e}^{-\tau \LN} (\beta^L(\rhoDtp))^\diamond=\mathrm{e}^{-\tau \LN} \beta^L(\rhoDtp) \quad \textrm{or} \quad \mathrm{e}^{-\tau \LN} (\beta^L(\rhoDtp))^\diamond=\mathrm{e}^{-\tau \LN} \beta^L(\rhoDtp) - \int_\Omega \beta^L(\rhoDtp) \dx.\]
Thus, denoting $\beta=2(1-s)$, using the representation formula \eqref{semigroupformula} for $\LN^s$, we can write 
\begin{align*}
(\LN^{\beta/2} c_{L}^{\Delta t, +}, H_M(\rho_L^{\Delta t, +})) & = \int_{\Omega} H_M(\rhoDtp(x)) \frac{1}{\Gamma(-\beta/2)} \int_{0}^{\infty} (\mathrm{e}^{-\tau \LN} (\beta^L(\rhoDtp))^\diamond(x) \\
&  \hspace{7mm} - (\beta^L(\rhoDtp))^\diamond(x)) \frac{1}{\tau^{1+\frac{\beta}{2}}} \,\dtau \dx \\
&\hspace{-10mm}= \int_{\Omega} H_M(\rhoDtp(x)) \frac{1}{\Gamma(-\beta/2)} \int_{0}^{\infty} (\mathrm{e}^{-\tau \LN} \beta^L(\rhoDtp)(x) - \beta^L(\rhoDtp)(x)) \frac{1}{\tau^{1+\frac{\beta}{2}}} \,\dtau \dx \\
&\hspace{-10mm}= \frac{1}{\Gamma(-\beta/2)} \int_{0}^{\infty} \bigg( \int_{\Omega} H_M(\rhoDtp(x)) \mathrm{e}^{-\tau\LN} \beta^L(\rhoDtp)(x) \dx \\
&\hspace{-10mm} \hspace{7mm} - \int_{\Omega} H_M(\rhoDtp(x)) \beta^L(\rhoDtp)(x) \dx \bigg) \frac{1}{\tau^{1+\frac{\beta}{2}}}\, \dtau\\
&\hspace{-10mm}= \frac{1}{\Gamma(-\beta/2)} \int_{0}^{\infty} \big(  (\mathrm{e}^{-\tau\LN} \beta^L(\rhoDtp), H_M(\rhoDtp) ) - ( \beta^L(\rhoDtp), H_M(\rhoDtp)) \big) \frac{1}{\tau^{1+\frac{\beta}{2}}}\, \dtau.
\end{align*}

Therefore, by plugging the heat kernel in the expression above, we deduce that 
\begin{align*}
(\LN^{\beta/2} c_{L}^{\Delta t, +}, H_M(\rho_L^{\Delta t, +})) &= \frac{1}{\Gamma(-\beta/2)} \int_{0}^{\infty} \int_{\Omega} \bigg( \int_{\Omega} W_{\tau}(x, y) \beta^L(\rhoDtp)(x) H_M(\rhoDtp(y)) \dx \\
& \hspace{7mm} - \beta^L(\rhoDtp)(y) H_M(\rhoDtp(y)) \bigg) \dy \,\frac{1}{\tau^{1+\frac{\beta}{2}}} \,\dtau \\
&\hspace{-18mm}= \frac{1}{\Gamma(-\beta/2)} \int_{0}^{\infty} \int_{\Omega} \bigg( \int_{\Omega} W_{\tau}(x, y)(\beta^L(\rhoDtp)(x) - \beta^L(\rhoDtp)(y)) H_M(\rhoDtp(y))\dx \\
&\hspace{-18mm}\quad + \beta^L(\rhoDtp)(y) H_M(\rhoDtp(y)) \bigg( \int_{\Omega} W_{\tau}(x, y)\dx - 1 \bigg) \bigg)\dy\, \frac{1}{\tau^{1+\frac{\beta}{2}}} \,\dtau \\
&\hspace{-18mm}= \frac{1}{\Gamma(-\beta/2)} \int_{0}^{\infty} \int_{\Omega} \int_{\Omega} W_{\tau}(x, y) (\beta^L(\rhoDtp)(x) - \beta^L(\rhoDtp)(y)) H_M(\rhoDtp(y)) \dy \dx \, \frac{1}{\tau^{1+\frac{\beta}{2}}} \,\dtau \\
&\hspace{-18mm}\quad + \frac{1}{\Gamma(-\beta/2)} \int_{0}^{\infty} \int_{\Omega} \beta^L(\rhoDtp)(y) H_M(\rhoDtp(y))(\mathrm{e}^{\tau \LN}1(y) - 1) \dy \,\frac{1}{\tau^{1+\frac{\beta}{2}}} \,\dtau,
\end{align*}
where in the passage from the second to the third line we have added and subtracted the term \newline $\frac{1}{\Gamma(-\beta/2)}  \int_0^\infty \int_\Omega \int_\Omega \beta^L(\rhoLDtp)(y) H_M(\rhoDtp(y)) W_t(x,y) \dx \dy  \,\, \tau^{-(1+\beta/2)} \dtau $.
By exchanging $x$ and $y$ and using the symmetry of the heat kernel we have also 
\begin{align*}
(\LN^{\beta/2} c_{L}^{\Delta t, +}, H_M(\rho_L^{\Delta t, +})) = &-  \frac{1}{\Gamma(-\beta/2)} \int_{0}^{\infty} \int_{\Omega} \int_{\Omega} W_{\tau}(x, y) (\beta^L(\rhoDtp)(x) \\
&\hspace{7mm}- \beta^L(\rhoDtp)(y)) H_M(\rhoDtp(x)) \dy\dx \, \frac{1}{\tau^{1+\frac{\beta}{2}}} \,\dtau \\
& +\frac{1}{\Gamma(-\beta/2)} \int_{0}^{\infty} \int_{\Omega} \rho(y) H_M(\rhoDtp(y))(\mathrm{e}^{\tau \LN}1(y) - 1) \dy\, \frac{1}{\tau^{1+\frac{\beta}{2}}} \,\dtau. 
\end{align*}
By summing the two expressions we then find that  
\begin{align}
(\LN^{\beta/2} c_{L}^{\Delta t, +}, H_M(\rho_L^{\Delta t, +})) = &-\frac{1}{2\Gamma(-\beta/2)} \Bigg( \int_{0}^{\infty} \int_{\Omega} \int_{\Omega} W_{\tau}(x, y) (\beta^L(\rhoDtp)(x) - \beta^L(\rhoDtp)(y))(H_M(\rhoDtp(x)) \nonumber \\
& \hspace{7mm} - H_M(\rhoDtp(y)) \dx \dy \,\frac{1}{\tau^{1+\frac{\beta}{2}}} \dtau \nonumber \\
& + \int_{0}^{\infty} \int_{\Omega} \beta^L(\rhoDtp)(y) H_M(\rhoDtp(y))(1- \mathrm{e}^{\tau \LN }1(y)) \dy \,\frac{1}{\tau^{1+\frac{\beta}{2}}} \,\dtau \Bigg). \label{composite equality Linfty bound 2}
\end{align}
Thanks to the definition of $\LN$ stated and \eqref{mass heat kernel} we have that $\mathrm{e}^{\tau \LN}1(y) \leq  1$. Moreover the monotonicity of $\beta^L(\cdot)$ and $H_M(\cdot)$ ensure the nonpositivity of both terms on the right-hand side of the above expression.

Thus, combining \eqref{composite equality Linfty bound} and \eqref{composite equality Linfty bound 2}, overall we have 
\begin{align}
    \int_{0}^{t_n} \int_{\Omega} \frac{\partial \rhoLDt}{\partial t}  \zeta_M^\epsilon(\rho_L^{\Delta t, +}) \dx = & - \int_0^{t_n} \int_\Omega (\zeta_M^\epsilon)'(\rho_L^{\Delta t, +})  |\nabla \rhoLDtp|^2 \dx \dt \nonumber \\
    & +\frac{1}{2\Gamma(-\beta/2)} \Bigg( \int_{0}^{\infty} \int_{\Omega} \int_{\Omega} W_{\tau}(x, y) (\beta^L(\rhoDtp)(x) - \beta^L(\rhoDtp)(y))(H_M(\rhoDtp(x)) \nonumber \\
    &- H_M(\rhoDtp(y)) \dx \dy \,\frac{1}{\tau^{1+\frac{\beta}{2}}} \dtau \nonumber \\
& + \int_{0}^{\infty} \int_{\Omega} \beta^L(\rhoDtp)(y) H_M(\rhoDtp(y))(1- \mathrm{e}^{\tau
 \LN}1(y)) \dy \,\frac{1}{\tau^{1+\frac{\beta}{2}}} \,\dtau \Bigg) \nonumber \\ 
 &- \int_\Omega Q(x) c_{L}^{\Delta t, +}  H_M(\rho_L^{\Delta t, +}) \dx . \label{overall equality Linfty lemma}
\end{align}

All the terms in the above expression are nonpostive expect for the last term,
\begin{equation*}
  - \int_0^{t_n}  \int_\Omega Q(x) c_L^{\Delta t, +} H_M(\rho_L^{\Delta t, +}) \dx \dt = \int_0^{t_n}  \int_\Omega Q(x) \LN^{-s} (\beta^L(\rho_L^{\Delta t, +}))^\diamond H_M(\rho_L^{\Delta t, +}) \dx \dt \geq 0.
\end{equation*}
If $Q=0$ a.e. in $\Omega$ the term vanishes and hence we have immediately that the whole expression \eqref{overall equality Linfty lemma} is nonpositive.
In case $Q$ is not vanishing, we need to absorb the term in the first term on the right-hand side of \eqref{overall equality Linfty lemma}. We have that $(\beta^L(\rho_L^{\Delta t, +}))^\diamond = \beta^L(\rho_L^{\Delta t, +})$
\begin{align*}
  \left| \int_0^{t_n}  \int_\Omega Q(x) c_L^{\Delta t, +} H_M(\rho_L^{\Delta t, +}) \dx \dt \right| &\leq  \| Q \|_{L^\infty(\Omega)} \int_0^{t_n}  \int_\Omega  \LN^{-s} \beta^L(\rho_L^{\Delta t, +}) L \rho_L^{\Delta t, +}  \dx \dt \\
  &\leq L \| Q \|_{L^\infty(\Omega)} \int_0^{t_n} \int_\Omega \LN^{-s} \rho_L^{\Delta t, +} \rho_L^{\Delta t, +} \dx \dt \\
  &\leq L \| Q \|_{L^\infty(\Omega)} \lambda_1^{-s} \int_0^{t_n} \int_\Omega (\rho_L^{\Delta t, +})^2 \dx \dt.
\end{align*}

On the other hand we have 
\begin{align*}
    - \int_0^{t_n} \int_\Omega (\zeta_M^\epsilon)'(\rho_L^{\Delta t, +})  |\nabla \rhoLDtp|^2 \dx &= - \frac{1}{2} \varepsilon \int_0^{t_n} \int_\Omega ((\zeta_M^\epsilon)'(\rho_L^{\Delta t, +}))^2 |\nabla \rhoLDtp|^2 \dx \dt \\
    &\quad - \frac{1}{2 \varepsilon} \int_0^{t_n} \int_\Omega |\nabla \rhoLDtp|^2 \dx \dt 
\end{align*}
and 
\begin{align*}
      \frac{1}{2 \varepsilon } \int_0^{t_n} \int_\Omega |\nabla \rhoLDtp|^2 \dx \dt  \geq \frac{1}{2 \varepsilon} \lambda_1 \int_0^{t_n} \int_\Omega  (\rhoLDtp)^2 \dx \dt.
\end{align*}

Therefore in the case $Q$ is non vanishing a.e, by choosing
\begin{equation} \label{eq: mnecessary diffusion for Linfty bound}
    \varepsilon < \frac{\lambda_1^{1+s}}{2 L \| Q \|_{L^\infty(\Omega)}}, 
\end{equation}
from \eqref{overall equality Linfty lemma} we have overall that 
\begin{equation} \label{mon equality 1}
       \int_{0}^{t_n} \int_{\Omega} \frac{\partial \rhoLDt}{\partial t}  \zeta_{M}^\epsilon(\rho_L^{\Delta t, +}) \dx \leq 0, \quad \textrm{for any } M 
\end{equation}

Consider now the function $Z_M: \mathbb{R}_{\geq 0} \to \mathbb{R}$
\begin{equation*}
    Z_M(z) = \frac{1}{2} \max( (z - M)^2, 0)
\end{equation*}
and its regularisation $Z_M^\epsilon = Z_M \ast \theta_\epsilon$;
the weak derivative of $Z$ is clearly $\zeta_M$ and therefore, as before, we have $(Z_M^\epsilon)'(z) = (Z_M' \ast \theta_\epsilon)(z) = (\zeta_M \ast \theta_\epsilon)(z)=\zeta_M^\epsilon(z) $.

 Using a Taylor expansion of $Z_M^\epsilon (\cdot)$ and noting that $t\in [0, T] \mapsto \rhoLDt(\cdot,t)$ is piecewise affine relative to the temporal partition $0 = t_{0} < t_{1} < \cdots < t_{N} = T$, we have that
        \begin{align*}
            \int_{0}^{t_n} \int_{\Omega} \frac{\partial \rhoLDt}{\partial t} \zeta_M^\varepsilon(\rhoLDtp) \dx\,\dtau &= \int_{\Omega} Z^\epsilon_M(\rhoLDtp(x,t_n)) \dx - \int_{\Omega} Z_M^\varepsilon(\rho^{0}(x)) \dx \\
            & \quad + \frac{1}{2 \Delta t} \int_{0}^{t_n} \int_{\Omega} (\zeta_M^\epsilon)'(\theta_n \rhoLDtp + (1-\theta_n) \rhoLDtm) (\rhoLDtp - \rhoLDtm)^{2} \dx\,\dtau,
        \end{align*}
for $\theta_n \in (0,1)$.

Thus, from \eqref{mon equality 1} and the fact that $(\zeta_M^\epsilon)' \geq 0$, we directly have 
\begin{equation} \label{monotonicity inequality 2}
    \int_{\Omega} Z^\epsilon_{M}(\rhoLDtp(x,t_n)) \dx - \int_{\Omega} Z_{M}^\varepsilon(\rho^{0}(x)) \dx   \leq 0.
\end{equation}
If we set $M = \| \rho_0 \|_{L^\infty(\Omega)}$, we  have 
\begin{equation*}
    \int_{\Omega} Z_{\| \rho_0 \|_{L^\infty(\Omega)}}^\epsilon(\rhoLDtp(x,t_n)) \dx - \int_{\Omega} Z_{\| \rho_0 \|_{L^\infty(\Omega)}}^\varepsilon(\rho^{0}(x)) \dx  \leq 0.
\end{equation*}
Recalling that for any $M>0$ we have $Z_M^{\epsilon} \to Z_M$ in $L^p(\mathbb{R}_{\geq 0})$, $p\geq 1$, as $\epsilon \to 0$ (c.f. \cite[Theorem 4.22]{Brezis2011}), it follows that 
\begin{equation*}
    \int_{\Omega} Z_{\| \rho_0 \|_{L^\infty(\Omega)}}(\rhoLDtp(x,t_n)) \dx \leq 0.
\end{equation*}
Using the fact that $Z_M \geq 0$ this then implies that 
\begin{equation*}
    \int_{\Omega} Z_{\| \rho_0 \|_{L^\infty(\Omega)}}(\rhoLDtp(x,t_n)) \dx = 0
\end{equation*}
Thus, we deduce that, for any $n=1, \dots, N$, \eqref{general InfNormBoundDiscTime} must hold. 
\end{proof}

\normalcolor
Therefore, we can drop immediately the upper cut-off parameter $L$ in \eqref{general WeakFormSemiDiscTimeLMain}, by choosing $L> \| \rho_0\|_{L^\infty(\Omega)}$ and consider the problem 
\begin{subequations} \label{general WeakFormSemiDiscTimeMain noL}
\begin{gather}
    \text{Find } \rho^{\Delta t}(\cdot,t) \in V =H^1(\Omega)\, \mbox{ for } t\in (0, T], \text{ such that } \nonumber \\
    \int_{0}^{T} \int_{\Omega} \frac{\partial \rhoDt}{\partial t} \phi \dx = - \int_{0}^{T} \int_{\Omega}  \nabla \rhoDtp \cdot \nabla \phi \dx + \int_{0}^{T} \int_{\Omega}\rhoDtp A(x) \nabla c^{\Delta t, +} \cdot \nabla \phi \dx  \label{general WeakFormSemiDiscTime noL} \\ \quad \!\!\text{for all } \phi \in L^1(0,T;V),  \end{gather}
    subject to the initial condition $\rho^{\Delta t}(x, 0) = \rho^{0}(x)$ for a.e.~$x \in \Omega$, where $c_{L}^{\Delta t, +}(\cdot,t) \in \mathcal{H}^1_\diamond(\Omega)$ for all $t \in (0,T]$ and satisfies
    \begin{equation} \label{general FracPoiDiscTime noL} -\LN^{\s} c^{\Delta t, +}(\cdot,t) = (\rhoDtp)^\diamond(\cdot,t)\quad \mbox{in $\Omega$},
\end{equation}
\end{subequations}
for all $t \in (0,T]$.

\begin{remark}
   We comment on a difference between the proof of Lemma \ref{general InfNormDecayDiscTime} and a similar result that can be obtained for the standard porous medium equation with a fractional pressure, given by the fractional Laplacian (i.e. $\LL = -\Delta$). For the latter the parabolic regularisation term is not necessary to prove the $L^\infty(\Omega)$ bound for the solution $\rhoDtp$; in fact in \cite{caffarelli2010nonlinear, caffarelli2010asymptotic} the authors proved this type of result for the solution of the porous medium equation with a fractional pressure on $\mathbb{R}^d$ \eqref{FracPor FracLap}, without the parabolic regularisation term in place. The same property was also showed subsequently in \cite{chen2022analysis}, without using the presence of standard diffusion. In our case, in the proof of Lemma \ref{general InfNormDecayDiscTime}, the presence of standard diffusion has been crucial in the case the source term $Q$ of the operator $\LN$ does not vanish a.e. in $\Omega$. More precisely the parabolic regularisation was necessary in controlling the quantity $\frac{\lambda_1^{1+s}}{2 L \| Q \|_{L^\infty(\Omega)}}$ in \eqref{eq: mnecessary diffusion for Linfty bound}. Clearly, the first eigenvalue depends on $Q$, and we can investigate further the quantity that gets controlled by the standard diffusion term in the equation. 
    
    Let $\{ \lambda_k\}_{k \geq 1}$ be the eigenvalues of the self-adjoint elliptic operator induced by the bilinear form  
    \begin{equation*}
        a(\phi, \psi) = \int_\Omega \nabla \phi \cdot \nabla \psi \dx, \qquad \phi, \psi \in H^1(\Omega).
    \end{equation*}
We can compare $\{ \lambda_k\}_{k \geq 1}$ with the eigenvalues $\{ \lambda_k\}_{k \geq 1}$ of the bilinear form $a_{\LN}(\cdot, \cdot)$, defined in \eqref{Bilinear From Operator LN}.
    For $\phi \in H^1(\Omega)$ with $\| \phi \|_{L^2(\Omega)} = 1$ we have 
    \begin{align*}
     a_{\LN}(\phi, \phi) &=  \int_\Omega A(x) \nabla \phi \cdot \nabla \phi \dx  + \int_\Omega Q(x) \phi^2 \dx \geq \Lambda_1 \int_\Omega  \nabla \phi \cdot \nabla \phi \dx + \inf_{x \in \Omega}  Q(x)  \\
     &\geq \Lambda_1 a(\phi, \phi) + \inf_{x \in \Omega}  Q(x),
    \end{align*}
    where $\Lambda_1$ is the uniform ellipticity constant of the operator \eqref{mytypeofoperator}.
    
    Using the version of Courant-Fisher min-max theorem for self-adjoint operators on Hilbert spaces (see \cite[Theorem XIII.1 and XIII.2]{Reed1978}), having assumed that $\{ \lambda_k \}_{k\geq 1}$ and  $\{ \lambda_k \}_{k \geq 1}$ are ordered, we have then that 
    \begin{equation}
        \lambda_k \geq \Lambda_1 \lambda_k + \inf_{x \in \Omega}  Q(x).
    \end{equation}

Therefore the quantity in \eqref{eq: mnecessary diffusion for Linfty bound} is such that 
\begin{equation*}
\frac{\lambda_1^{1+s}}{2 L \| Q \|_{L^\infty(\Omega)}} > \frac{(\Lambda_1 \lambda_1 + \inf_{x \in \Omega}  Q(x))^{1+s}}{2 \| Q \|_{L^\infty(\Omega)}}.
\end{equation*}
The above inequality shows that the more the difference $\| Q \|_{L^\infty(\Omega)}  - \inf_{x\in \Omega} Q(x)$ increases, the more the term needs to be balanced by part of the diffusion term. Similarly, the magnitude of $\Lambda_1$, related to the matrix $A(x)$, influences the amount of standard diffusion that is needed to balance the term.
Roughly speaking, a function $Q$ with high variations on $\Omega$ requires a larger portion of the standard diffusion term to ensure the convergence of the scheme. 
\end{remark}

We can now start to derive a bound that is uniform in $\Delta t$. Our candidate test function for \eqref{general WeakFormSemiDiscTime noL} will be $\phi = \chi_{[0, t]} G'(\rhoDtp)$, where $\chi_{[0,t]}$ denotes the characteristic function of the interval $[0,t]
$. We recall that  $s G''(s) = 1$  for all $s \in \mathbb{R}$.

Thus we take $\alpha \in (0,1)$ and we choose $\phi =\chi_{[0, t]} G'(\rhoDtp+ \alpha)$, with $t = t_{n}$, $n \in \{ 1, \ldots, N \}$. We have 
\begin{align*}
    \underbrace{\int_{0}^{T} \int_{\Omega} \frac{\partial \rhoDt}{\partial s} \chi_{[0,t_n]} G'(\rhoDtp + \alpha) \dx \dt}_{\circled{1}} &+ \underbrace{\int_{0}^{T} \int_{\Omega} \nabla \rhoDtp \cdot \nabla \chi_{[0,t_n]} G'(\rhoDtp + \alpha) \dx \dt}_{\circled{2}} \\
    &= \underbrace{\int_{0}^{T} \int_{\Omega} \rhoDtp A(x) \nabla c^{\Delta t, +} \cdot \nabla\chi_{[0,t_n]} G'(\rhoDtp + \alpha) \dx \dt}_{\circled{3}}. 
\end{align*}
Let us now manipulate the three terms one at a time. First, by Taylor expansion with a remainder of the function
\[ s \in [0, \infty) \mapsto G(s + \alpha) \in [0, \infty), \]
we have that, for any $b \in [0, \infty)$,
\[ (s-b) G'(s + \alpha) = G(s + \alpha) - G(b + \alpha) + \frac{1}{2}(s - b)^{2} G''(\theta s + (1-\theta)b + \alpha),\] with $\theta \in (0,1)$. Noting that for $t \in [0,T]$ the function $t \mapsto \rhoDt(\cdot,t)$ is, by definition, piecewise affine relative to the partition $0 = t_{0}<t_{1}< \cdots <t_{N} = T$ of the interval $[0,T]$, it follows that 
\begin{align*}
\circled{1} &= \int_{\Omega} G(\rhoDtp(x,t_n) + \alpha) \dx - \int_{\Omega} G(\rho^{0}(x) + \alpha) \dx \\
& \quad + \frac{1}{2 \Delta t} \int_{0}^{t_n} \int_{\Omega} G''(\theta \rhoDtp + (1-\theta) \rhoDtm + \alpha) (\rhoDtp - \rhoDtm)^{2} \dx\,\dtau.
\end{align*}
As $G''(s + \alpha) = \frac{1}{s+\alpha}$ for all $s \geq 0$ and \eqref{general InfNormBoundDiscTime} implies that $G''(\theta \rhoDtp + (1-\theta) \rhoDtm + \alpha) \geq \frac{1}{\| \rho_{0}\|_{L^{\infty}(\Omega)} + \alpha}$ almost everywhere in $\Omega\times (0,T)$ it follows that
\begin{align*}
    \circled{1} \geq \int_{\Omega} G(\rhoDtp(x,t_n) + \alpha) \dx &- \int_{\Omega} G(\rho^{0}(x) + \alpha) \dx \\
    &+ \frac{1}{2 \Delta t \, (\| \rho_{0}\|_{L^{\infty}(\Omega)} + \alpha)} \int_{0}^{t_n} \int_{\Omega} (\rhoDtp - \rhoDtm)^{2} \dx\,\dtau.
\end{align*}

For the second term we have 
\begin{align*}
    \circled{2} = \int_{0}^{t_n} \int_{\Omega} G''(\rhoDtp + \alpha) |\nabla \rhoDtp|^{2} \dx \, \,\dtau = \int_{0}^{t_n} \int_{\Omega} \frac{|\nabla \rhoDtp|^{2}}{\rhoDtp + \alpha} \dx \, \,\dtau,
\end{align*}
again, since $G''(s+\alpha) = \frac{1}{s+\alpha}$ for all $s \geq 0$.

For the third term we have
\begin{align*}
    \circled{3} &= \int_{0}^{t_n} \int_{\Omega} \rhoDtp(G^L)''(\rhoDtp + \alpha) A(x) \nabla c^{\Delta t, +} \cdot \nabla \rhoDtp \dx\,\dtau \\
    &= \int_{0}^{t_n} \int_\Omega \frac{\rhoDtp}{\rhoDtp + \alpha} A(x) \nabla c^{\Delta t, +} \cdot \nabla \rhoDtp \dx\,\dtau.
\end{align*}

Putting everything together we therefore have that  
\begin{align}
    \int_{\Omega} G(\rhoDtp(x,t_n) + \alpha) \dx &+ \frac{1}{2 \Delta t \, (\| \rho_{0}\|_{L^{\infty}(\Omega)} + \alpha)} \int_{0}^{t_n} \int_{\Omega} (\rhoDtp - \rhoDtm)^{2} \dx\,\dtau \nonumber \\
    &+ \int_{0}^{t_n} \int_{\Omega} \frac{|\nabla \rhoDtp|^{2}}{\rhoDtp + \alpha} \dx \, \,\dtau \nonumber  \\
    &- \int_{0}^{t_n} \int_{\Omega}\frac{\rhoDtp}{\rhoDtp + \alpha} A(x) \nabla c^{\Delta t, +} \cdot \nabla \rhoDtp  \dx \, \,\dtau  \label{StabEstAlpha} \\ &\hspace{-7mm}  \leq \int_{\Omega} G(\rho^{0}(x) + \alpha) \dx  \nonumber.
\end{align}
We can now pass to the limit as $\alpha \to 0_+$ to obtain the following bound for $n=1,\ldots,N$, which is uniform in $\Delta t$ thanks to (\ref{IneqPropIn}):
\begin{align} \int_{\Omega} G(\rhoDtp(x,t_n)) \dx &+ \frac{1}{2 \Delta t \, \| \rho_{0}\|_{L^{\infty}(\Omega)}} \int_{0}^{t_n} \int_{\Omega} (\rhoDtp - \rhoDtm)^{2} \dx\,\dtau  \nonumber \\  & + 4\int_{0}^{t_n} \int_{\Omega} \bigg|\nabla \sqrt{\rhoDtp}\bigg|^{2} \dx\,\dtau  - \int_0^{t_n} \int_\Omega A(x) \nabla c^{\Delta t, +} \cdot \nabla \rhoDtp  \dx \, \,\dtau \label{RhoIneqEnergy1} \\ &\hspace{-7mm}\leq \int_{\Omega}G(\rho^{0}(x)) \leq \int_{\Omega} G(\rho_{0})  \dx . \nonumber \end{align}  
Finally we notice that
\begin{align}
    \int_{\Omega} A(x) \nabla c^{\Delta t, +}(x,t) \cdot \nabla \rhoDtp(x,t) \dx &=  \int_{\Omega} A(x) \nabla c^{\Delta t, +}(x,t) \cdot \nabla (\rhoDtp(x,t))^\diamond \dx \nonumber \\
    &= (\LN c^{\Delta t, +}, (\rhoDtp)^\diamond) - \int_\Omega Q(x) c^{\Delta t, +} (\rhoDtp)^\diamond \dx. \nonumber \\
    &= -(\LN^{1-s} (\rhoDtp)^{\diamond}, (\rhoDtp)^\diamond) - \int_\Omega Q(x) c^{\Delta t, +} (\rhoDtp)^\diamond \dx. \label{eq: interaction term expansion Dt}
\end{align}
Therefore, in case $Q=0$ a.e. in $\Omega$ the whole of the above expression is nonpositive and therefore the term $-\int_{\Omega} A(x) \nabla c^{\Delta t, +}(x,t) \cdot \nabla \rhoDtp(x,t) \dx$ in \eqref{RhoIneqEnergy1} can be discarded since it contributes constructively to the inequality. If $Q$ is not vanishing a.e. in $\Omega$ then the term $- \int_\Omega Q(x) c^{\Delta t, +} (\rhoDtp)^\diamond \dx$ is nonnegative and cannot be discarded. In this case, we firstly observe  that $(\rhoDtp)^\diamond = \rhoDtp$ and secondly that, by \eqref{LInftyPoiStab} in Lemma \ref{LInftyPoiStabFracOp} we have 
\begin{equation*}
 \| c^{\Delta t, +} \|_{L^\infty(\Omega)} \leq K_{\Omega, s,d} \| \rhoDtp \|_{L^\infty(\Omega)},
\end{equation*}
for a positive constant $K_{\Omega, s,d}$ depending on the domain $\Omega$, the fractional order $s$ and the dimension $d$.
Therefore, using \eqref{general InfNormBoundDiscTime},  
\begin{align}
    \int_\Omega Q(x) c^{\Delta t, +} (\rhoDtp)^\diamond \dx &\leq \| Q\|_{L^\infty(\Omega)}  K_{\Omega, s,d} \| \rho_0 \|_{L^\infty(\Omega)} \int_\Omega \rhoDtp \dx \nonumber \\
    &= \| Q\|_{L^\infty(\Omega)}  K_{\Omega, s,d} \| \rho_0 \|_{L^\infty(\Omega)} |\Omega|. \label{eq: uniform bound for Q term}
\end{align}
The constant $  \| Q\|_{L^\infty(\Omega)}  K_{\Omega, s,d} \| \rho_0 \|_{L^\infty(\Omega)} |\Omega|$ is independent of $\Delta t$ and it depends only on the data of the problem, thus it can be merged with the left-hand side of \eqref{RhoIneqEnergy1} into a unique constant, independent of $\Delta t$, that we denote by $C(\rho_0)$.
 
Therefore, combining \eqref{RhoIneqEnergy1}, \eqref{eq: interaction term expansion Dt} and \eqref{eq: uniform bound for Q term} we have, for $n=1,\ldots,N$, that
\begin{align}\label{ExtUniformBoundSpatialDer}
\begin{aligned}
 \int_{\Omega} G(\rhoDtp(x,t_n)) \dx & + \frac{1}{2 \Delta t \, \| \rho_{0}\|_{L^{\infty}(\Omega)} } \int_{0}^{t_n} \int_{\Omega} (\rhoDtp - \rhoDtm)^{2} \dx\,\dtau  + 4\int_{0}^{t_n} \int_{\Omega} \bigg|\nabla \sqrt{\rhoDtp}\bigg|^{2} \dx\,\dtau \\
 &  \leq  \int_{\Omega} G(\rho_{0}) \dx +  \| Q\|_{L^\infty(\Omega)}  K_{\Omega, s,d} \| \rho_0 \|_{L^\infty(\Omega)} |\Omega| \coloneq C(\rho_{0}). 
\end{aligned}
\end{align}
Notice that, in the case $Q=0$ a.e. in $\Omega$ we would have $C(\rho_0)$ simply equal to $\int_{\Omega} G(\rho_{0}) \dx$.

Using property (\ref{IneqPropIn}) we can supplement the above inequality with additional bounds. The first of these is arrived at by noticing that 
\begin{align*}
 4 \int_{0}^{T} \int_{\Omega}  \bigg| \nabla \sqrt{\rhoDtm} \bigg|^{2} \dx  \dt 
 &= 4 \Delta t \int_{\Omega} \bigg| \nabla \sqrt{\rho^{0}} \bigg|^{2} \dx + 4 \int_{\Delta t}^{T } \int_{\Omega} \bigg| \nabla \sqrt{\rhoDtm} \bigg|^{2} \dx \dt \\
 &= 4 \Delta t \int_{\Omega} \Big| \nabla \sqrt{\rho^{0}} \bigg|^{2} \dx + 4 \int_{0}^{T - \Delta t} \int_{\Omega} \Big| \nabla \sqrt{\rhoDtp} \bigg|^{2} \dx  \dt \\
 & \leq \int_{\Omega} G(\rho_{0}) \dx  + 4 \int_{0}^{T - \Delta t} \int_{\Omega} \Big| \nabla \sqrt{\rhoDtp} \bigg|^{2} \dx \dt \leq C_{\star},
\end{align*}
where in the two inequalities appearing in the last line we used  \eqref{IneqPropIn} and \eqref{ExtUniformBoundSpatialDer}, respectively; here and henceforth $C_{\star}$ signifies a generic positive constant, independent of $\Delta t$.

A simple calculation then shows that we can also get a similar inequality for $\rhoDt$, and therefore we have the uniform bound 
\begin{equation} \label{ExtUniformBoundSpatialDerPlus}
    4 \int_{0}^{T} \int_{\Omega} \bigg| \nabla \sqrt{\rhoDtpm} \bigg|^{2} \dx  \dt \leq C_{\star}.
\end{equation}

In this generalised setting, obtaining a uniform bound on the norm of $c^{\Delta t, +}$ is not achievable by choosing $-\chi_{[0,t_n]} c^{\Delta t, +}$ as a test function in \eqref{general WeakFormSemiDiscTime noL} because, the term  
$ \int_\Omega \nabla \rhoDtp \cdot \nabla c^{\Delta t, +} \dx$
 does not have a fixed sign, as it would have had it if $\LL=-\Delta$ (see \cite{Fronzoni2025}). Nevertheless, the restriction on the fractional order $s \in (1/2, 1)$ and similar observations to the ones in Remark \ref{uniform control gradient c with large s} can give us control on the gradient of $c^{\Delta t, +}$ in terms of the $L^2(0, T, L^2(\Omega))$ norm of $(\rhoDtp)^\diamond$, which will allow us to perform the passage to the limit in $\Delta t$ for the potential as well. In fact, we have 
\begin{align}
    \int_{\Omega} |\nabla c^{\Delta t, +}|^2 \dx &\leq C (\LL_{\mathrm{N}} c^{\Delta t, +}, c^{\Delta t, +}) = C (\LL_{\mathrm{N}}^{1-s} (\rhoDtp)^\diamond, \LL_{\mathrm{N}}^{-s} (\rhoDtp)^\diamond) \nonumber \\
    &= C (\LL_{\mathrm{N}}^{1-2s} (\rhoDtp)^\diamond,  (\rhoDtp)^\diamond) \nonumber \\
    &= C \sum_{k=1}^{\infty} \lambda_k^{1-2s}((\rhoDtp)^\diamond)_k^2 \leq C \lambda_1^{1-2s}\sum_{k=1}^{N_h} ((\rhoDtp)^\diamond)_k^2 \nonumber \\
    &= C \lambda_1^{1-2s} \| ( \rhoDtp)^{\diamond} \|_{L^2(\Omega)}^2. \label{ext general control on grad cDt}
\end{align}

We shall assume throughout the rest of the section that the test functions belong to the function space $L^{2}(0, T, W^{1, \infty}(\Omega))$. Our objective is to derive a (uniform in $\Delta t$) bound on the time-derivative of $\rhoDt$ in terms of the $L^{2}(0, T, W^{1, \infty}(\Omega))$ norm of the test function, followed by the use of Sobolev embedding, which will enable us to apply a variant of the Aubin--Lions compactness lemma, known as Dubinski\u{\i}'s compactness theorem.  From \eqref{general WeakFormSemiDiscTime noL} we have that, for a test function $\phi \in L^{2}(0, T, W^{1, \infty}(\Omega))$,
\begin{align*}
    \Bigg| \int_{0}^{T} \int_{\Omega} \frac{\partial \rhoDt}{\partial t} \phi \dx\dt \Bigg| \leq \underbrace{\Bigg| \int_{0}^{T} \int_{\Omega} \nabla \rhoDtp \cdot \nabla \phi \dx\dt \Bigg|}_{\circled{1}} + \underbrace{\Bigg| \int_{0}^{T} \int_{\Omega} \rhoDtp A(x)\nabla c^{\Delta t, +} \cdot \nabla \phi \dx\dt \Bigg|}_{\circled{2}}.
\end{align*}
We recall that we have 
\begin{subequations}\label{SmallProp1}
    \begin{equation}\label{Positivity}
    \rhoDtpm \geq 0 \quad \textrm{a.e.~on } \Omega \times (0, T), \quad \mbox{and}
    \end{equation}
    \begin{equation} \label{UnitDensityboundDt}
        \frac{1}{|\Omega|} \int_{\Omega} \rhoDtpm(t) = 1 \quad \textrm{for a.e. } t \in (0, T).
    \end{equation}
\end{subequations}
Moreover we notice that the bound on the first term on the left-hand side of \eqref{ExtUniformBoundSpatialDer} gives us a (uniform in $\Delta t$) bound on the $L^\infty(0,T;L^{1}(\Omega))$ norm of $\rhoDtp$, because the function $s \in [0,\infty) \mapsto G(s) \in [0,\infty)$ has superlinear growth as $s \rightarrow +\infty$. 

For the term $\circled{1}$ we then have by \eqref{ExtUniformBoundSpatialDer} and (\ref{SmallProp1}) that
\begin{align*}
    \circled{1} &\leq \int_{0}^{T} \|\nabla \phi\|_{L^{\infty}(\Omega)} \int_{\Omega} |\nabla \rhoDtp| \dx \dt= 2 \int_{0}^{T} \|\nabla \phi\|_{L^{\infty}(\Omega)} \int_{\Omega} \sqrt{\rhoDtp}\, \bigg| \nabla \sqrt{\rhoDtp} \bigg| \dx \dt\\
    &\leq 2 \int_{0}^{T} \|\nabla \phi\|_{L^{\infty}(\Omega)} \Bigg( \int_{\Omega} \rhoDtp \dx \Bigg)^{\frac{1}{2}} \Bigg( \int_{\Omega}\bigg| \nabla \sqrt{\rhoDtp} \bigg|^{2} \dx \Bigg)^{\frac{1}{2}} \dt  \leq C_{\ast} \Bigg( \int_{0}^{T} \|\nabla \phi\|_{L^{\infty}(\Omega)}^{2} \dt \Bigg)^{\frac{1}{2}}.
\end{align*}
For the term $\circled{2}$  we use the Cauchy-Schwarz inequality combined with \eqref{ext general control on grad cDt}, the uniform ellipticity of $A(x)$ and \eqref{general InfNormBoundDiscTime} and, with a similar computation as for term $\circled{1}$ above, we get 
\begin{equation*}
    \circled{2} \leq C_{\ast} \Bigg( \int_{0}^{T} \|\nabla \phi\|_{L^{\infty}(\Omega)}^{2} \dt \Bigg)^{\frac{1}{2}}.
\end{equation*}
By combining the bounds on the terms $\circled{1}$ and $\circled{2}$ and noting that by  the Sobolev embedding theorem $H^{d+1}(\Omega)$ is continuously embedded in $W^{1,\infty}(\Omega)$ for $d=2,3$, we then have that 
\begin{equation} \label{ExtUniformBoundTimeDer}
\Bigg| \int_{0}^{T} \int_{\Omega} \frac{\partial \rhoDt}{\partial t} \phi \,\dx  \dt\Bigg| \leq C_{\ast} \Bigg( \int_{0}^{T} \|\nabla \phi\|_{L^{\infty}(\Omega)}^{2} \dt \Bigg)^{\frac{1}{2}}\leq C_{\ast} \Bigg( \int_{0}^{T} \|\phi\|_{H^{d+1}(\Omega)}^{2} \dt \Bigg)^{\frac{1}{2}}.
\end{equation}

\subsection{Passage to the limit $\Delta t \rightarrow 0_{+}$}

We are now ready to take the limit as $\Delta t \to 0_+$.  
We first collect the $\Delta t$-independent bounds \eqref{ExtUniformBoundSpatialDer}, \eqref{ExtUniformBoundSpatialDerPlus}, \eqref{ExtUniformBoundTimeDer} and the bound \eqref{ext general control on grad cDt};  we have shown that there exists a constant $C_{\star}$, independent on $\Delta t$, such that, for $\beta=d+1$, 
\begin{align}\label{ExtUniformBoundDer}
\begin{aligned}
    \sup_{t \in (0,T]} \int_{\Omega} G(\rhoDtp(t)) \dx & + \int_{0}^{T} \int_{\Omega} (\rhoDtp - \rhoDtm)^{2} \dx\dt \\
    &+ \int_{0}^{T} \int_{\Omega} \bigg|\nabla \sqrt{\rhoDtpm}\bigg|^{2} \dx\dt  + \int_{0}^{T}\bigg\| \frac{\partial \rhoDt}{\partial t} \bigg\|^{2}_{H^{-\beta}(\Omega)} \dt \leq C_{\star},
\end{aligned}
\end{align}
\begin{align}\label{ExtBoundDer c}
\begin{aligned}    
     \int_{\Omega} |\nabla c_{L}^{\Delta t, +}|^2 \leq C_\ast \| ( \rhoLDtp)^{\diamond} \|_{L^2(\Omega)}^2
\end{aligned}
\end{align}

For passing to the limit in $\Delta t \to 0_+$ the crucial result that we will use is Dubinski\u{\i}'s compactness theorem in seminormed sets; cf.  \cite{dubinskii1965weak} and \cite{barrett2012dubinskii}. 
The statement of the theorem is contained in the Appendix \ref{dubinski appendix}, Theorem \ref{Dubinsky}.

\begin{theo} \label{ExtTheoConvLDt}
 For any initial datum $\rho_0 \in  L^{\infty}(\Omega)$, there exists a subsequence of $\{ \rhoDtpm \}_{\Delta t>0}$ (not indicated) and a function $\widehat{\rho}$ such that 
\begin{equation*}
    \widehat{\rho} \in L^{\infty}(0, T, L^{\infty}(\Omega)) \cap H^{1}(0, T, H^{-\beta}(\Omega)), \quad \beta = d+1, 
\end{equation*}
with $\widehat{\rho} \geq 0$ almost everywhere on $\Omega\times (0,T)$ and $\frac{1}{|\Omega|} \int_{\Omega}\widehat{\rho}(x.t) \dx = 1$ for a.e.~$t \in [0, T]$, and $\widehat{c}(\cdot,t) \in \mathcal{H}^{s}_\diamond(\Omega) \cap H^1(\Omega)$ defined as $-\LN^{s} \widehat{c}(\cdot,t) = \widehat{\rho}^{\diamond}(\cdot,t)$ in $\Omega$  for $t \in (0,T]$ such that, for all $p \in [1, \infty)$, 
\begin{subequations}
    \begin{alignat}{2} \label{ExtConvL1}
\rho^{\Delta t(,\pm)} & \to \widehat{\rho} &&\qquad\text{strongly in } L^{p}(0,T;L^{1}(\Omega)), \\ \label{ExtConvL2}
\nabla \sqrt{\rho^{\Delta t(,\pm)}} & \to \nabla \sqrt{\widehat{\rho}} &&\qquad\text{weakly in } L^{2}(0,T;L^{2}(\Omega;\mathbb{R}^d)), \\
\label{ExtConvL3}
\frac{\partial \rhoDt}{\partial t} & \to \frac{\partial \widehat{\rho}}{\partial t} &&\qquad \text{weakly in } L^{2}(0,T; H^{-\beta}(\Omega)), \\
\label{ExtConvL3a}
\rho^{\Delta t(,\pm)} & \to \widehat{\rho} &&\qquad \text{weakly-$\ast$ in } L^{\infty}(0,T;L^{\infty}(\Omega)),\\
\label{ExtConvL5}
\rho^{\Delta t(,\pm)} & \to \widehat{\rho} &&\qquad \text{strongly in } L^{p}(0,T;L^{p}(\Omega)), \\
\label{ExtConvL6}
c^{\Delta t(,\pm)} & \to \widehat{c} &&\qquad \text{strongly in }  L^{p
}(0, T; \mathcal{H}^{s}(\Omega)), \\
\label{ExtConvL7}
\nabla c^{\Delta t, +} & \to \nabla \widehat{c} &&\qquad  \text{weakly in }  L^{2}(0, T; L^{2}(\Omega;\mathbb{R}^d)).
    \end{alignat}
The function $\widehat{\rho}$ is a global weak solution to the problem 
\begin{gather} \label{ExtWeakSolutionEq}
    -\int_{0}^{T} \int_{\Omega} \widehat{\rho} \frac{\partial \phi}{\partial t}  \dx \dt+ \int_{0}^{T} \int_{\Omega}  \nabla \widehat{\rho} \cdot \nabla \phi  \dx \dt+ \int_{0}^{T} \int_{\Omega} \widehat{\rho} \, A(x) \nabla \widehat{c} \cdot \nabla \phi  \dx \dt = \int_{\Omega} \rho_{0} \;\phi|_{t=0} \dx \\
    \nonumber \text{for all } \phi \in W^{1,1}(0, T; H^{\beta}(\Omega)) \text{ such that } \phi(\cdot, T) = 0 . \end{gather}
In addition, the function $\widehat{\rho}$ is weak-$\ast$ continuous as a mapping from $[0, T]$ to $L^{\infty}(\Omega)$
and it is weakly continuous as a mapping from $[0,T]$ to $L^1(\Omega)$. 
    
\end{subequations}
\end{theo}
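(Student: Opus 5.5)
The plan is the standard compactness argument built on the $\Delta t$-independent bounds \eqref{ExtUniformBoundDer}, \eqref{ExtBoundDer c} and the $L^\infty$ bound \eqref{general InfNormBoundDiscTime}. Throughout, $L>\|\rho_0\|_{L^\infty(\Omega)}$ is fixed, so that $\beta^L$ acts as the identity.

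\textbf{Compactness of the density.} By \eqref{general InfNormBoundDiscTime}, $\rho^{\Delta t(,\pm)}$ is bounded in $L^\infty(0,T;L^\infty(\Omega))$. This gives \eqref{ExtConvL3a} along a subsequence. The bound on $\partial_t\rho^{\Delta t}$ in $L^2(0,T;H^{-\beta}(\Omega))$ gives \eqref{ExtConvL3}.

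For strong compactness, I would apply Dubinski\u{\i}'s theorem (Theorem \ref{Dubinsky}) to the seminormed cone $\mathcal{M}=\{\varphi\ge0:\ \varphi\in L^1(\Omega),\ \sqrt{\varphi}\in H^1(\Omega)\}$, with seminorm $[\varphi]=\|\sqrt\varphi\|_{H^1(\Omega)}^2$. This cone embeds compactly into $L^1(\Omega)$, which in turn embeds continuously into $H^{-\beta}(\Omega)$ for $\beta=d+1$. The inputs are:
\begin{itemize}
\item the mass identity \eqref{UnitDensityboundDt} together with \eqref{ExtUniformBoundSpatialDerPlus}, which bound $\int_0^T[\rho^{\Delta t}]\,\dt$;
\item the time-derivative bound above.
\end{itemize}
Dubinski\u{\i} then yields strong convergence of $\rho^{\Delta t}$ in $L^1(0,T;L^1(\Omega))$.

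To pass this to $\rho^{\Delta t,\pm}$, I would use the bound on $\int_0^T\!\!\int_\Omega(\rho^{\Delta t,+}-\rho^{\Delta t,-})^2$ in \eqref{ExtUniformBoundDer}. Since $\rho^{\Delta t}-\rho^{\Delta t,\pm}$ is pointwise bounded by $|\rho^{\Delta t,+}-\rho^{\Delta t,-}|$, all three sequences share the same limit. The uniform $L^\infty$ bound and interpolation then upgrade the convergence to \eqref{ExtConvL1} and \eqref{ExtConvL5} for every $p<\infty$.

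Weak lower semicontinuity of the norm keeps $\widehat\rho\in L^\infty$. The limit inherits $\widehat\rho\ge0$ and unit mean from \eqref{SmallProp1}. For \eqref{ExtConvL2}, weak compactness in $L^2$ combined with strong convergence of $\sqrt{\rho^{\Delta t}}$ (from $|\sqrt a-\sqrt b|\le|a-b|^{1/2}$) identifies the weak limit as $\nabla\sqrt{\widehat\rho}$.

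\textbf{Potential.} I would define $\widehat c=-\LN^{-s}\widehat\rho^\diamond$. By linearity, $c^{\Delta t,+}-\widehat c=-\LN^{-s}(\rho^{\Delta t,+}-\widehat\rho)^\diamond$, and
\[
\|\LN^{-s}f\|_{\mathcal H^s_\diamond}=\|\LN^{-s/2}f\|_{L^2}\le\lambda_1^{-s/2}\|f\|_{L^2},
\]
with the smallest eigenvalue of $\LN$ on $L^2_\diamond$ playing the role of $\lambda_1$. Combined with \eqref{ExtConvL5} for $p\ge2$, this gives \eqref{ExtConvL6}.

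Estimate \eqref{ext general control on grad cDt}, together with the $L^2$ bound on $\rho$, bounds $\nabla c^{\Delta t,+}$ in $L^2(0,T;L^2(\Omega;\mathbb R^d))$. Uniqueness of limits then gives \eqref{ExtConvL7}, and also $\widehat c\in H^1(\Omega)$ for a.e.\ $t$.

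\textbf{Limit equation.} I would take $\phi\in W^{1,1}(0,T;H^\beta(\Omega))$ with $\phi(\cdot,T)=0$, noting that $H^\beta\subset W^{1,\infty}$. In \eqref{general WeakFormSemiDiscTime noL}, integrate by parts in time, so that the initial term becomes $\int_\Omega\rho^0\phi(0)\,\dx$; since $\rho^0\rightharpoonup\rho_0$ weakly in $L^2(\Omega)$, this converges to $\int_\Omega\rho_0\phi(0)\,\dx$. The diffusion term converges because $\nabla\rho^{\Delta t,+}=2\sqrt{\rho^{\Delta t,+}}\nabla\sqrt{\rho^{\Delta t,+}}$ is a product of a strongly convergent factor (in $L^2$) and a weakly convergent one.

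\textbf{Main obstacle.} The nonlinear drift term $\int_0^T\!\!\int_\Omega\rho^{\Delta t,+}A\nabla c^{\Delta t,+}\cdot\nabla\phi$ is the delicate step. The intended approach is to write $A\nabla\phi\,\rho^{\Delta t,+}$, which converges strongly in $L^2(0,T;L^2)$ because $A\nabla\phi\in L^2(0,T;L^\infty)$, $\rho^{\Delta t,+}$ is bounded in $L^\infty$, and $\rho^{\Delta t,+}$ converges strongly in $L^p$. This strongly convergent factor is paired with $\nabla c^{\Delta t,+}$, which converges only weakly, and such a strong–weak pair converges. The care needed is in the time integrability: one must check that $\phi\in W^{1,1}(0,T;H^\beta)\subset C([0,T];W^{1,\infty})$ so that the strong convergence genuinely holds in $L^2$ in time. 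A density argument then extends the identity to the full test class.

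\textbf{Continuity properties.} $\widehat\rho\in H^1(0,T;H^{-\beta})$ implies $\widehat\rho\in C([0,T];H^{-\beta})$. Combined with the $L^\infty(0,T;L^\infty)$ bound and the density of $H^\beta$ in $L^1$, this gives weak-$\ast$ continuity into $L^\infty(\Omega)$ and weak continuity into $L^1(\Omega)$.
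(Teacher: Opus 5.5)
Your proposal is correct and follows essentially the same route as the paper. Both apply Dubinski\u{\i}'s theorem in $L^1(\Omega)$ with $\mathcal{A}_1=H^{-\beta}(\Omega)$, using the cone of nonnegative functions with $\sqrt{\varphi}\in H^1(\Omega)$. Both then transfer the convergence to $\rho^{\Delta t,\pm}$ through the jump bound, upgrade it with the $L^\infty$ estimate, control $c$ by the spectral stability estimate, and treat the drift term as a strong--weak pairing.

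One small point, which the paper's own write-up shares: to conclude that $\rho^{\Delta t}$ and $\rho^{\Delta t,\pm}$ have the same limit, you need $\int_0^T\!\!\int_\Omega(\rho^{\Delta t,+}-\rho^{\Delta t,-})^2\,\dx\,\dt\le C\,\Delta t$. This comes from the $1/(2\Delta t\,\|\rho_0\|_{L^\infty(\Omega)})$ prefactor in \eqref{ExtUniformBoundSpatialDer}, not merely from the $\Delta t$-independent bound recorded in \eqref{ExtUniformBoundDer}.
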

 \begin{proof}
     In the notation of Theorem \ref{Dubinsky}, we choose $\mathcal{A}_{0}, \mathcal{A}_{1}$ and $\mathcal{C}$ as in the discussion following the statement of  Dubinski\u{\i}'s theorem (Theorem \ref{Dubinsky}). Then, by taking $p=1$ and $p_{1}=2$ we have that 
    \begin{equation*}
        \Bigg \{ \varphi: [0, T] \to \mathcal{C}: [\varphi]_{L^{p}(0, T; \mathcal{C})} + \bigg\| \frac{\mathrm{d}\varphi}{\dt}\bigg\|_{L^{p_{1}}(0, T; \mathcal{A}_{1})} < \infty \Bigg \}
    \end{equation*}
    is compactly embedded in $L^{1}(0, T; \mathcal{A}_{0}) = L^{1}(0, T; L^{1}(\Omega))$. By using the bounds on the last two terms on the left-hand side of \eqref{ExtUniformBoundDer}, we have that there exists a subsequence of $\{\rhoDt \}_{\Delta t >0}$ (not indicated), which converges strongly in $L^{1}(0, T; L^{1}(\Omega))$ to an element $\widehat{\rho} \in L^{1}(0, T; L^{1}(\Omega))$ as $\Delta t \to 0$. The strong convergence in $L^{1}(0, T; L^{1}(\Omega))$ ensures almost everywhere convergence of a subsequence (not indicated) to $\widehat{\rho}$.
    We have  by  \eqref{LinInterptime} and the bound on the second term on the left-hand side of \eqref{ExtUniformBoundDer} that 
    \begin{equation} \label{IneqDeltaT}
        \Bigg( \int_{0}^{T} \int_{\Omega} | \rhoDt - \rhoDtp| \dx\dt \Bigg)^{2} \leq 
        T|\Omega| \int_{0}^{T} \int_{\Omega} (\rhoDtp - \rhoDtm)^{2} \dx\dt \leq C_{\star} T |\Omega|.
    \end{equation}
    We apply the triangle inequality in $L^{1}(0, T; L^{1}(\Omega))$, together with the strong convergence of the sequence $\{ \rhoDt \}_{\Delta t>0}$ in $L^{1}(0,T; L^{1}(\Omega))$ to deduce strong convergence in $L^{1}(0,T; L^{1}(\Omega))$ to the same element $\widehat{\rho} \in L^{1}(0,T; L^{1}(\Omega))$. ~The inequality  (\ref{IneqDeltaT}) also implies strong convergence of $\{ \rhoDtm \}_{\Delta t>0}$ in $L^{1}(0,T; L^{1}(\Omega))$ to $\widehat{\rho}$. This completes the proof of \eqref{ExtConvL1} for $p=1$. 
    
    From (\ref{SmallProp1}) we have that 
    \begin{equation*}
        \|\rho^{\Delta t(,\pm)}(t)\|_{L^{1}(\Omega)} \leq |\Omega|
    \end{equation*}
    for a.e.~$t \in (0,T]$. This means that the sequences $\{ \rho^{\Delta t (,\pm)} \}_{\Delta t>0}$ are bounded in $L^{\infty}(0,T;L^{1}(\Omega))$. We can use \cite[Lemma 4.4]{Fronzoni2025} together with the strong convergence of the sequences to $\widehat{\rho}$ in $L^{1}(0,T;L^{1}(\Omega))$ to deduce that we have strong convergence in $L^{p}(0,T;L^{1}(\Omega))$ to the same limit for all values of $p \in [1, \infty)$. We have then completed the proof of \eqref{ExtConvL1}. 
    
    Because strong convergence in $L^{p}(0,T;L^{1}(\Omega))$ for $p \in [1, \infty)$ implies convergence almost everywhere on $\Omega \times (0,T])$ of a subsequence (not indicated), it follows from (\ref{Positivity}) that $\widehat{\rho} \geq 0$ a.e.~on $\Omega \times (0,T)$. By applying Fubini's theorem we have that 
    \begin{equation*}
        \int_{\Omega} |\rho^{\Delta t(,\pm)} - \widehat{\rho} | \dx \to 0 \quad \text{as } \Delta t \to 0_+ \text{ for 
 a.e. } t \in (0, T].
    \end{equation*}
Hence we have by (\ref{UnitDensityboundDt}) that
    \begin{equation*} 
       \frac{1}{|\Omega|} \int_{\Omega} \widehat{\rho}(t) \dx = 1 \quad \text{for a.e. } t \in (0, T].
    \end{equation*}
    
    We shall now prove \eqref{ExtConvL2}. We notice that since $|\sqrt{c_{1}} - \sqrt{c_{2}}| \leq \sqrt{|c_{1}-c_{2}|}$ for any two nonnegative real numbers $c_{1}$ and $c_{2}$, the strong convergence \eqref{ExtConvL1} directly implies that, as $\Delta t \to 0_+$,
    \begin{equation*}
        \sqrt{\rho^{\Delta t(,\pm)}} \to \sqrt{\widehat{\rho}} \qquad  \text{strongly in } L^{p}(0, T; L^{2}(\Omega)) \quad \text{for all } p \in [1, \infty).
    \end{equation*}
The bound on the third term on the left-hand side of \eqref{ExtUniformBoundDer} implies the existence of a subsequence (not indicated) and an element $G \in L^{2}(0, T; L^{2}(\Omega;\mathbb{R}^d))$ such that 
    \begin{equation*}
        \nabla \sqrt{\rho^{\Delta t(,\pm)}} \to G \qquad \text{weakly in } L^{2}(0, T; L^{2}(\Omega;\mathbb{R}^d)).
    \end{equation*}
We have therefore that, for a test function $\eta \in C([0,T]; C^{\infty}_{0}(\Omega;\mathbb{R}^d))$,
    \begin{equation*}
       \int_{0}^{T} \int_{\Omega} G \cdot \eta \,\dx \dt \leftarrow -\int_{0}^{T} \int_{\Omega} \sqrt{\rho^{\Delta t(,\pm)}} \, \text{div} \, \eta \dx \dt \rightarrow - \int_{0}^{T} \int_{\Omega} \sqrt{\widehat{\rho}} \,\, \text{div} \, \eta \dx \dt.
    \end{equation*}
Therefore we have the equality 
    \begin{equation*}
        \int_{0}^{T}\int_{\Omega} G \cdot \eta\, \dx \dt =  - \int_{0}^{T}\int_{\Omega} \sqrt{\widehat{\rho}} \,\, \text{div} \, \eta \dx \dt,
    \end{equation*}
    and this means that $G$ is the distributional gradient of $\sqrt{\widehat{\rho}}$. As $G \in L^{2}(0, T; L^{2}(\Omega;\mathbb{R}^d))$ it follows that 
    \begin{equation*}
        G = \nabla \sqrt{\widehat{\rho}} \in L^{2}(0, T; L^{2}(\Omega;\mathbb{R}^d))
    \end{equation*}
    and hence \eqref{ExtConvL2} has been proved. 
    
    The weak convergence result \eqref{ExtConvL3} follows from the uniform bound on the last term on the left-hand side of \eqref{ExtUniformBoundDer} and the weak compactness of bounded balls in the Hilbert space $L^{2}(0,T;H^{-\beta}(\Omega))$. 

    The weak-$\ast$ convergence result \eqref{ExtConvL3a} follows from the uniform bound \eqref{general InfNormBoundDiscTime} stated in Lemma \ref{general InfNormDecayDiscTime}, the weak-$\ast$ compactness of bounded balls in the Banach space $L^\infty(0,T;L^\infty(\Omega))$, and the uniqueness of the weak limit. 

    The strong convergence result \eqref{ExtConvL5} is a direct consequence of \eqref{ExtConvL1} and the uniform bound \eqref{general InfNormBoundDiscTime} stated in Lemma \ref{general InfNormDecayDiscTime}, using \cite[Lemma 4.4]{Fronzoni2025}.

    Then, \eqref{ExtConvL6} follows directly from \eqref{ExtConvL5} \eqref{general StabFracNeu} and \eqref{FracPoincareUse}; that is,
    \begin{align*}
\int_0^T \| c^{\Delta t (,\pm)}(t) - \widehat{c}(t) \|_{{\mathcal{H}}^s_\diamond(\Omega)}^p \dt \leq C \int_0^T \| \rhoDtpm(t) - \widehat{\rho}(t) \|_{L^{2}_\diamond(\Omega)}^p \dt
    \end{align*}
    for all $p \in [1,\infty)$.
    
    Let us now consider the $L^{2}(0, T; L^{2}(\Omega;\mathbb{R}^d))$ norm of $\nabla c^{\Delta t, +}$. By the inequality \eqref{ExtBoundDer c}
    we have 
    \begin{align*} 
    \int_{0}^{T} \int_{\Omega} \big|\nabla c^{\Delta t, +} \big|^{2} \dx\dt &\leq C \int_{0}^{T} \int_{\Omega} ((\rhoDtp)^\diamond)^2 \dx \dt
    \end{align*}
    and thanks to the uniform bound on the $L^{2}(0, T; L^{2}(\Omega))$ norm of $\rhoDtp$ from the convergence \eqref{ExtConvL5} we have that the $L^{2}(0, T; L^{2}(\Omega;\mathbb{R}^d))$ norm of the gradient of $c^{\Delta t, +}$ is uniformly bounded and therefore $\nabla c^{\Delta t, +}$ converges weakly to an element that we call $G$, as $\Delta t \rightarrow 0_+$. By \eqref{ExtConvL6} with $s=0$ we have that $\nabla c^{\Delta t, \pm}$ converges strongly to $\nabla \widehat{c}$ in $L^{2}(0, T, H^{-1}(\Omega;\mathbb{R}^d))$, but then $\nabla \widehat{c}$ must coincide with $G$ 
    because of the uniqueness of the weak limit. 
    
    We now want to use the convergence results we have just proved to pass to the limit as $\Delta t \to 0_{+}$ in equation \eqref{general WeakFormDisctime}. Throughout the argument we will consider test functions $\phi \in C^{1}([0, T]; C^{\infty}(\overline{\Omega}))$ such that $\phi(\cdot, T) = 0$. Note that the set of all such test functions is dense in the set of functions belonging to $W^{1,1}(0, T; H^{\beta}(\Omega))$ and vanishing at $t=T$ (in the sense of the trace theorem in $W^{1,1}(0,T)$), which is continuously embedded in $L^{2}(0, T; H^{\beta}(\Omega))$; therefore the use of such test functions is fully justified for the purposes of our argument. 
    
    We begin by considering the first term in \eqref{general WeakFormSemiDiscTime noL} and use integration by parts with respect to $t$ to deduce that
    \begin{equation*}
        \int_{0}^{T} \int_{\Omega} \frac{\partial \rhoDt}{\partial t} \phi \dx\dt = -\int_{0}^{T} \int_{\Omega} \rhoDt \frac{\partial \phi}{\partial t} \dx\dt - \int_{\Omega} \rho^{0} \; \phi|_{t=0} \dx
    \end{equation*}
    for all $\phi \in C^{1}([0,T], C^{\infty}(\overline{\Omega}))$ such that $\phi(\cdot , T)=0$. Moreover, 
\begin{equation*}
        \lim_{\Delta t \to 0_{+}} \rho^{0} = \rho_{0} \quad \text{weakly in } L^{2}(\Omega).
    \end{equation*}
    Therefore using \eqref{ExtConvL1} we immediately have that, as $\Delta t \to 0_{+}$,
    \begin{equation*}
        \int_{0}^{T} \int_{\Omega} \frac{\partial \rhoLDt}{\partial t} \phi \dx\dt \to -\int_{0}^{T} \int_{\Omega} \widehat{\rho} \frac{\partial \phi}{\partial t} \dx\dt - \int_{\Omega} \rho_{0} \;\phi|_{t=0} \dx.
    \end{equation*}
    The second term in \eqref{general WeakFormSemiDiscTime noL} will be dealt with by decomposing it as follows:
\begin{align*} \int_{0}^{T} \int_{\Omega}  \nabla \rhoDtp \cdot \nabla \phi \dx\dt &= \underbrace{2\int_{0}^{T} \int_{\Omega} \bigg( \sqrt{\rhoDtp} - \sqrt{\widehat{\rho}} \bigg) \nabla \sqrt{\rhoDtp} \cdot \nabla \phi \dx\dt}_{\circled{1}} \\
    & \quad + \underbrace{2\int_{0}^{T} \int_{\Omega} \sqrt{\widehat{\rho}} \,\nabla \sqrt{\rhoDtp} \cdot \nabla \phi \dx\dt}_{\circled{2}}. 
    \end{align*}
For term $\circled{1}$ we have by H\"older's inequality that 
    \begin{align*}
        |\circled{1}| & \leq 2\int_{0}^{T} \bigg( \int_{\Omega}  \Big| \sqrt{\rhoDtp} - \sqrt{\widehat{\rho}} \Big|^{2} \dx \bigg)^{\frac{1}{2}} \bigg( \int_{\Omega} \Big| \nabla \sqrt{\rhoDtp} \Big|^{2} \dx \bigg)^{\frac{1}{2}} \|\nabla \phi\|_{L^{\infty}(\Omega)} \dt \\
        &\leq 2\,\Bigg( \int_{0}^{T} \int_{\Omega} \Big| \nabla \sqrt{\rhoDtp} \Big|^{2} \dx\dt  \Bigg)^{\frac{1}{2}} \Bigg( \int_{0}^{T} \Big\| \sqrt{\rhoDtp} - \sqrt{\widehat{\rho}} \Big\|_{L^{2}(\Omega)}^{r}  \dt\Bigg)^{\frac{1}{r}} \\
        & \quad \times \Bigg( \int_{0}^{T} \|\nabla \phi\|_{L^{\infty}(\Omega)}^{\frac{2r}{r-2}} \dt \Bigg)^{\frac{r-2}{2r}},
    \end{align*}
    where $r \in (2, \infty)$. Using the bound on the third term in the inequality stated in \eqref{ExtUniformBoundDer} we have 
    \begin{align*} |\circled{1}| &\leq 2C_{\star}^{\frac{1}{2}} \Big\| \sqrt{\rhoDtp} - \sqrt{\widehat{\rho}} \Big\|_{L^{r}(0, T; L^{2}(\Omega))} \|\nabla \phi\|_{L^{\frac{2r}{r-2}}(0, T; L^{\infty}(\Omega))} \\
    & \leq 2C_{\star}^{\frac{1}{2}} \|\rhoDtp - \widehat{\rho}\|_{L^{\frac{r}{2}}(0, T; L^{1}(\Omega))}^{\frac{1}{2}} \|\nabla \phi\|_{L^{\frac{2r}{r-2}}(0, T; L^{\infty}(\Omega))},   \end{align*}
    where we have used the elementary inequality $|\sqrt{c_{1}} - \sqrt{c_{2}}| \leq \sqrt{|c_{1} - c_{2}|}$ with $c_{1}, c_{2} \geq 0$. The first factor in the last line converges to 0 as $\Delta t \to 0_+$ thanks to  \eqref{ExtConvL1}, and therefore the term $\circled{1}$ converges to 0
    for every $\phi \in C^1([0,T];C^\infty(\overline\Omega))$ as $\Delta t \to 0_+$.
    
    Concerning the term $\circled{2}$, as $\widehat\rho \in L^\infty(0,T;L^\infty(\Omega))$ and $\widehat\rho \geq 0$ a.e.~on $\Omega \times (0,T)$, we have that  $\sqrt{\widehat{\rho}}\, \nabla \phi$ belongs to $L^{2}(0, T; L^{2}(\Omega))$, and the weak convergence result \eqref{ExtConvL2} then directly implies that
    \begin{align*} \lim_{\Delta t \rightarrow 0_+} 2\int_{0}^{T} \int_{\Omega} \sqrt{\widehat{\rho}}\, \nabla \sqrt{\rhoDtp} \cdot \nabla \phi \,\dx\dt 
    &= 2\int_{0}^{T} \int_{\Omega} \sqrt{\widehat{\rho}}\, \nabla \sqrt{\widehat{\rho}} \cdot \nabla \phi \,\dx\dt  \\
    &= \int_{0}^{T} \int_{\Omega} \nabla \widehat{\rho} \cdot \nabla \phi \,\dx\dt. \end{align*}
Let us consider the final term in \eqref{general WeakFormSemiDiscTime noL}.
    By \eqref{ExtConvL7}, $\nabla c^{\Delta t, +} \rightharpoonup \nabla\widehat{c}$ weakly in $L^2(0,T;L^2(\Omega;\mathbb{R}^d))$ and therefore $A(x) \nabla c^{\Delta t, +} \rightharpoonup A(x) \nabla\widehat{c}$ weakly in $L^2(0,T;L^2(\Omega;\mathbb{R}^d))$, as $\Delta t \to 0_+$, i.e.
    \begin{align*}
        \int_{0}^{T} \int_{\Omega} A(x) \nabla c^{\Delta t, +} \cdot \nabla \phi \dx \dt  = \int_{0}^{T} \int_{\Omega}  \nabla c^{\Delta t, +} \cdot A(x) \nabla \phi \dx \dt &\to \int_{0}^{T} \int_{\Omega}  \nabla \widehat{c} \cdot A(x) \nabla \phi \dx \dt  \\
        &\quad = \int_{0}^{T} \int_{\Omega} A(x)  \nabla \widehat{c} \cdot  \nabla \phi \dx \dt,
    \end{align*}
    by the fact that if $\nabla \phi \in L^2(0, T, L^2(\Omega))$ then $A(x) \nabla \phi \in L^2(0, T, L^2(\Omega))$, thanks to $A$ being bounded in $L^\infty(\Omega)$.
    Moreover, by \eqref{ExtConvL5}, $\rho^{\Delta t,+} \rightarrow \widehat\rho$ strongly in $L^2(0,T;L^2(\Omega))$ and thus it follows that $\rho^{\Delta t,+} A(x) \nabla c^{\Delta t, +}\rightharpoonup \widehat\rho\, A(x) \nabla\widehat{c}$ weakly in $L^1(0,T;L^1(\Omega;\mathbb{R}^d))$. Therefore
    \begin{align*}
        \lim_{\Delta t \to 0_+} \int_{0}^{T} \int_{\Omega}\rho^{\Delta t, +} A(x) \nabla c^{\Delta t, +} \cdot \nabla \phi \dx\dt = \int_{0}^{T} \int_{\Omega} \widehat{\rho} \, A(x) \nabla \widehat{c} \cdot \nabla \phi \dx\dt
    \end{align*}
    for all $\phi \in C^1([0,T];C^\infty(\overline\Omega))$.

Putting these convergence results together, and noting that $C^1([0,T];C^\infty(\overline\Omega))$ is dense in the function space $W^{1,1}(0.T;H^\beta(\Omega))$ with $\beta=d+1$, we get \eqref{ExtWeakSolutionEq}.

In the end, we prove the weak-$\ast$ continuity of the function $\widehat{\rho}$ as a map from $[0, T]$ to $L^{\infty}(\Omega)$. 
As $\widehat\rho \in L^\infty(0,T;L^\infty(\Omega))$ and $\widehat\rho \in H^1(0,T; {H}^{-\beta}(\Omega)) \subset C([0,T];{H}^{-\beta}(\Omega))$, $\beta=d+1$, and $L^\infty(\Omega) = L^1(\Omega)'$, it follows from \cite[part (b) of Lemma 1.20]{Fronzoni2025} with $X= L^\infty(\Omega)$, $Y = {H}^{-\beta}(\Omega)$ and $E=L^1(\Omega)$ that $\widehat \rho \in C_{w^\ast}([0,T]; L^\infty(\Omega))$. Because $\Omega$ is bounded, $L^\infty(\Omega) \subset L^1(\Omega)$, and it then directly follows from this weak-$\ast$ continuity property of $\widehat\rho$ that $\widehat \rho \in C_w([0,T]; L^1(\Omega))$, i.e.,  $\widehat{\rho}$ is weakly continuous as a mapping from $[0,T]$ to $L^1(\Omega)$.
\end{proof}

\section{Numerical Experiments} \label{sec:5}

We report in this section some numerical experiments of our scheme for \eqref{ExtendedProb intro} and we comment on them. From a computational perspective these experiments have been obtained using a similar strategy to the one that \cite{Fronzoni2025} adopted for the porous medium equation with a fractional pressure (the case $\LL = -\Delta$). 
More precisely, the algorithm that was described in \cite{Fronzoni2024} and used in \cite{Fronzoni2025} can be easily adapted to this case. The method presented in \cite{Fronzoni2024} to rapidly compute the finite element approximation of the fractional Laplacian is based on the use of rational approximations; this technique is the result of a series contribution \cite{Hale2008, Burrage2012, Harizanov2018, filip2018rational, Zhang2025} that developed algorithms for the computation of rational approximations of functions and their application to problems in numerical linear algebra. Roughly speaking, computing the finite element approximation of the spectral fractional Laplacian reduces to approximate the fractional power $M^{s}$ of a matrix $M$, for $0<s<1$, where $M$ arises from a finite element approximation of the Laplacian. The use of rational approximations for computing the fractional power of a matrix is particularly efficient for time-dependent problems and fine meshes (see \cite{Fronzoni2024} for a detailed discussion of the benefits of this approach). 

In our case, the use of rational approximations can still be applied for computing an approximated solution of the problem 
\begin{equation} \label{eq: general fractional Poisson equation}
    \LN^s(u) = f \qquad \textrm{in } \Omega,
\end{equation}
for right-hand side datum $f \in L^2(\Omega)$.
Since the operator $\mathcal{L}_\mathrm{N}$ is positive and self-adjoint, a finite element discretization of \eqref{eq: general fractional Poisson equation} will have the form 
\begin{equation}  \label{eq: fe linear system}
    X^{-1} \Lambda^s X U = F,
\end{equation}
where the matrix $\Lambda$ is diagonal and its diagonal elements are the discrete eigenvalues $\{ \lambda_k^h \}_{k=1}^{N_h}$ of the finite-dimensional version of $\mathcal{L}_\mathrm{N}$, given in \eqref{Discrete FracPowerOp} (the computations are a straightforward analogue of \cite[Section 2.2]{Fronzoni2024}). 
Once one has an approximation of the endpoints of the interval $[\lambda_{min}^h, \lambda_{max}^h]$, corresponding to the minimum and maximum eigenvalues in $\{ \lambda_k^h \}_{k=1}^{N_h}$, a rational approximation of degree $n$ for $x^{-s}$ can be easily computed via the minimax algorithm \cite{filip2018rational}, as the authors of \cite{Fronzoni2024} did for the fractional Laplacian. If we denote the rational approximation of $x^{-s}$  by $r(x)$ and by $M$ the matrix $X^{-1} \Lambda X$, obtaining an approximation of the solution to \eqref{eq: fe linear system}, will result in computing $r(M) F$ (for comments on the efficiency of this procedure we refer the reader to \cite[Remark 2.2]{Fronzoni2024}).

Pairing the rational approximation algorithm with the finite element discretization of the equation leverages its full capabilities and allows to rapidly explore numerically different configurations. 

The most relevant aspect of the family of generalised models \eqref{ExtendedProb intro} lies in the features introduced by employing a general elliptic operator of the form $\mathcal{L} u = -\textrm{div}(A(x)\nabla u) + Q(x) u$. This choice enriches the nonlocal diffusion process that these equations describe. By varying $A(x)$ and $Q(x)$, it is possible to model different spatially heterogeneous configurations, which interact with the nonlocal effects arising from the fractional power of the operator. The advantage of having a fast algorithm in this case allows us to perform simulations with fine meshes, using different choices of $A(x)$ and $Q(x)$. 

In order to better observe the effects of employing an operator $\LN$ for the potential term $c$, we can control the parabolic regularisation term in the model through a parameter $\mu>0$. Thus, we compute simulations for the equation
\begin{equation*}
\frac{\partial \rho}{\partial t} = \mu \Delta \rho - \nabla \cdot (\rho A(x) \nabla c), \quad  - \LN^{\s} c = \rho^\diamond, \quad \frac{1}{2} < s <1.
\end{equation*}

In Figure \ref{fig: experiment 1 cross sections} and Figure \ref{fig: experiment 1 heat maps} we report the results of two numerical experiments starting from two different initial data $\rho_1, \rho_2$, with the following settings: 
\begin{enumerate}[label=I.\emph{\alph*}., ref=I.\emph{\alph*}.]
   \item  $\Omega=(-2,2)^2, \quad N_h = 2^8 \times 2^8, \quad \Delta t = 0.05$, \quad s =0.75,
    \item $A_{11}(x_1,x_2) = 10.0, \quad A_{22}(x_1,x_2)=0.1, \quad A_{12}(x_1,x_2)=A_{21}(x_1,x_2)=0$, \label{A10|01}
    \item $Q(x_1,x_2) = 100 |(x_1,x_2)|^2$,   \label{Q=|x|^2}  \item $\rho_1 = \exp(-(|x_1+1|^2 + x_2^2)/\sigma),\quad$ $\rho_2 = \exp(-(|x_1+1|^2 + x_2^2)/\sigma) + 3 \exp((|x_1-1|^2 + x_2^2)/\sigma), \quad \sigma=0.1$.  \label{eq: initial data comparison principle} 
\end{enumerate}

We can see that the spatial anisotropy introduced by the matrix $A(x)$ and the confinement introduced by the potential $Q(x)$ interact with the nonlocal interactions, generated by $c = \LN^{-s} \rho$. The spreading is stronger along the $x$ direction and slowed down away from the center of the domain. In this example, it is worth noting an important feature of this class of models: nonlocal diffusion leads to the failure of the comparison principle. In fact, in \cite[Section 6]{caffarelli2010nonlinear} the authors proved that the comparison principle does not hold for the porous medium equation with a fractional potential ($\LL = -\Delta)$ (this is shown numerically in two dimensions in \cite{Fronzoni2024}). In our case, we can observe the failure of the comparison principle in Figure \ref{fig: experiment 1 cross sections}: two initial data, ordered at time $t=0$ $\rho_1(x_1,0) \leq \rho_2(x_1,0)$, are no longer ordered as time advances. If we look at the potential $c_2 = \LN^{-s} \rho_2$ in Figure \ref{fig: experiment 1 heat maps} we can comment on the effect of using a fractional power of the operator $\LN$: $c_2$ is nonvanishing in the region between the two initial concentrations of $\rho_2$, and hence the density itself is dragged along the gradient of $c_2$, that is nonvanishing in between the concentrations; this intuitively explains why the comparison principle fails in this setting. 

In Figure \ref{fig: experiment 2 heat maps} we report the results of a numerical experiment starting from an initial datum $\rho_0$ with the following configuration:

\begin{enumerate}[label=II.\emph{\alph*}., ref=II.\emph{\alph*}.]
    \item $\Omega=(-2,2)^2, \quad N_h = 2^8 \times 2^8, \quad \Delta t = 0.05$, \quad s =0.67, 
    \item $A_{11}(x_1,x_2) = 0.1, \quad A_{22}(x_1,x_2)=10.0, \quad A_{12}(x_1,x_2)=A_{21}(x_1,x_2)=0$,  \label{A01|10}
    \item $Q(x,y) = \left\{ \begin{array}{ll} 100.0 &\textrm{if } x_2 > 0 \\ 1.0 \label{Q=step} & \textrm{if } x_2 < 0 \end{array} \right.$,     \item $\rho_0 = \exp(-(|x_1+1|^2 + x_2^2)/\sigma) + \exp(-(|x_1-1|^2 + x_2^2)/\sigma) +  \exp(-(|x_1-1|^2 + |x_2-1.95|^2)/(\sigma/2)) \newline \hspace{7mm} +  \exp(-(|x_1+1|^2 + |x_2+1.95|^2)/(\sigma/2)), \quad \sigma = 0.1$.  \label{eq: initial datum experiment explore}
\end{enumerate}

The experiment whose results are displayed in Figure \ref{fig: experiment 2 heat maps} shows that the choice of $A(x)$ and $Q(x)$  affects the nonlocality introduced by the fractional power of the differential operator $\LN$.  
The expression for $Q(x)$ reduces the value of the potential $c$ on the upper part of the domain $\Omega$, where $y>0$. Therefore, even if equal densities are placed respectively at the positions $x_1=-1.0, x_2=-1.95$ and $x_1=1.0, x_2=1.95$,  only the first induces an effect on the part of the density concentrated initially at the point $x_1=-1.0, x_2=0.0$, which becomes higher than the one symmetrically placed in $x_1=1.0, x_2=0.0$. In fact, we can see that the density at $x_1=-1.0, x_2=0.0$ starts immediately to absorb some of the concentration below it. This effect finds further validation in the fact that, at time $t=0.1$, the potential $c$ is already present in the bottom left area of the domain, in the region between the two initial concentrations placed along the axis $x_1=-1.0$. The interactions between the concentrations at $x_1=-1.0, x_2=0.0$ and $x_1=-1.0, x_2=1.95$ are absent up to larger times, when the concentrations have diffused in the upper right region of $\Omega$.

These two experiments illustrate the considerable versatility of the general model for modeling purposes, particularly for those applications that could require spatial dependence of the potential $c$. The hypotheses on $A(x)$ and $Q(x)$ allow for a variety of configurations, able to take into account many possible spatial effects.

\begin{figure}
\begin{subfigure}{0.5\textwidth}
\centering
\includegraphics[width = 0.6 \textwidth]{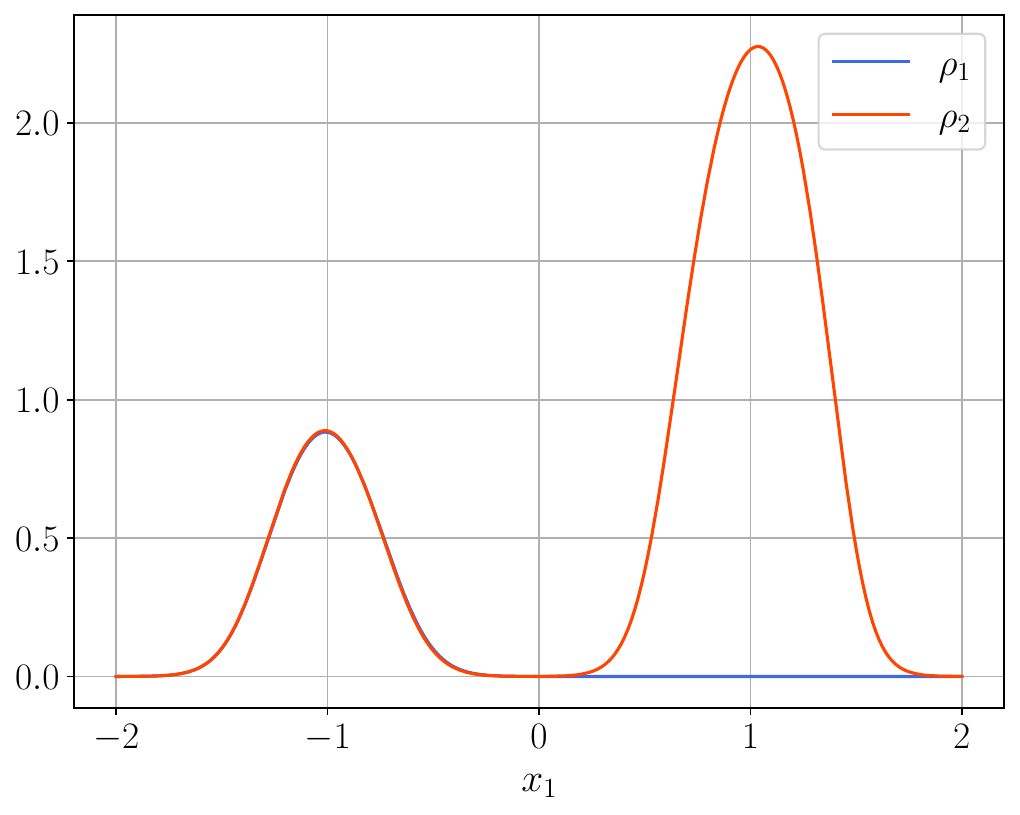}
\caption{$t=0.1$}
\end{subfigure}
\hfill
\begin{subfigure}{0.5 \textwidth}
\centering
\includegraphics[width = 0.6 \textwidth]{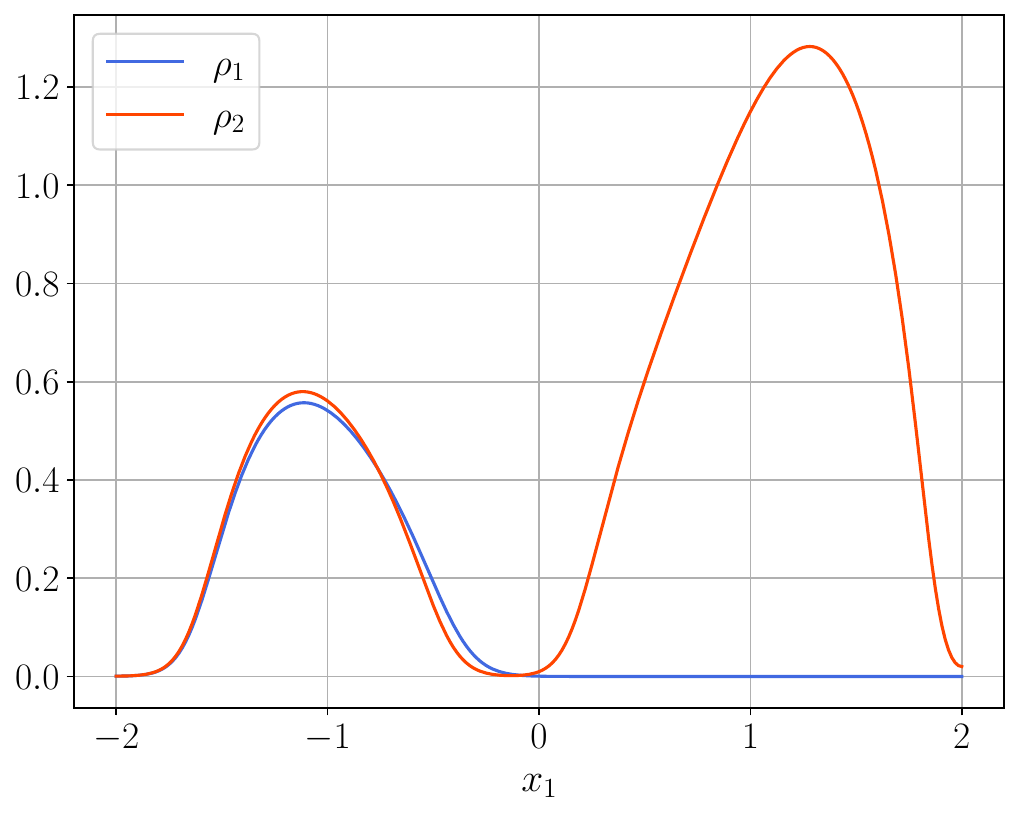}
\caption{$t=0.5$}
\end{subfigure}

\begin{subfigure}{0.5 \textwidth}
\centering
\includegraphics[width = 0.6 \textwidth]{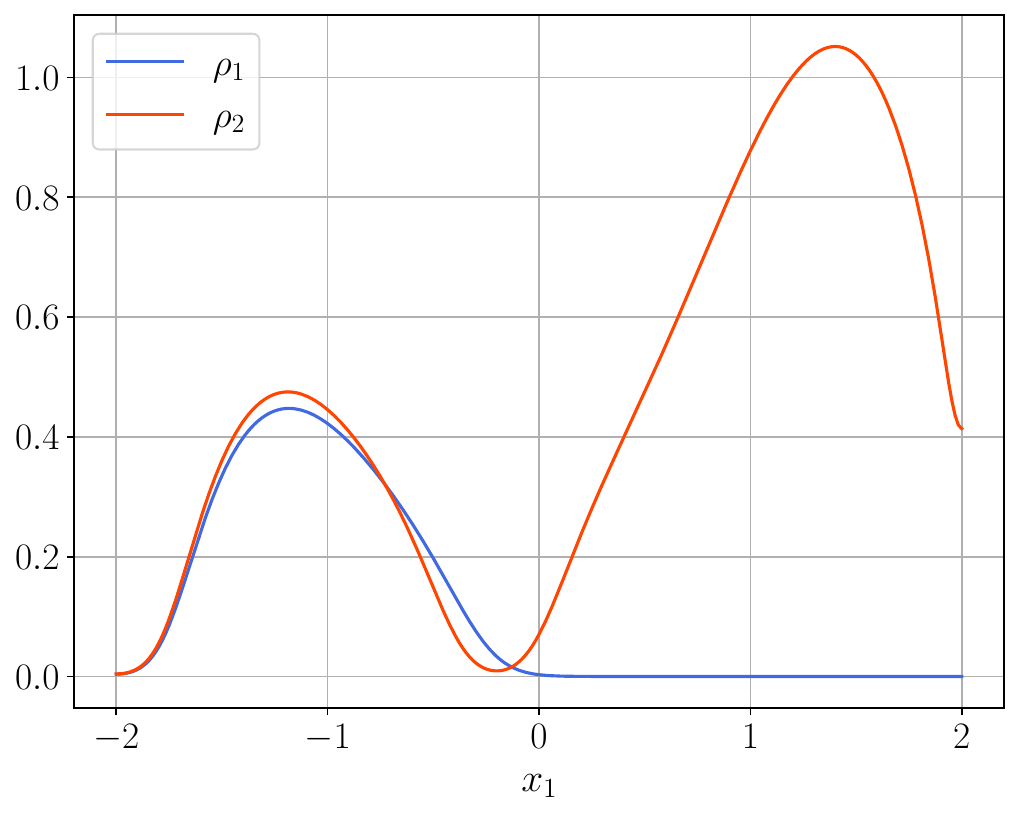}
\caption{$t=1.0$}
\end{subfigure}
\hfill
\begin{subfigure}{0.5 \textwidth}
\centering
\includegraphics[width = 0.6 \textwidth]{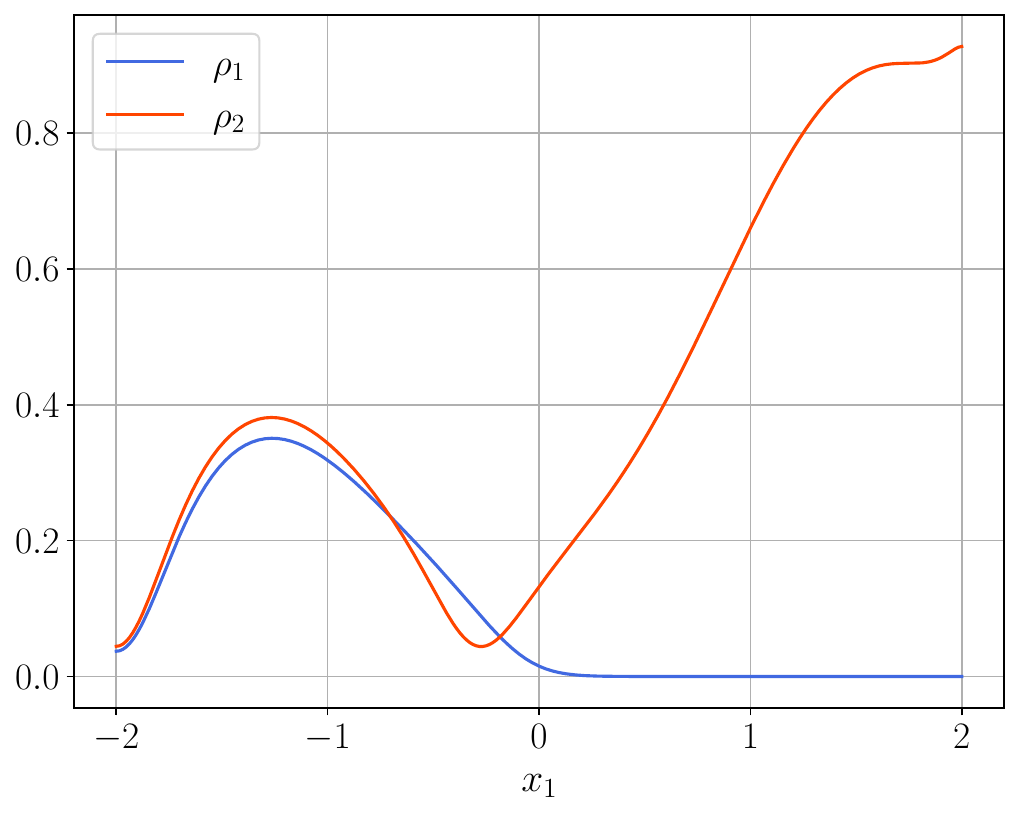}
\caption{$t=1.8$}
\end{subfigure}
\caption{Numerical solutions to the generalised porous medium equation with fractional pressure \eqref{general MainFullProb}, cross-section at $x_2 = 0$. The simulations were performed on $\Omega =(-2,2)^2$ for the fractional order $s=0.75$ with $h = |\Omega|/(2^8 \cdot 2^8)$, and time step $\Delta t = 0.05$. The matrix $A(x)$ was given by \ref{A10|01} and the function $Q(x)$ by \ref{Q=|x|^2}. The initial data were \ref{eq: initial data comparison principle}. The parabolic regularisation coefficient was $\mu = 0.01$. }
\label{fig: experiment 1 cross sections}
\end{figure}

\begin{figure}
\begin{subfigure}{0.48 \textwidth}
\centering
\includegraphics[width = \textwidth]{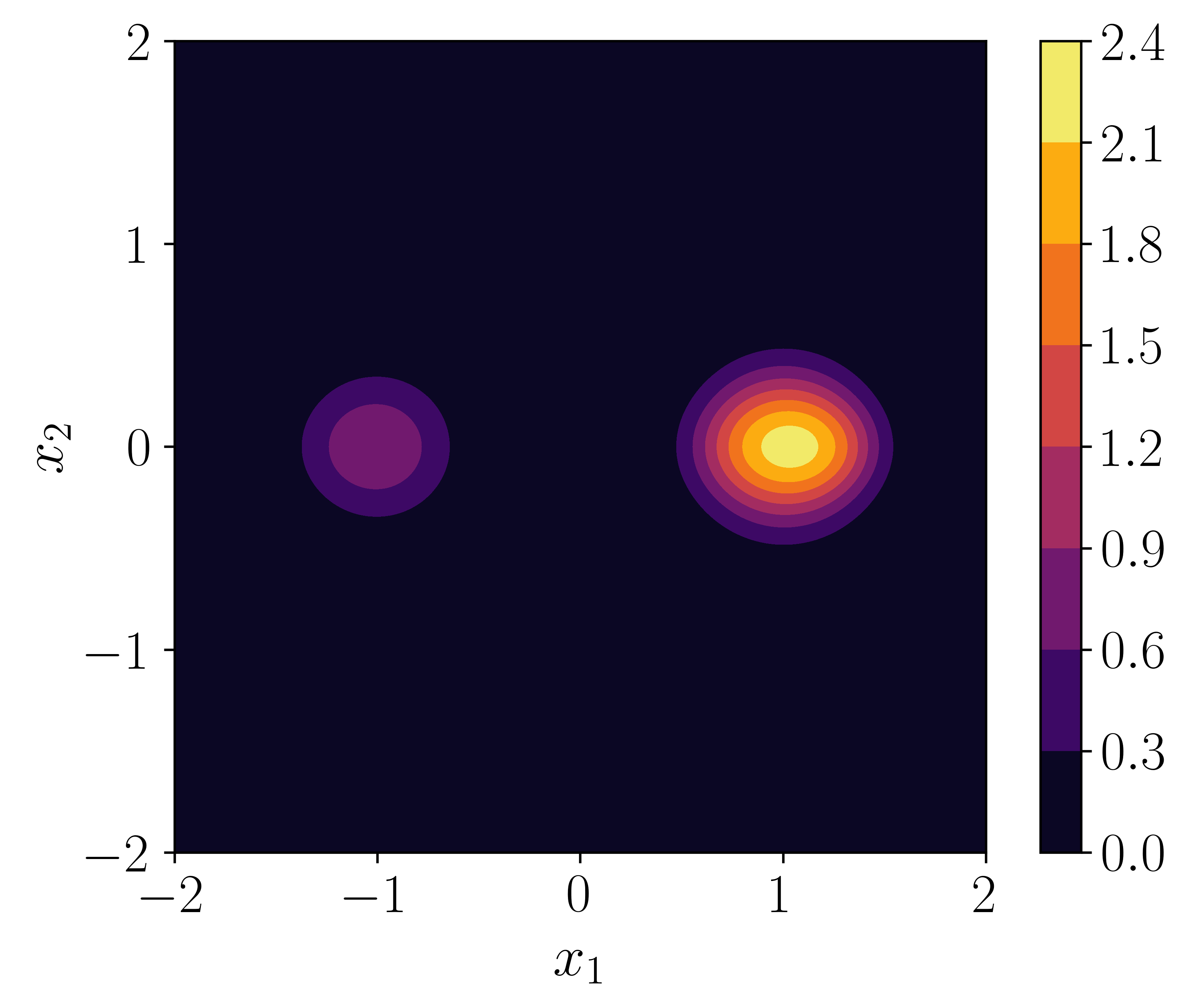}
\caption{$\rho_2$ at $t=0.1$}
\end{subfigure}
\hfill
\begin{subfigure}{0.48 \textwidth}
\centering
\includegraphics[width = \textwidth]{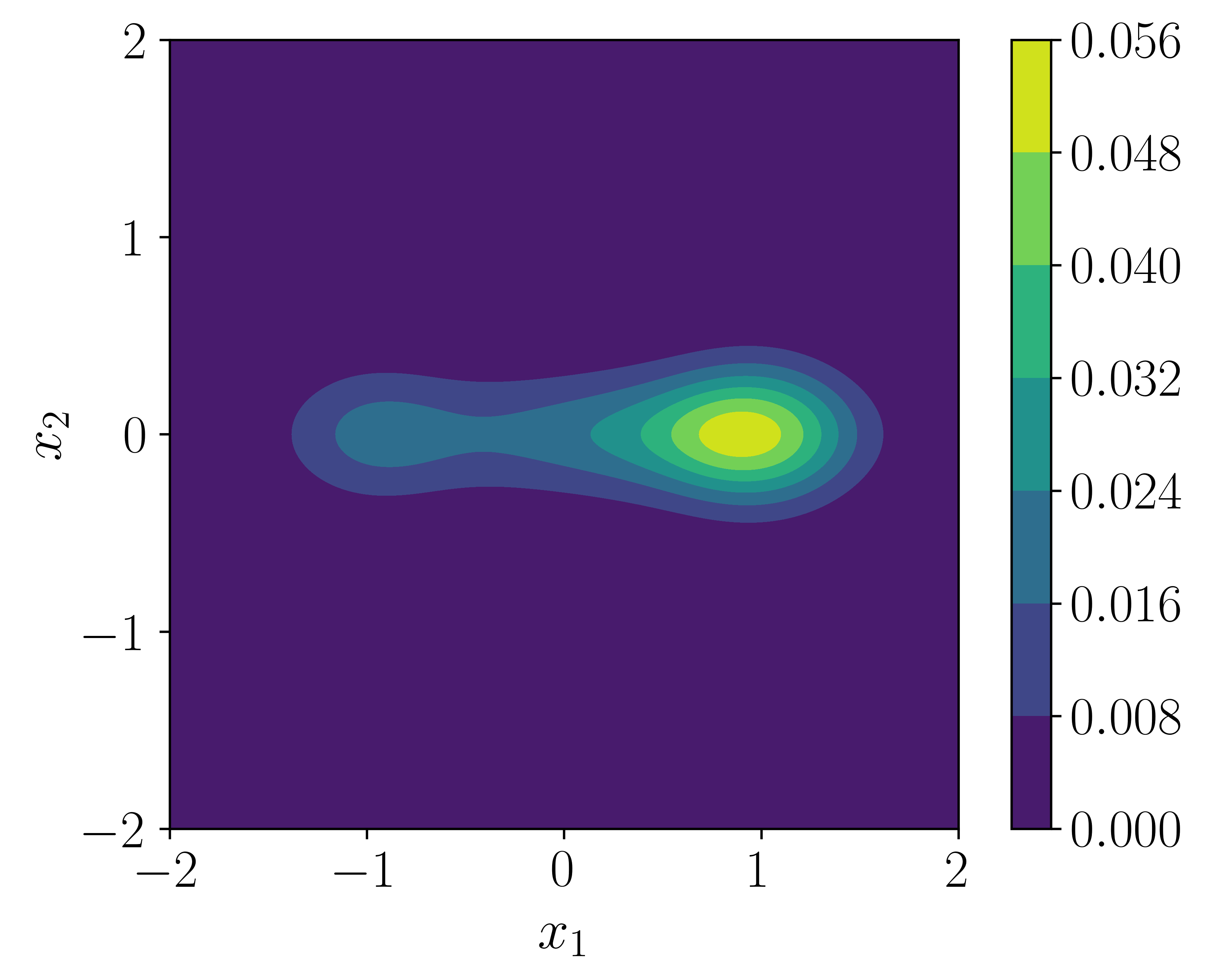}
\caption{$c_2$ at $t=0.1$}
\end{subfigure}
\hfill
\begin{subfigure}{0.48 \textwidth}
\centering
\includegraphics[width = \textwidth]{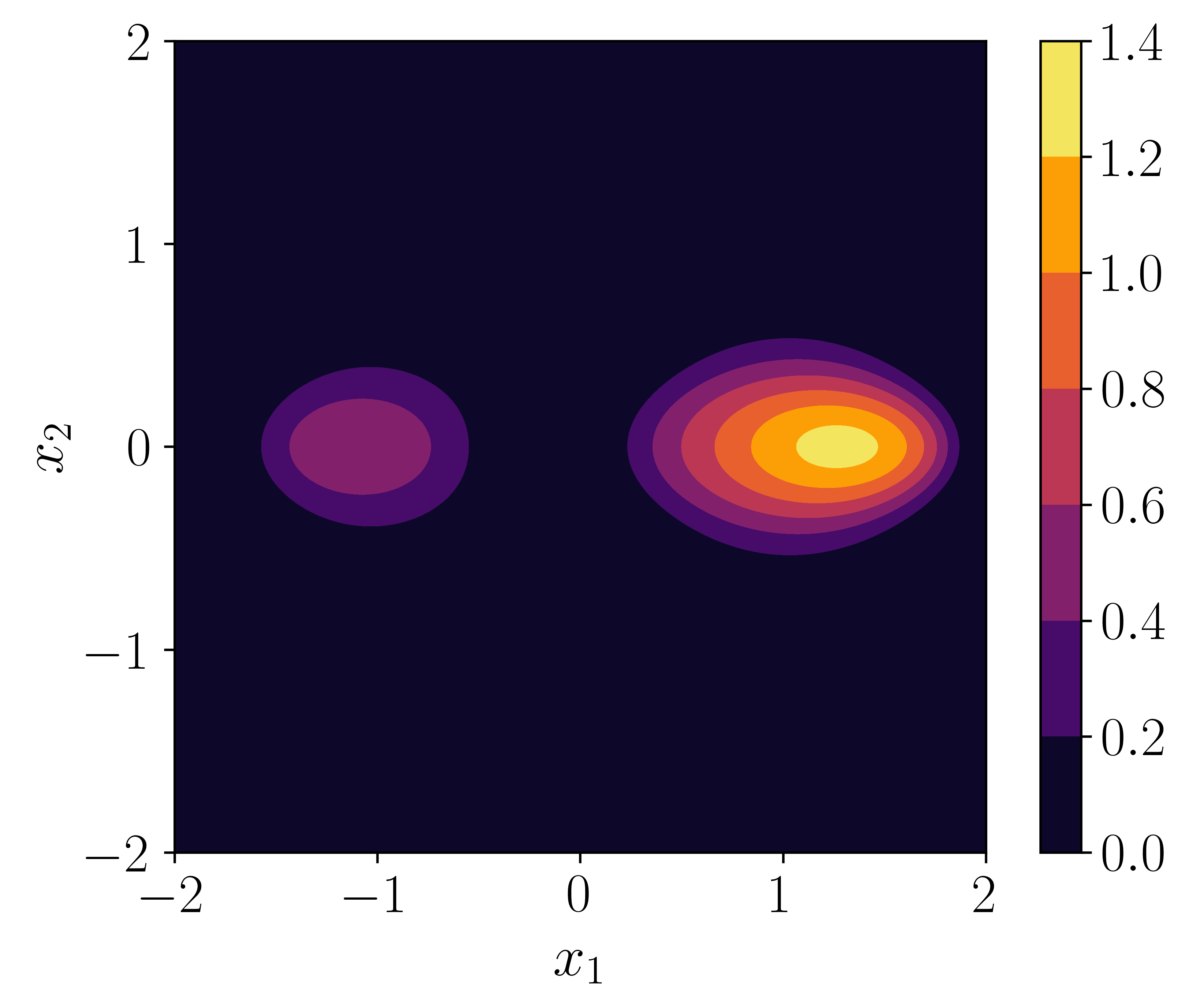}
\caption{$\rho_2$ at $t=1.0$}
\end{subfigure}
\hfill
\begin{subfigure}{0.48 \textwidth}
\centering
\includegraphics[width = \textwidth]{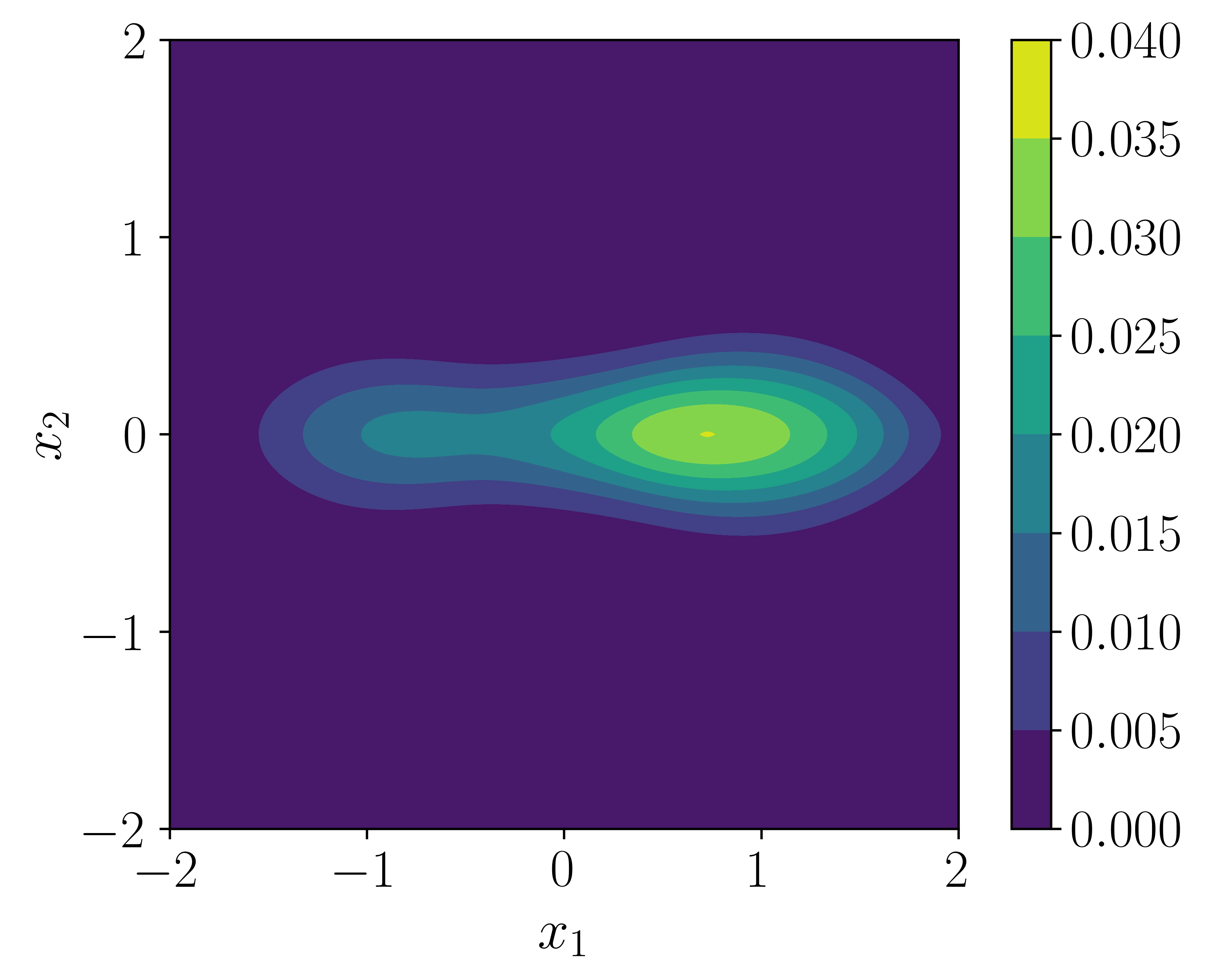}
\caption{$c_2$ at $t=1.0$}
\end{subfigure}
\hfill
\begin{subfigure}{0.48 \textwidth}
\centering
\includegraphics[width = \textwidth]{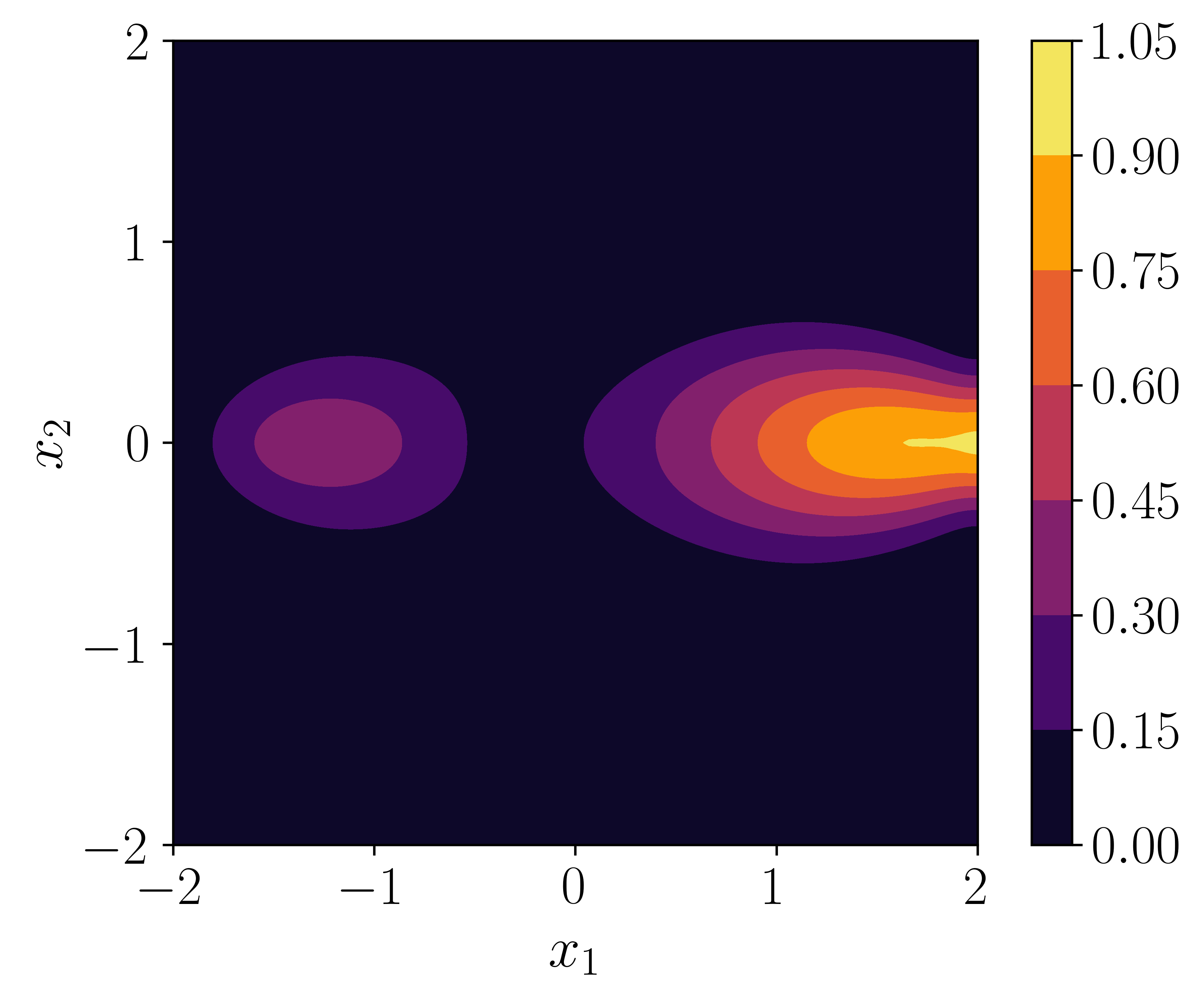}
\caption{$\rho_2$ at $t=1.8$}
\end{subfigure}
\hfill
\begin{subfigure}{0.48 \textwidth}
\centering
\includegraphics[width = \textwidth]{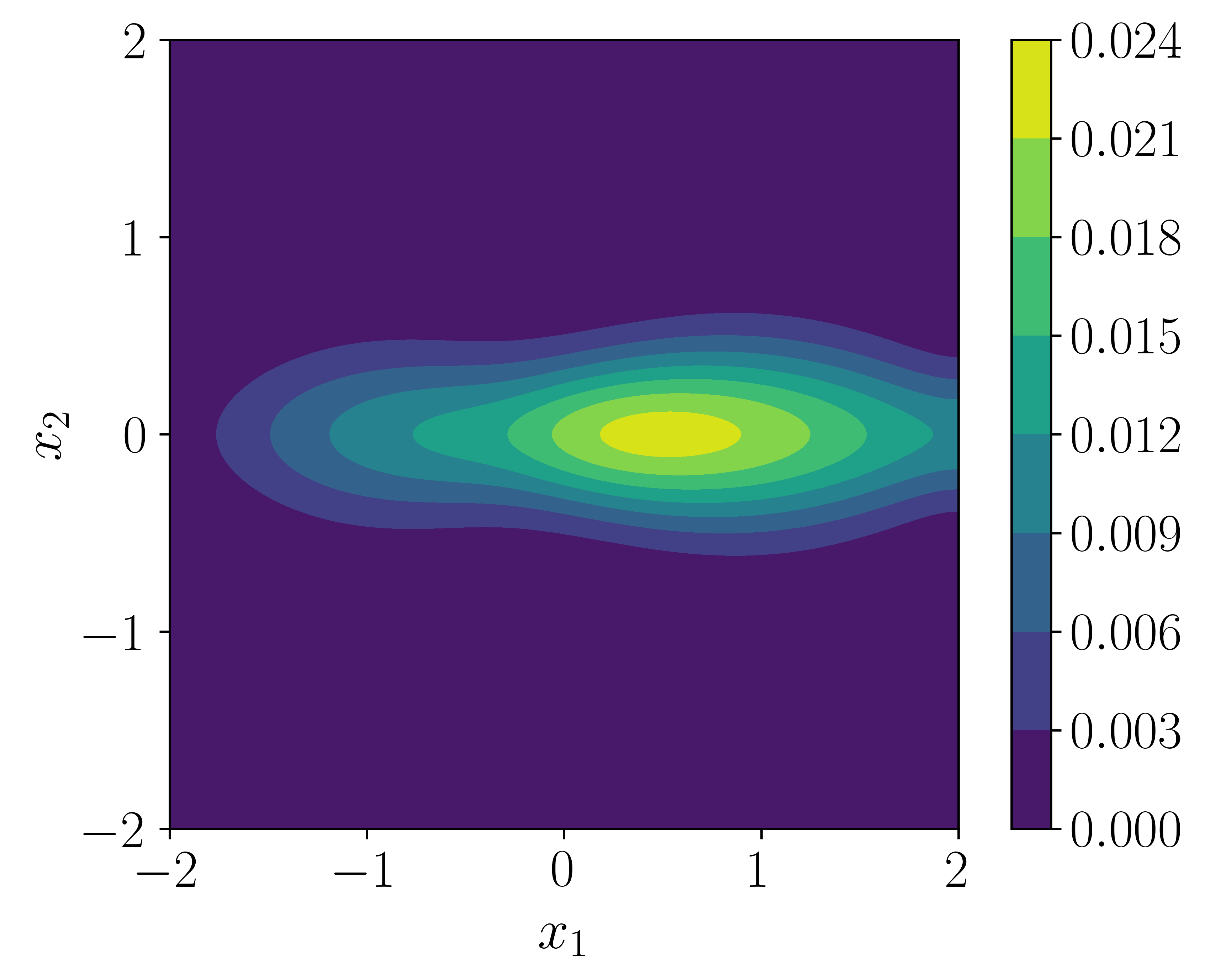}
\caption{$c_2$ at $t=1.8$}
\end{subfigure}
\caption{Numerical solutions to the generalised porous medium equation with fractional pressure \eqref{general MainFullProb}. The simulations were performed on $\Omega =(-2,2)^2$ for the fractional order $s=0.75$ with $h = |\Omega|/(2^8 \cdot 2^8)$, and time step $\Delta t = 0.05$. The matrix $A(x)$ was given by \ref{A10|01} and the function $Q(x)$ by \ref{Q=|x|^2}. The initial datum was \ref{eq: initial data comparison principle}. The parabolic regularisation coefficient was $\mu = 0.01$. }
\label{fig: experiment 1 heat maps}
\end{figure}

\begin{figure} 
\begin{subfigure}{0.48 \textwidth}
\centering
\includegraphics[width = \textwidth]{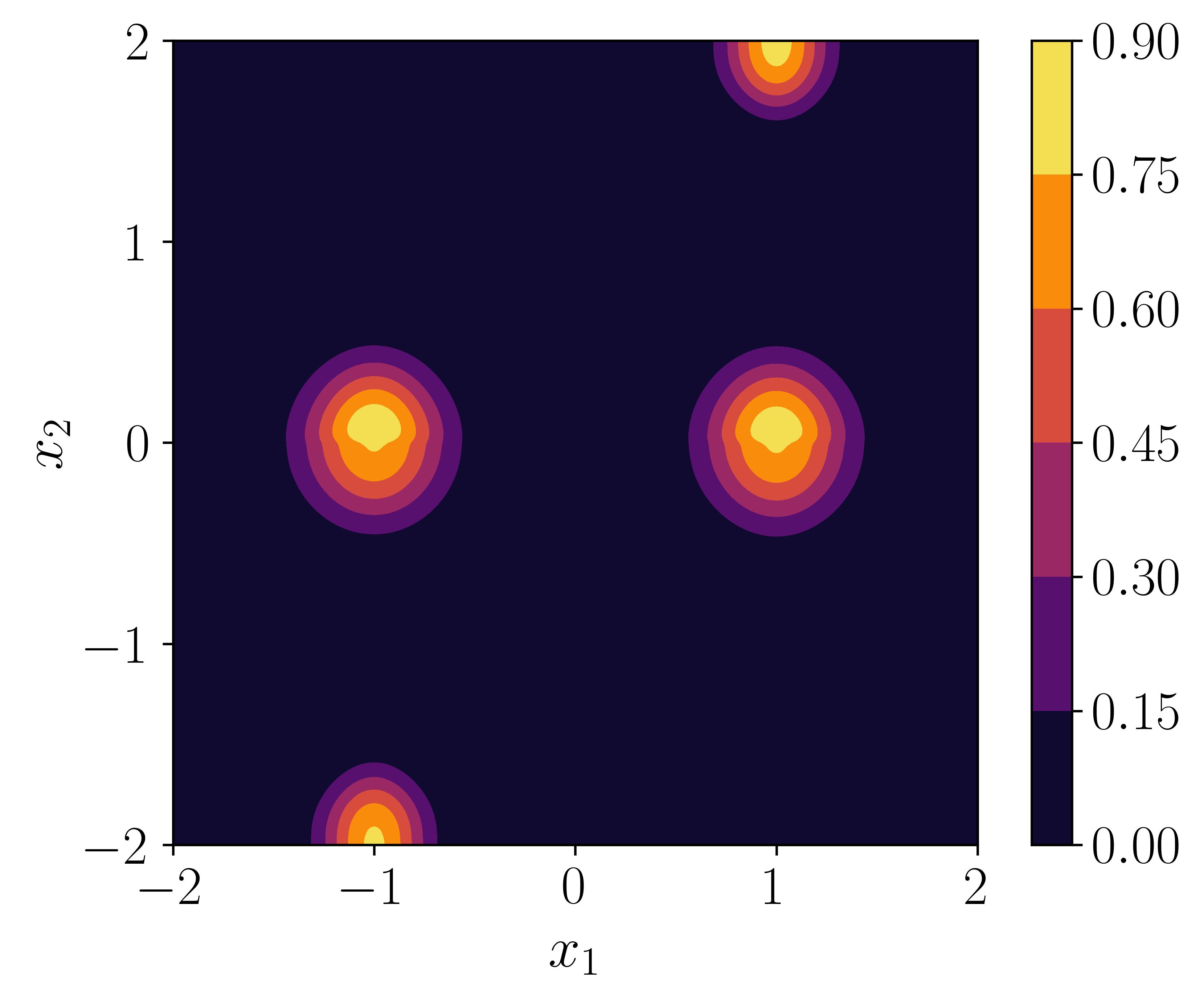}
\caption{$\rho$ at $t=0.1$}
\end{subfigure}
\hfill
\begin{subfigure}{0.48 \textwidth}
\centering
\includegraphics[width = \textwidth]{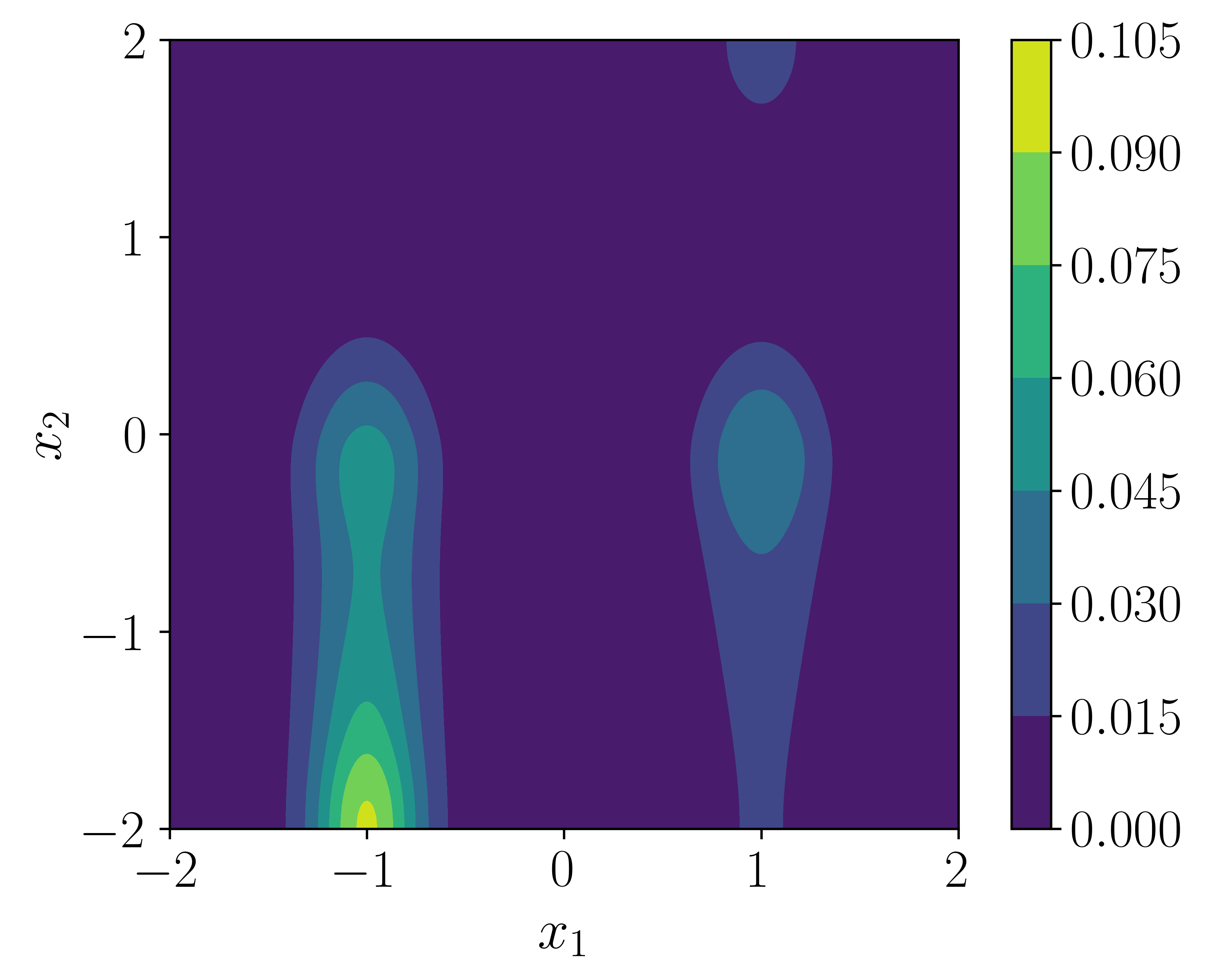}
\caption{$c$ at $t=0.1$}
\end{subfigure}
\hfill
\begin{subfigure}{0.48 \textwidth}
\centering
\includegraphics[width = \textwidth]{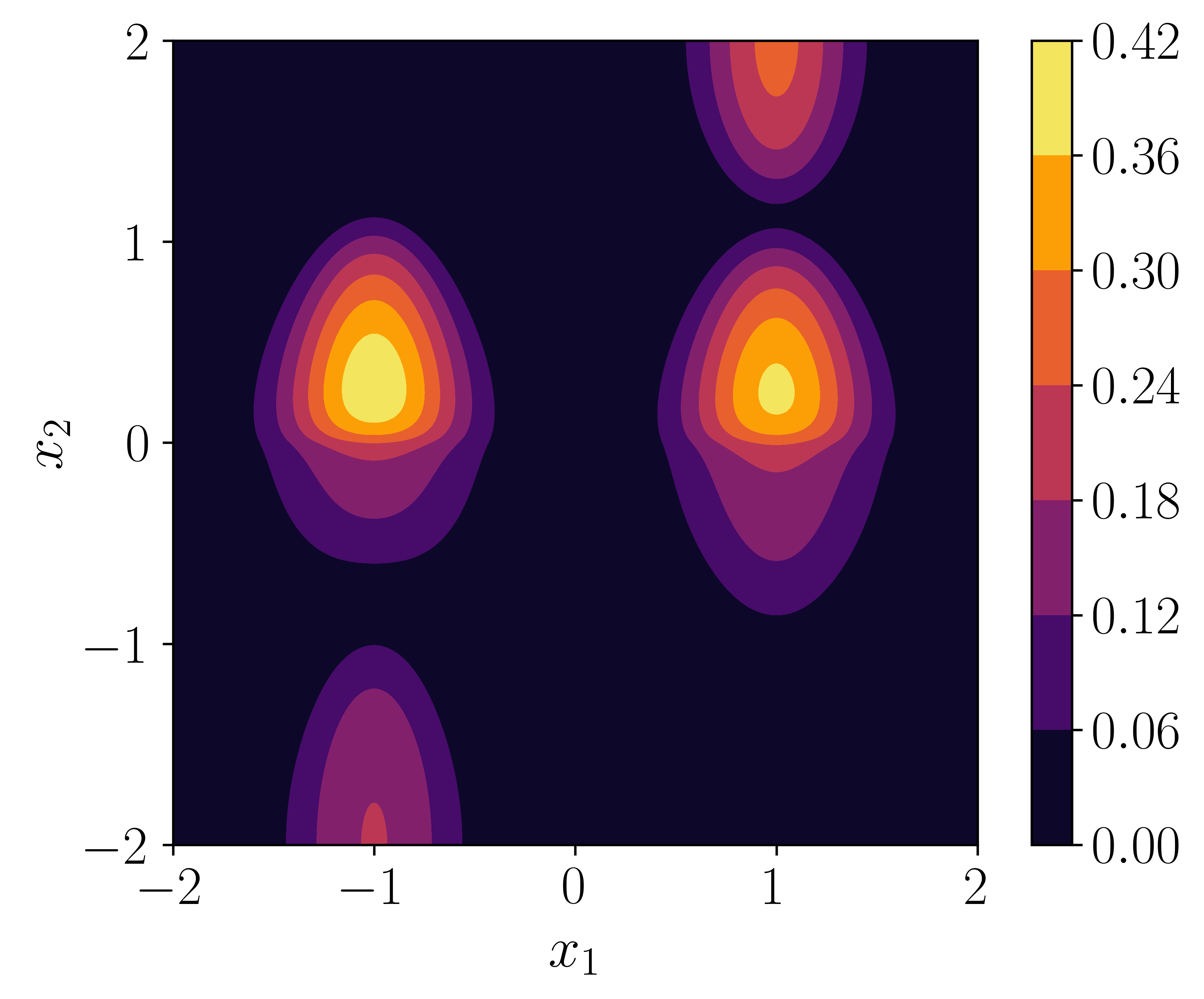}
\caption{$\rho$ at $t=1.5$}
\end{subfigure}
\hfill
\begin{subfigure}{0.48 \textwidth}
\centering
\includegraphics[width = \textwidth]{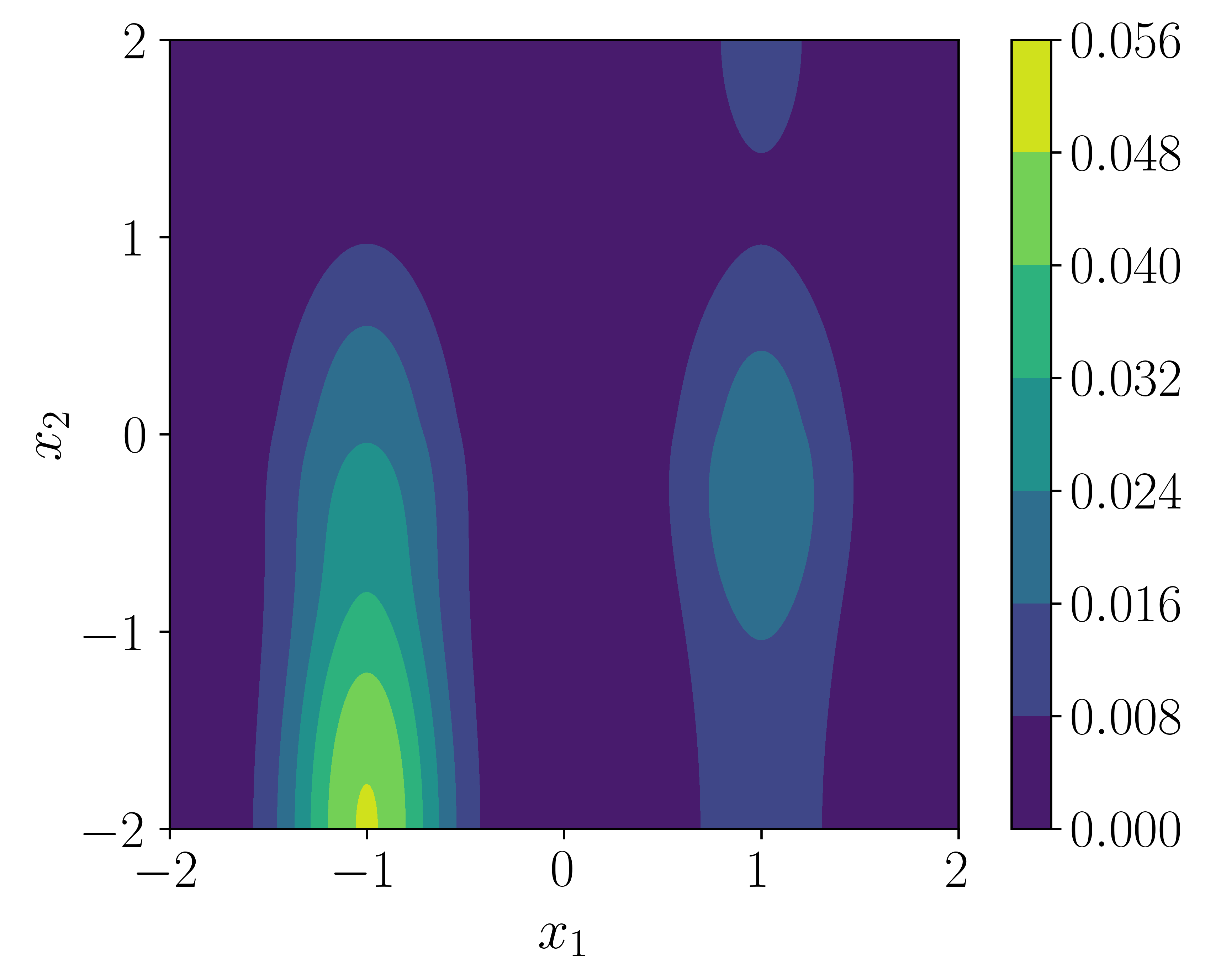}
\caption{$c$ at $t=1.5$}
\end{subfigure}
\hfill
\begin{subfigure}{0.48 \textwidth}
\centering
\includegraphics[width = \textwidth]{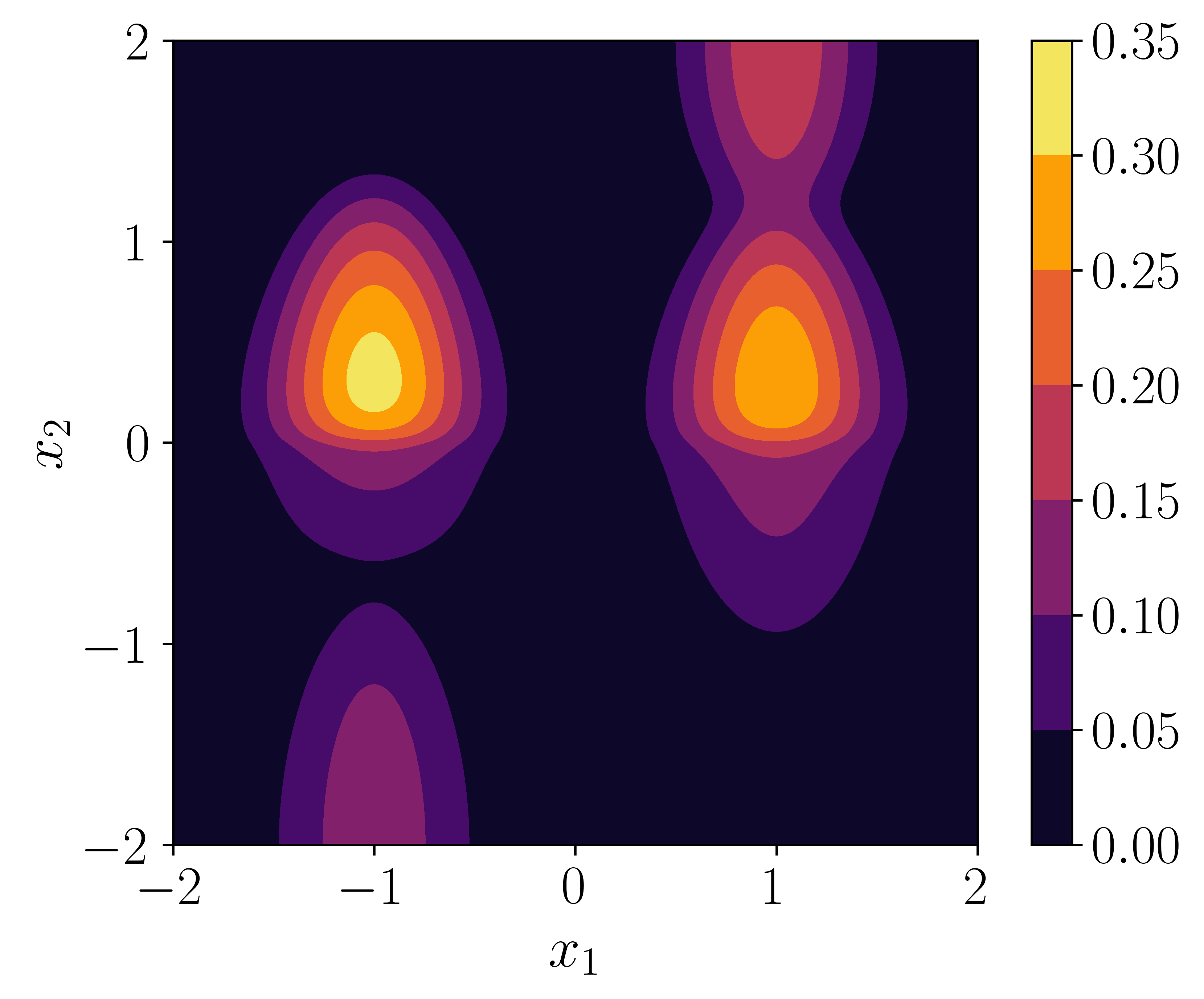}
\caption{$\rho$ at $t=2.5$}
\end{subfigure}
\hfill
\begin{subfigure}{0.48 \textwidth}
\centering
\includegraphics[width = \textwidth]{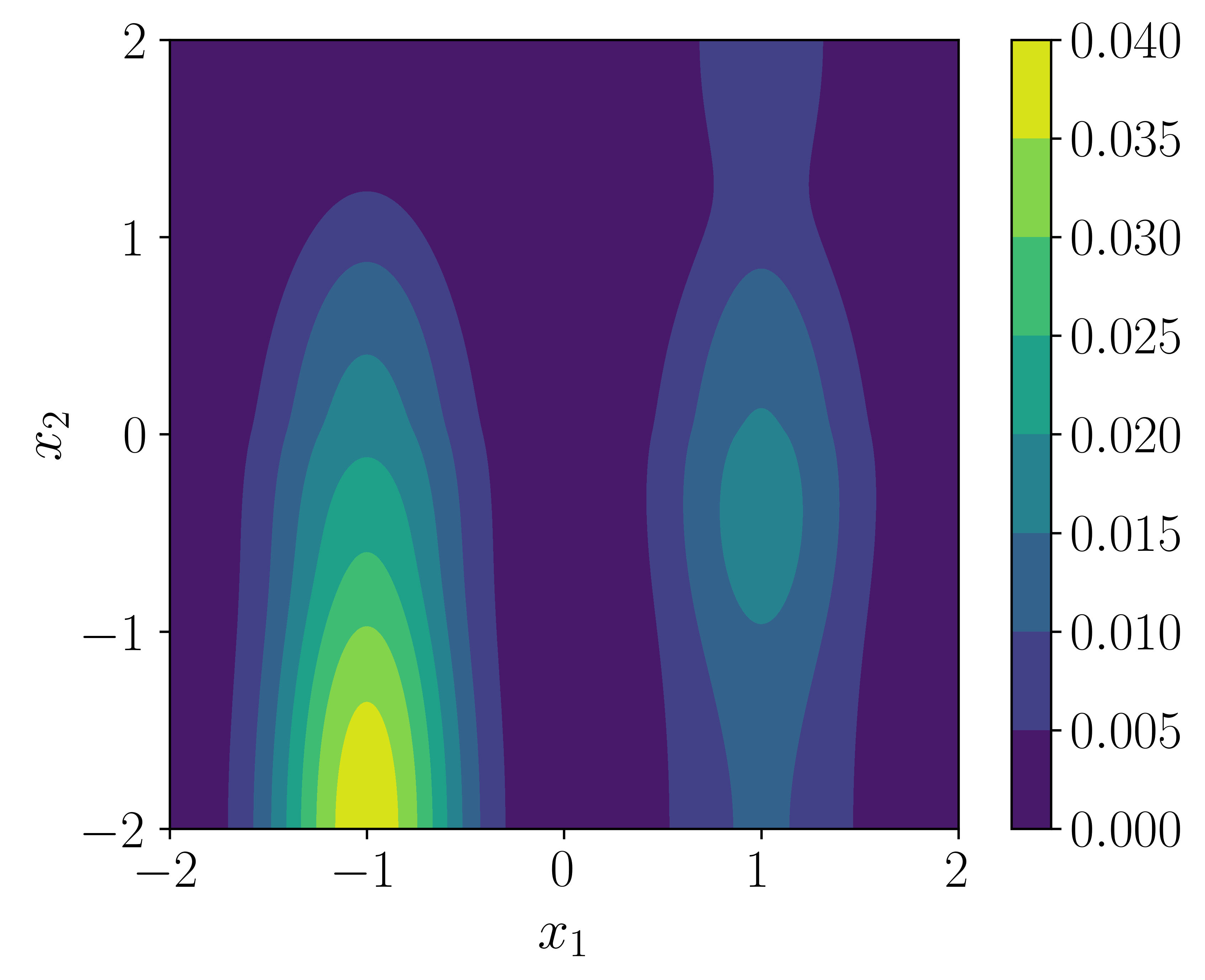}
\caption{$c$ at $t=2.5$}
\end{subfigure}
\caption{Numerical solutions to the generalised porous medium equation with fractional pressure \eqref{general MainFullProb}. The simulations were performed on $\Omega =(-2,2)^2$ for the fractional order $s=0.67$ with $h = |\Omega|/(2^8 \cdot 2^8)$, and time step $\Delta t = 0.05$.  The matrix $A(x)$ was given by \ref{A01|10} and the function $Q(x)$ by \ref{Q=step}. The initial data was \ref{eq: initial datum experiment explore}. The parabolic regularisation coefficient was $\mu = 0.01$. }
\label{fig: experiment 2 heat maps}
\end{figure}

We now investigate the long-time behaviour of the model. This is particularly interesting because in \cite[Theorem 4.8]{Fronzoni2025} the authors proved exponential decay in $L^1(\Omega)$ norm to the uniform steady state for the case $\LL =-\Delta$. In our case, we are unable to use the same technique: a closer inspection to the proof of \cite[Theorem 4.8]{Fronzoni2025} shows that the presence on the right-hand side of \eqref{ExtUniformBoundSpatialDer} of an additional term deriving from the potential $Q(x)$ does not allow us to use the same argument to infer exponential decay of the term $\int_\Omega G(\widehat{\rho}(x,t)) \dx$ and consequently exponential decay of the solution to the uniform steady state via the Csiszár–Kullback inequality. As a matter of fact, we observe that if the function $Q(x)$ is not identically equal to zero, the uniform steady state is no longer a (trivial) solution for the model \eqref{general MainFullProb}. We can observe this fact in Figure \ref{fig: non trivial steady state Q=|x|^2}: taking $Q(x)=|x|^2$ we see that the numerical solution converges to a non-trivial steady state. Conversely, in Figure \ref{fig: exponential decay Q=0} we observe what happens when $Q =0$  even in the presence of anisotropy $A(x) \neq I$: we computed simulations for differential initial data; measuring the $L^1(\Omega)$ norm distance between the numerical solution and the uniform distribution, we see that all solutions converge to the constant steady state. 

Although we are not able to prove exponential decay of the solution to a steady state we can comment on our computational results. We first observe in Figure \ref{fig: exponential decay Q=0} (the case \ref{(K1)}, $Q(x)=0$) that, for all the initial data that we tested, for large times we see an exponential decay towards the steady state. Moreover, for the two initial data that are the furthest from the equilibrium (a Gaussian and a blob centred at a corner of the square domain $\Omega$) the slope of the decay, quite interestingly, matches the one of \cite[Theorem 4.8]{Fronzoni2025}: $-2t/C_\Omega$ for $C_\Omega = 1/\lambda_1$ with $\lambda_1$ being the first positive eigenvalue of the Neumann Laplacian. 
Even more interestingly, in the case \ref{(K0)} (nonvanishing $Q(x)$) we can make similar observations. The experiments in this case are reported in Figure \ref{fig: exponential decay Q=|x|^2} and, since the steady state does not have an explicit formula, we compute the $L^1(\Omega)$ norm of the distance between successive time steps, i.e. $\| \widehat{\rho}(\cdot, t_n) - \widehat{\rho}(. t_{n-1}) \|_{L^1(\Omega)}$. We  observe exponential decay and that the presumed sharp rate of  $-2t/C_\Omega$ is also attained for the Gaussian and the blob centred at a corner of the domain. We believe that this computational evidence enforces our intuition that exponential decay to the steady state holds also in our case and we are confident that this observation could open directions for proving this fact.    

\begin{figure} 

\begin{subfigure}{0.5 \textwidth}
\centering
\includegraphics[width = 0.55 \textwidth]{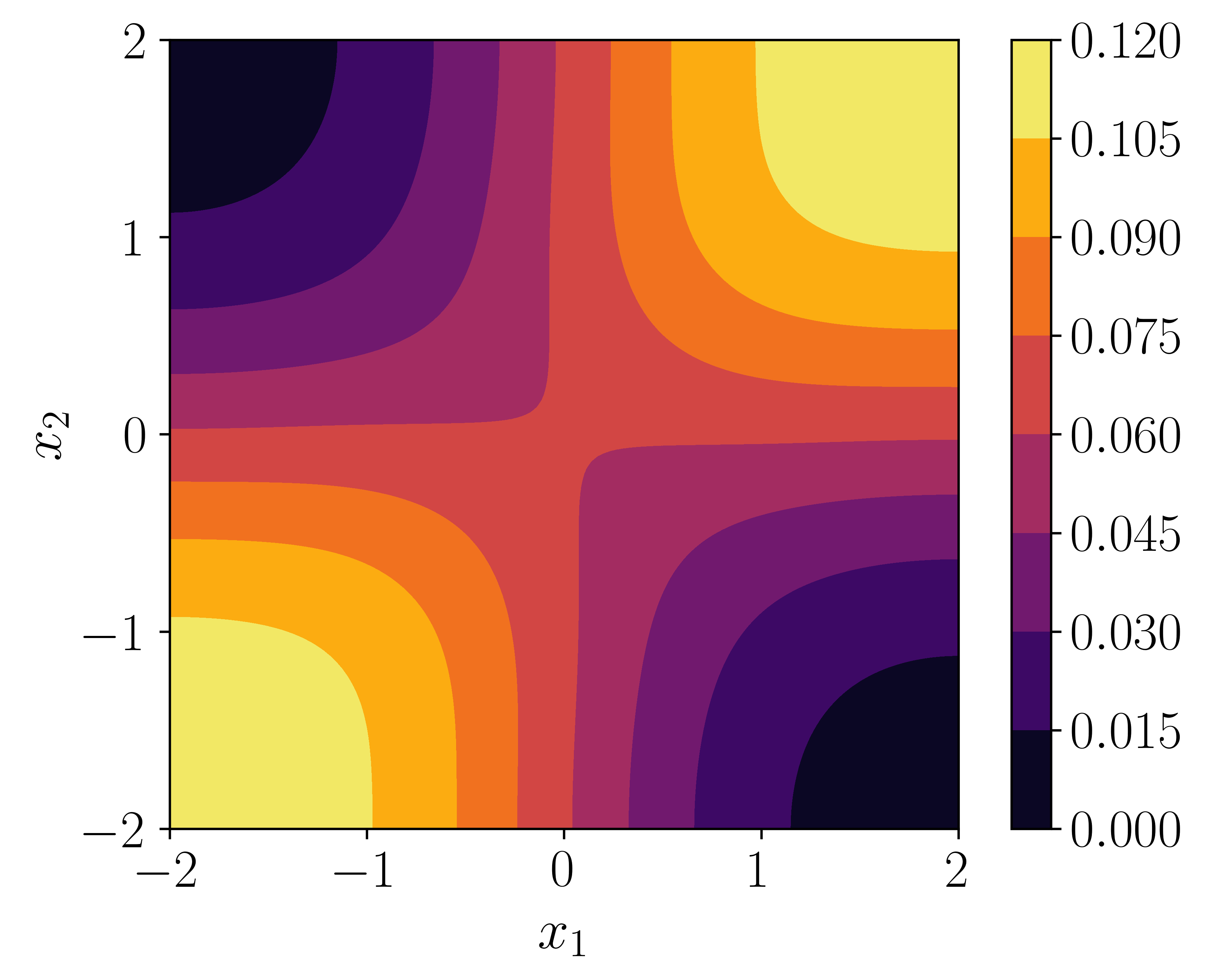}
\caption{$\rho$ at $t=0.5$}
\end{subfigure}
\hfill
\begin{subfigure}{0.5 \textwidth}
\centering
\includegraphics[width = 0.55 \textwidth]{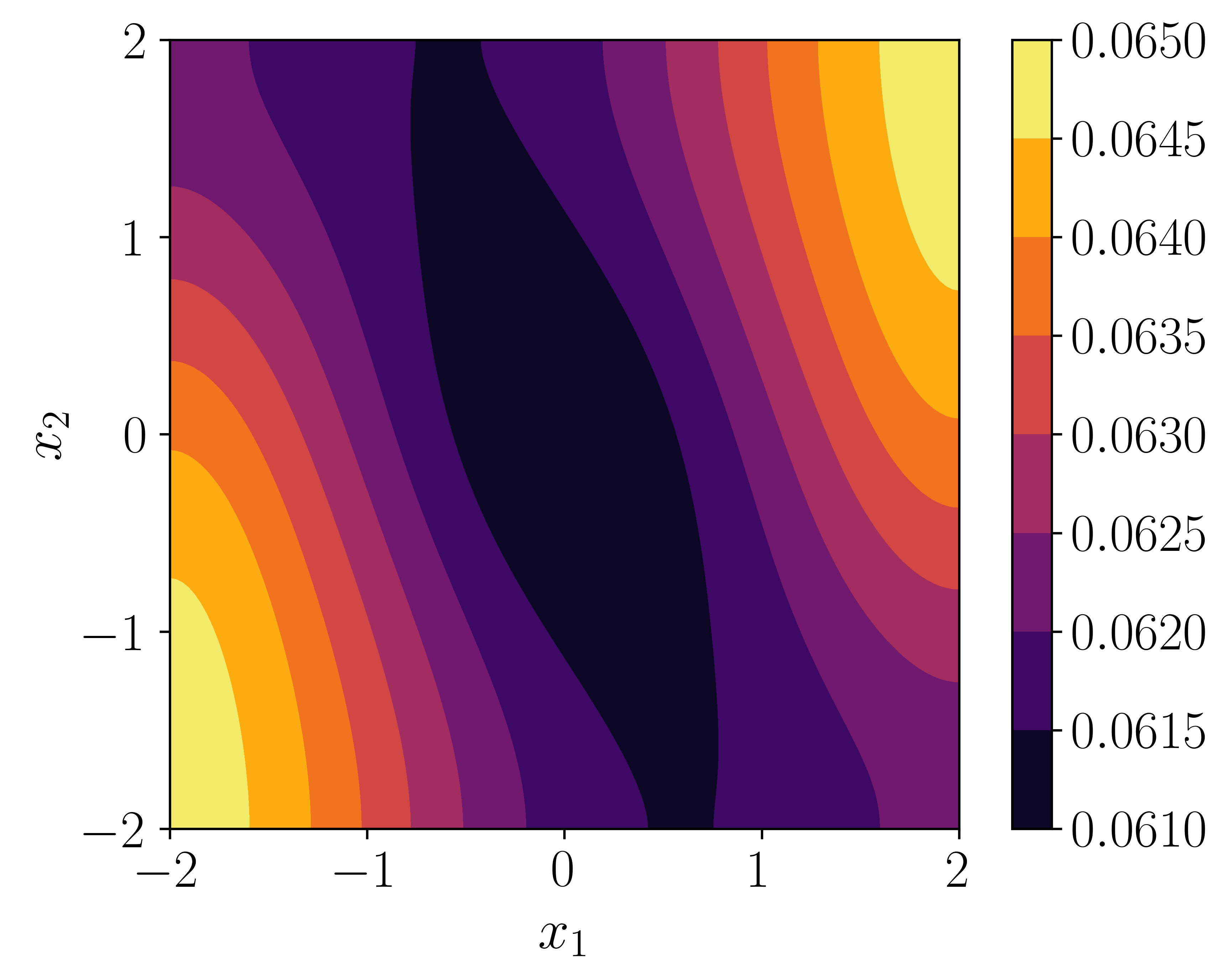}
\caption{$\rho$ at $t=3.50$}
\end{subfigure}
\hfill
\begin{subfigure}{0.5 \textwidth}
\centering
\includegraphics[width = 0.55 \textwidth]{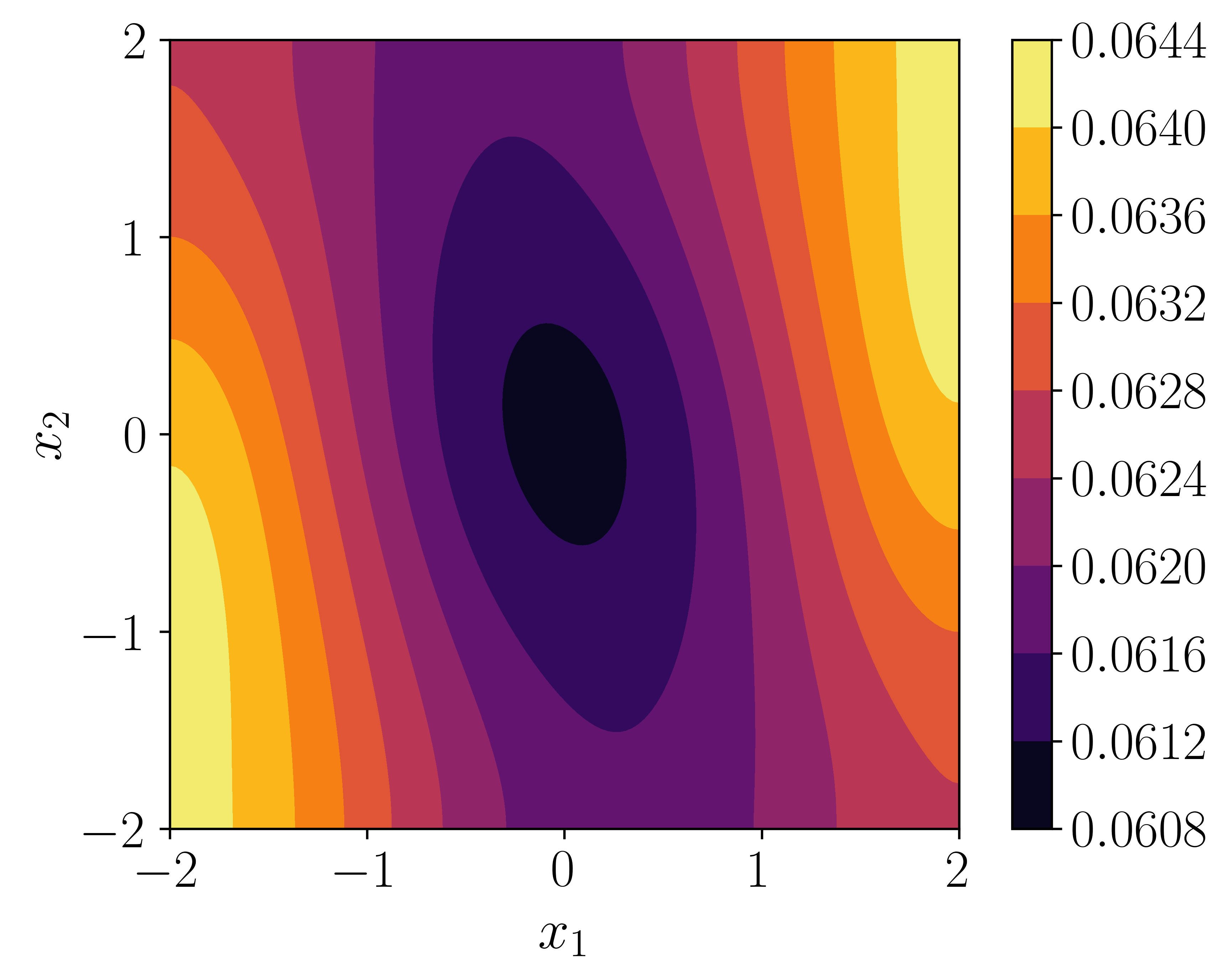}
\caption{$\rho$ at $t=4.0$}
\end{subfigure}
\hfill
\begin{subfigure}{0.5 \textwidth}
\centering
\includegraphics[width = 0.55 \textwidth]{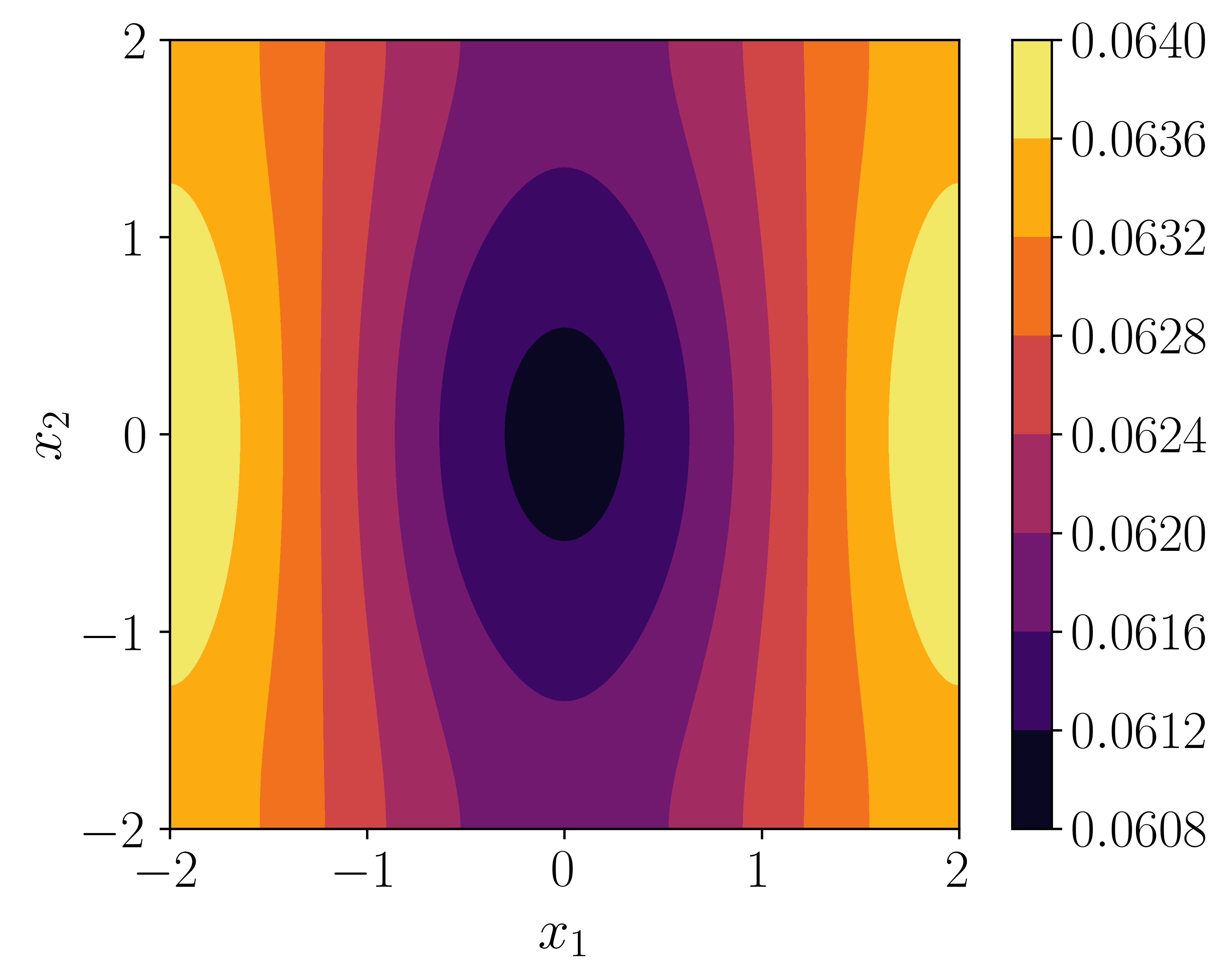}
\caption{$\rho$ at $t=10.0$}
\end{subfigure}
\caption{Numerical solutions to the generalised porous medium equation with fractional pressure \eqref{general MainFullProb}. The simulations were performed on $\Omega =(-2,2)^2$ for the fractional order $s=0.75$ with $h = |\Omega|/(2^7 \cdot 2^7)$, and time step $\Delta t = 0.01$. The matrix $A(x)$ was given by \ref{A10|01} and the function $Q(x)=|x|^2$. The initial datum was two Gaussians $\rho=\frac{1}{\sqrt{2\pi} \sigma} \mathrm{e}^{-\frac{|x-P_1|^2}{\sigma^2}} + \frac{1}{\sqrt{2\pi} \sigma} \mathrm{e}^{-\frac{|x-P_2|^2}{\sigma^2}}$, with $P_1=(1,1)$ and $P_2=(-1,-1)$, and $\sigma=0.4$ The parabolic regularisation coefficient was $\mu = 1$. }
\label{fig: non trivial steady state Q=|x|^2}
\end{figure}

\begin{figure}
\centering
\includegraphics[width = 0.64\textwidth]{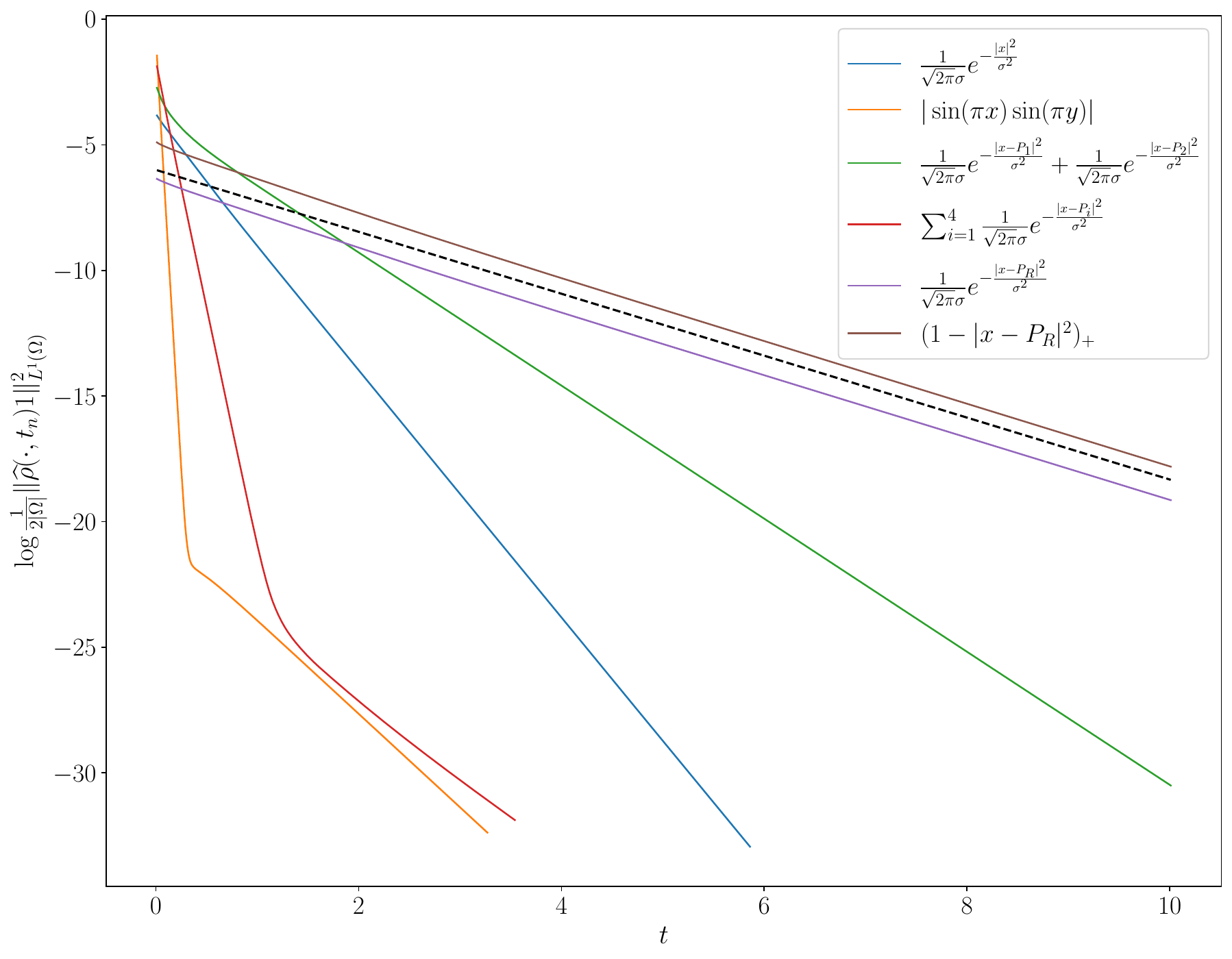}
\caption{Numerical solutions to the generalised porous medium equation with fractional pressure \eqref{general MainFullProb}. The matrix $A(x)$ was given by \ref{A10|01} and the function $Q(x)=0$. The simulations were performed on $\Omega =(-2,2)^2$ for the fractional order $s=0.75$ with $h = |\Omega|/(2^7 \cdot 2^7)$, and time step $\Delta t = 0.01$. The parabolic regularisation coefficient was $\mu = 1$. Parameters for the initial data were $\sigma=0.4$, $P_1=(1,1)$, $P_1=(-1,-1)$, $P_3=(1,-1)$, $P_4=(-1,1)$, $P_R=(2,2)$. The dashed black reference line has slope $-2t/C_\Omega$ for $C_\Omega = 1/\lambda_1$, $\lambda_1$ the first positive eigenvalue of the Neumann Laplacian.}
\label{fig: exponential decay Q=0}
\end{figure}

\begin{figure} 
\centering
\includegraphics[width = 0.64\textwidth]{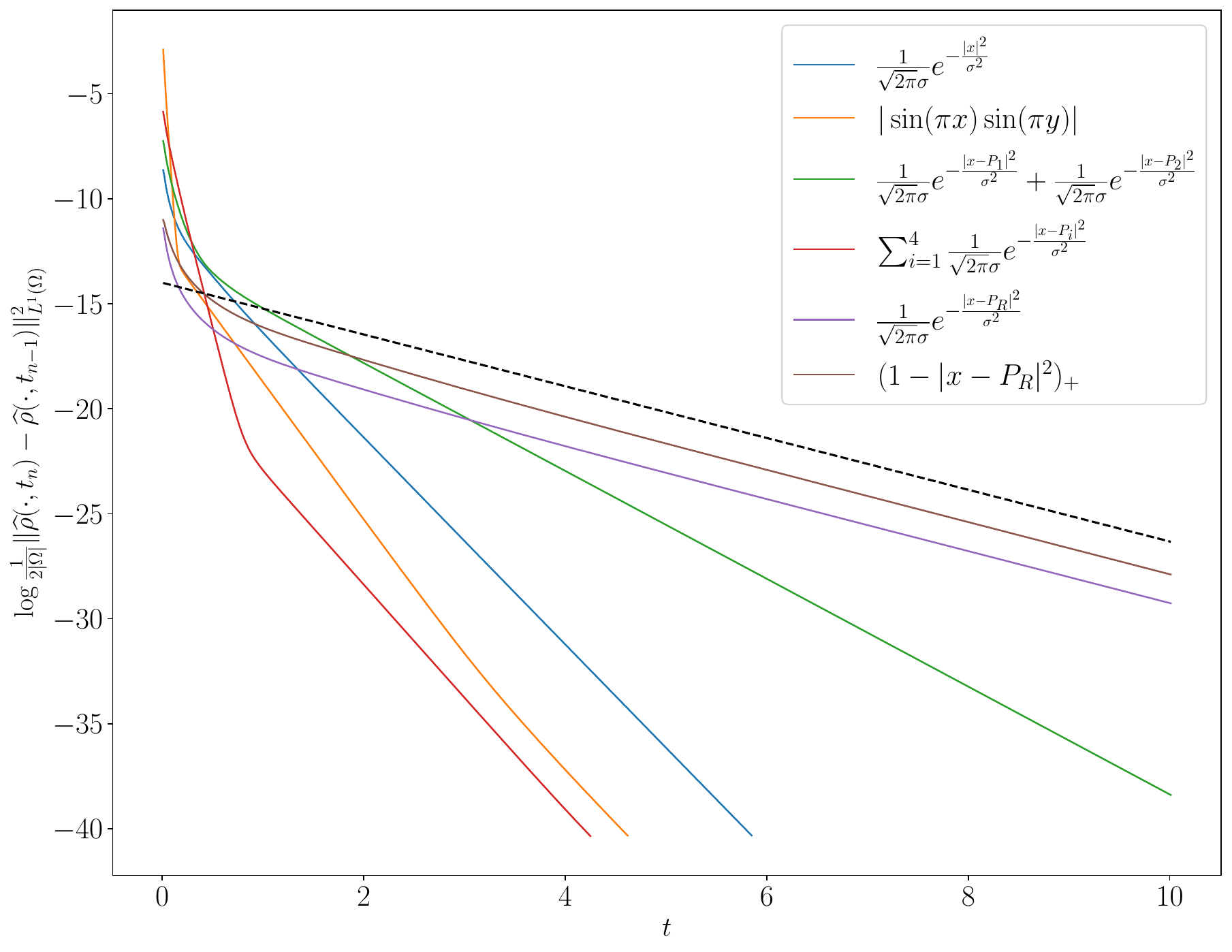}
\caption{Numerical solutions to the generalised porous medium equation with fractional pressure \eqref{general MainFullProb}. The matrix $A(x)$ was given by \ref{A01|10} and the function $Q(x)=|x|^2$. The simulations were performed on $\Omega =(-2,2)^2$ for the fractional order $s=0.75$ with $h = |\Omega|/(2^7 \cdot 2^7)$, and time step $\Delta t = 0.01$. The parabolic regularisation coefficient was $\mu = 1$. Parameters for the initial data were $\sigma=0.4$, $P_1=(1,1)$, $P_1=(-1,-1)$, $P_3=(1,-1)$, $P_4=(-1,1)$, $P_R=(2,2)$. The dashed black reference line has slope $-2t/C_\Omega$ for $C_\Omega = 1/\lambda_1$, $\lambda_1$ the first positive eigenvalue of the Neumann Laplacian.}
\label{fig: exponential decay Q=|x|^2}
\end{figure}

\FloatBarrier

\bibliographystyle{abbrv}
\bibliography{./fractional-porous-FE.bib}

@article {chen2022analysis,
    AUTHOR = {Chen, Li and Holzinger, Alexandra and J\"{u}ngel, Ansgar and
              Zamponi, Nicola},
     TITLE = {Analysis and mean-field derivation of a porous-medium equation
              with fractional diffusion},
   JOURNAL = {Comm. Partial Differential Equations},
  FJOURNAL = {Communications in Partial Differential Equations},
    VOLUME = {47},
      YEAR = {2022},
    NUMBER = {11},
     PAGES = {2217--2269},
      ISSN = {0360-5302},
   MRCLASS = {60H10 (35K65 35R11 60H30)},
  MRNUMBER = {4526892},
       DOI = {10.1080/03605302.2022.2118608},
       URL = {https://doi.org/10.1080/03605302.2022.2118608},
}

@article {bonito2015numerical,
    AUTHOR = {Bonito, Andrea and Pasciak, Joseph E.},
     TITLE = {Numerical approximation of fractional powers of elliptic
              operators},
   JOURNAL = {Math. Comp.},
  FJOURNAL = {Mathematics of Computation},
    VOLUME = {84},
      YEAR = {2015},
    NUMBER = {295},
     PAGES = {2083--2110},
      ISSN = {0025-5718},
   MRCLASS = {65N30 (65R20)},
  MRNUMBER = {3356020},
MRREVIEWER = {Igor Bock},
       DOI = {10.1090/S0025-5718-2015-02937-8},
       URL = {https://doi.org/10.1090/S0025-5718-2015-02937-8},
}

@article {barrett2012finite,
    AUTHOR = {Barrett, John W. and S\"{u}li, Endre},
     TITLE = {Finite element approximation of finitely extensible nonlinear
              elastic dumbbell models for dilute polymers},
   JOURNAL = {ESAIM Math. Model. Numer. Anal.},
  FJOURNAL = {ESAIM. Mathematical Modelling and Numerical Analysis},
    VOLUME = {46},
      YEAR = {2012},
    NUMBER = {4},
     PAGES = {949--978},
      ISSN = {2822-7840},
   MRCLASS = {35Q35 (65M60 76A05 82D60)},
  MRNUMBER = {2891476},
       DOI = {10.1051/m2an/2011062},
       URL = {https://doi.org/10.1051/m2an/2011062},
}

@article {barrett2011finite,
    AUTHOR = {Barrett, John W. and S\"{u}li, Endre},
     TITLE = {Finite element approximation of kinetic dilute polymer models
              with microscopic cut-off},
   JOURNAL = {ESAIM Math. Model. Numer. Anal.},
  FJOURNAL = {ESAIM. Mathematical Modelling and Numerical Analysis},
    VOLUME = {45},
      YEAR = {2011},
    NUMBER = {1},
     PAGES = {39--89},
      ISSN = {2822-7840},
   MRCLASS = {82D60 (35A35 35D30 35Q35 65M12 65M60 76A05 82C31)},
  MRNUMBER = {2781131},
MRREVIEWER = {Reinhard Farwig},
       DOI = {10.1051/m2an/2010030},
       URL = {https://doi.org/10.1051/m2an/2010030},
}

@article {caffarelli2016fractional,
    AUTHOR = {Caffarelli, Luis A. and Stinga, Pablo Ra\'{u}l},
     TITLE = {Fractional elliptic equations, {C}accioppoli estimates and
              regularity},
   JOURNAL = {Ann. Inst. H. Poincar\'{e} C Anal. Non Lin\'{e}aire},
  FJOURNAL = {Annales de l'Institut Henri Poincar\'{e} C. Analyse Non Lin\'{e}aire},
    VOLUME = {33},
      YEAR = {2016},
    NUMBER = {3},
     PAGES = {767--807},
      ISSN = {0294-1449},
   MRCLASS = {35R11 (35B45 35B65 46E35)},
  MRNUMBER = {3489634},
MRREVIEWER = {Mark Allen},
       DOI = {10.1016/j.anihpc.2015.01.004},
       URL = {https://doi.org/10.1016/j.anihpc.2015.01.004},
}

@article {bonito2017numerical,
    AUTHOR = {Bonito, Andrea and Pasciak, Joseph E.},
     TITLE = {Numerical approximation of fractional powers of regularly
              accretive operators},
   JOURNAL = {IMA J. Numer. Anal.},
  FJOURNAL = {IMA Journal of Numerical Analysis},
    VOLUME = {37},
      YEAR = {2017},
    NUMBER = {3},
     PAGES = {1245--1273},
      ISSN = {0272-4979},
   MRCLASS = {65J10 (26A33 65R10)},
  MRNUMBER = {3671494},
MRREVIEWER = {C. Ilioi},
       DOI = {10.1093/imanum/drw042},
       URL = {https://doi.org/10.1093/imanum/drw042},
}

@article {dubinskii1965weak,
    AUTHOR = {Dubinski\u{\i}, Ju. A.},
     TITLE = {Weak convergence for nonlinear elliptic and parabolic
              equations},
   JOURNAL = {Mat. Sb. (N.S.)},
  FJOURNAL = {Matematicheski\u{\i} Sbornik. Novaya Seriya},
    VOLUME = {67(109)},
      YEAR = {1965},
     PAGES = {609--642},
      ISSN = {0368-8666},
   MRCLASS = {35.63 (35.43)},
  MRNUMBER = {190546},
MRREVIEWER = {P. C. Fife},
}

@article {barrett2011existence,
    AUTHOR = {Barrett, John W. and S\"{u}li, Endre},
     TITLE = {Existence and equilibration of global weak solutions to
              kinetic models for dilute polymers {I}: {F}initely extensible
              nonlinear bead-spring chains},
   JOURNAL = {Math. Models Methods Appl. Sci.},
  FJOURNAL = {Mathematical Models and Methods in Applied Sciences},
    VOLUME = {21},
      YEAR = {2011},
    NUMBER = {6},
     PAGES = {1211--1289},
      ISSN = {0218-2025},
   MRCLASS = {82C31 (35Q30 35Q84 74A25 82C40 82D60 92C40)},
  MRNUMBER = {2819196},
MRREVIEWER = {Beno\^{i}t P. Desjardins},
       DOI = {10.1142/S0218202511005313},
       URL = {https://doi.org/10.1142/S0218202511005313},
}

@article {barrett2012dubinskii,
    AUTHOR = {Barrett, John W. and S\"{u}li, Endre},
     TITLE = {Reflections on {D}ubinski\u{\i}'s nonlinear compact embedding
              theorem},
   JOURNAL = {Publ. Inst. Math. (Beograd) (N.S.)},
  FJOURNAL = {Institut Math\'{e}matique. Publications. Nouvelle S\'{e}rie},
    VOLUME = {91(105)},
      YEAR = {2012},
     PAGES = {95--110},
      ISSN = {0350-1302},
   MRCLASS = {46B50 (35A23 46E40)},
  MRNUMBER = {2963813},
MRREVIEWER = {Ale\v{s} Nekvinda},
       DOI = {10.2298/PIM1205095B},
       URL = {https://doi.org/10.2298/PIM1205095B},
}

@article {choi2021classical,
    AUTHOR = {Choi, Young-Pil and Jeong, In-Jee},
     TITLE = {Classical solutions for fractional porous medium flow},
   JOURNAL = {Nonlinear Anal.},
  FJOURNAL = {Nonlinear Analysis. Theory, Methods \& Applications. An
              International Multidisciplinary Journal},
    VOLUME = {210},
      YEAR = {2021},
     PAGES = {Paper No. 112393, 13},
      ISSN = {0362-546X},
   MRCLASS = {35Q35},
  MRNUMBER = {4253949},
       DOI = {10.1016/j.na.2021.112393},
       URL = {https://doi.org/10.1016/j.na.2021.112393},
}

@article {carrillo2015exponential,
    AUTHOR = {Carrillo, J. A. and Huang, Y. and Santos, M. C. and V\'{a}zquez,
              J. L.},
     TITLE = {Exponential convergence towards stationary states for the 1{D}
              porous medium equation with fractional pressure},
   JOURNAL = {J. Differential Equations},
  FJOURNAL = {Journal of Differential Equations},
    VOLUME = {258},
      YEAR = {2015},
    NUMBER = {3},
     PAGES = {736--763},
      ISSN = {0022-0396},
   MRCLASS = {35K59 (26A33 35K65 35R11 74F10 76S05)},
  MRNUMBER = {3279352},
MRREVIEWER = {Kai Diethelm},
       DOI = {10.1016/j.jde.2014.10.003},
       URL = {https://doi.org/10.1016/j.jde.2014.10.003},
}

@article {caffarelli2010asymptotic,
    AUTHOR = {Caffarelli, Luis A. and V\'{a}zquez, Juan Luis},
     TITLE = {Asymptotic behaviour of a porous medium equation with
              fractional diffusion},
   JOURNAL = {Discrete Contin. Dyn. Syst.},
  FJOURNAL = {Discrete and Continuous Dynamical Systems. Series A},
    VOLUME = {29},
      YEAR = {2011},
    NUMBER = {4},
     PAGES = {1393--1404},
      ISSN = {1078-0947},
   MRCLASS = {35R11 (35B40 35J87 35K15 35K55 35K65 76S05)},
  MRNUMBER = {2773189},
MRREVIEWER = {Philippe Lauren\c{c}ot},
       DOI = {10.3934/dcds.2011.29.1393},
       URL = {https://doi.org/10.3934/dcds.2011.29.1393},
}

@article {caffarelli2010nonlinear,
    AUTHOR = {Caffarelli, Luis A. and Vazquez, Juan Luis},
     TITLE = {Nonlinear porous medium flow with fractional potential
              pressure},
   JOURNAL = {Arch. Ration. Mech. Anal.},
  FJOURNAL = {Archive for Rational Mechanics and Analysis},
    VOLUME = {202},
      YEAR = {2011},
    NUMBER = {2},
     PAGES = {537--565},
      ISSN = {0003-9527},
   MRCLASS = {76S05 (35A01 35D30 35K59 35R11)},
  MRNUMBER = {2847534},
MRREVIEWER = {Stelian Ion},
       DOI = {10.1007/s00205-011-0420-4},
       URL = {https://doi.org/10.1007/s00205-011-0420-4},
}

@article {filip2018rational,
    AUTHOR = {Filip, Silviu-Ioan and Nakatsukasa, Yuji and Trefethen, Lloyd
              N. and Beckermann, Bernhard},
     TITLE = {Rational minimax approximation via adaptive barycentric
              representations},
   JOURNAL = {SIAM J. Sci. Comput.},
  FJOURNAL = {SIAM Journal on Scientific Computing},
    VOLUME = {40},
      YEAR = {2018},
    NUMBER = {4},
     PAGES = {A2427--A2455},
      ISSN = {1064-8275},
   MRCLASS = {41A20 (65D15)},
  MRNUMBER = {3840899},
MRREVIEWER = {J. Szabados},
       DOI = {10.1137/17M1132409},
       URL = {https://doi.org/10.1137/17M1132409},
}

@book {BreSco94,
    AUTHOR = {Brenner, Susanne C. and Scott, L. Ridgway},
     TITLE = {The mathematical theory of finite element methods},
    SERIES = {Texts in Applied Mathematics},
    VOLUME = {15},
 PUBLISHER = {Springer-Verlag},
   ADDRESS = {New York},
      YEAR = {1994},
     PAGES = {xii+294},
      ISBN = {0-387-94193-2},

}

@article {caffarelli2013regularity,
    AUTHOR = {Caffarelli, Luis A. and Soria, Fernando and V\'{a}zquez, Juan Luis},
     TITLE = {Regularity of solutions of the fractional porous medium flow},
   JOURNAL = {J. Eur. Math. Soc. (JEMS)},
  FJOURNAL = {Journal of the European Mathematical Society (JEMS)},
    VOLUME = {15},
      YEAR = {2013},
    NUMBER = {5},
     PAGES = {1701--1746},
      ISSN = {1435-9855},
   MRCLASS = {35R11 (35B65 35K57 35K65 76S05)},
  MRNUMBER = {3082241},
MRREVIEWER = {Siegfried Carl},
       DOI = {10.4171/JEMS/401},
       URL = {https://doi.org/10.4171/JEMS/401},
}

@book{Lumer1998,
  editor    = {Lumer, G{\"u}nter and Weis, Lutz},
  title     = {{Gaussian estimates for second order elliptic divergence operators on Lipschitz
and C1 domains in: Evolution Equations and Their Applications in Physical and Life Sciences}},
  publisher = {CRC Press},
  year      = {1998},
  isbn      = {978-0824790103},
  address   = {Boca Raton, FL},
  note      = {Proceedings of the 6th International Conference on Evolution Equations, held in Bad Herrenalb, Germany}
}

@book {ElMaati2005,
    AUTHOR = {Ouhabaz, El Maati},
     TITLE = {Analysis of heat equations on domains},
    SERIES = {London Mathematical Society Monographs Series},
    VOLUME = {31},
 PUBLISHER = {Princeton University Press, Princeton, NJ},
      YEAR = {2005},
     PAGES = {xiv+284},
      ISBN = {0-691-12016-1},
   MRCLASS = {35-02 (35J70 35K20 47D06 47D07 47F05)},
  MRNUMBER = {2124040},
MRREVIEWER = {Sergey\ G.\ Pyatkov},
}

@book {Martinez2001,
    AUTHOR = {Mart\'inez Carracedo, Celso and Sanz Alix, Miguel},
     TITLE = {The theory of fractional powers of operators},
    SERIES = {North-Holland Mathematics Studies},
    VOLUME = {187},
 PUBLISHER = {North-Holland Publishing Co., Amsterdam},
      YEAR = {2001},
     PAGES = {xii+365},
      ISBN = {0-444-88797-0},
   MRCLASS = {47A60 (26A33 35K90 46M35 47D06)},
  MRNUMBER = {1850825},
MRREVIEWER = {Alberto\ Venni},
}

@book {Reed1978,
    AUTHOR = {Reed, Michael and Simon, Barry},
     TITLE = {Methods of modern mathematical physics. {IV}. {A}nalysis of
              operators},
 PUBLISHER = {Academic Press [Harcourt Brace Jovanovich, Publishers], New
              York-London},
      YEAR = {1978},
     PAGES = {xv+396},
      ISBN = {0-12-585004-2},
   MRCLASS = {47-02 (81.47)},
  MRNUMBER = {493421},
MRREVIEWER = {P.\ R.\ Chernoff},
}

@book {Brezis2011,
    AUTHOR = {Brezis, Haim},
     TITLE = {Functional analysis, {S}obolev spaces and partial differential
              equations},
    SERIES = {Universitext},
 PUBLISHER = {Springer, New York},
      YEAR = {2011},
     PAGES = {xiv+599},
      ISBN = {978-0-387-70913-0},
   MRCLASS = {35-01 (46-01 46E35 46N20 47F05)},
  MRNUMBER = {2759829},
MRREVIEWER = {Vicen\c tiu\ D.\ R\u adulescu},
}

@article {Caffarelli2010,
    AUTHOR = {Caffarelli, Luis A. and Vasseur, Alexis F.},
     TITLE = {The {D}e {G}iorgi method for regularity of solutions of
              elliptic equations and its applications to fluid dynamics},
   JOURNAL = {Discrete Contin. Dyn. Syst. Ser. S},
  FJOURNAL = {Discrete and Continuous Dynamical Systems. Series S},
    VOLUME = {3},
      YEAR = {2010},
    NUMBER = {3},
     PAGES = {409--427},
      ISSN = {1937-1632,1937-1179},
   MRCLASS = {35B65 (35Q30 76D03)},
  MRNUMBER = {2660718},
MRREVIEWER = {Luca\ Capogna},
       DOI = {10.3934/dcdss.2010.3.409},
       URL = {https://doi.org/10.3934/dcdss.2010.3.409},
}

@article {Fronzoni2025,
    AUTHOR = {Carrillo, Jos\'e{} A. and Fronzoni, Stefano and S\"uli, Endre},
     TITLE = {Finite element scheme for the fractional porous medium
              equation with fractional pressure},
   JOURNAL = {Numer. Math.},
  FJOURNAL = {Numerische Mathematik},
    VOLUME = {157},
      YEAR = {2025},
    NUMBER = {5},
     PAGES = {1537--1614},
      ISSN = {0029-599X,0945-3245},
   MRCLASS = {65M60 (35K55 35R11)},
  MRNUMBER = {4972276},
       DOI = {10.1007/s00211-025-01486-3},
       URL = {https://doi.org/10.1007/s00211-025-01486-3},
}

@article{Fronzoni2024,
      title={A minimax method for the spectral fractional Laplacian and related evolution problems}, 
      author={José A. Carrillo and Stefano Fronzoni and Yuji Nakatsukasa and Endre Süli},
      year={2025},
      eprint={2505.20560},
      archivePrefix={arXiv},
journal = {arXiv},
note = {preprint arXiv:2505.20560, submitted for publication},
      primaryClass={math.NA},
      url={https://arxiv.org/abs/2505.20560}, 
}

@incollection {Vazquez2017,
    AUTHOR = {V\'{a}zquez, Juan Luis},
     TITLE = {The mathematical theories of diffusion: nonlinear and
              fractional diffusion},
 BOOKTITLE = {Nonlocal and nonlinear diffusions and interactions: new
              methods and directions},
    SERIES = {Lecture Notes in Math.},
    VOLUME = {2186},
     PAGES = {205--278},
 PUBLISHER = {Springer, Cham},
      YEAR = {2017},
   MRCLASS = {35K57 (35R11)},
  MRNUMBER = {3588125},
MRREVIEWER = {Jeffrey R. Anderson},
}

@article {Liu2025,
    AUTHOR = {Liu, Shengquan and Zheng, Jiashan},
     TITLE = {A new result for global solvability of a
              {K}eller-{S}egel-{N}avier-{S}tokes system with nonlinear
              diffusion and matrix-valued sensitivities in three dimensions},
   JOURNAL = {J. Differential Equations},
  FJOURNAL = {Journal of Differential Equations},
    VOLUME = {426},
      YEAR = {2025},
     PAGES = {721--759},
      ISSN = {0022-0396,1090-2732},
   MRCLASS = {35Q92 (35K51 35K55 35Q35 92C17)},
  MRNUMBER = {4858609},
       DOI = {10.1016/j.jde.2025.01.071},
       URL = {https://doi.org/10.1016/j.jde.2025.01.071},
}

@article {Li2018,
    AUTHOR = {Li, Xianping},
     TITLE = {Anisotropic mesh adaptation for finite element solution of
              anisotropic porous medium equation},
   JOURNAL = {Comput. Math. Appl.},
  FJOURNAL = {Computers \& Mathematics with Applications. An International
              Journal},
    VOLUME = {75},
      YEAR = {2018},
    NUMBER = {6},
     PAGES = {2086--2099},
      ISSN = {0898-1221,1873-7668},
   MRCLASS = {65M60 (65M50 76S05)},
  MRNUMBER = {3775105},
       DOI = {10.1016/j.camwa.2017.08.005},
       URL = {https://doi.org/10.1016/j.camwa.2017.08.005},
}

@article {Zhi2022,
    AUTHOR = {Zhi, Yuan and Zhan, Huashui},
     TITLE = {The well-posedness problem of an anisotropic porous medium
              equation with a convection term},
   JOURNAL = {J. Inequal. Appl.},
  FJOURNAL = {Journal of Inequalities and Applications},
      YEAR = {2022},
     PAGES = {Paper No. 108, 22},
      ISSN = {1029-242X},
   MRCLASS = {35L65 (35B05 35K55)},
  MRNUMBER = {4468564},
       DOI = {10.1186/s13660-022-02847-4},
       URL = {https://doi.org/10.1186/s13660-022-02847-4},
}

@article {Burrage2012,
    AUTHOR = {Burrage, Kevin and Hale, Nicholas and Kay, David},
     TITLE = {An efficient implicit {FEM} scheme for fractional-in-space
              reaction-diffusion equations},
   JOURNAL = {SIAM J. Sci. Comput.},
  FJOURNAL = {SIAM Journal on Scientific Computing},
    VOLUME = {34},
      YEAR = {2012},
    NUMBER = {4},
     PAGES = {A2145--A2172},
      ISSN = {1064-8275,1095-7197},
   MRCLASS = {65M60 (26A33 35K57 35R11 45K05)},
  MRNUMBER = {2970400},
MRREVIEWER = {Mohammad\ Asadzadeh},
       DOI = {10.1137/110847007},
       URL = {https://doi.org/10.1137/110847007},
}

@article {Hale2008,
    AUTHOR = {Hale, Nicholas and Higham, Nicholas J. and Trefethen, Lloyd
              N.},
     TITLE = {Computing {${\bf A}^\alpha,\ \log({\bf A})$}, and related
              matrix functions by contour integrals},
   JOURNAL = {SIAM J. Numer. Anal.},
  FJOURNAL = {SIAM Journal on Numerical Analysis},
    VOLUME = {46},
      YEAR = {2008},
    NUMBER = {5},
     PAGES = {2505--2523},
      ISSN = {0036-1429,1095-7170},
   MRCLASS = {65F30},
  MRNUMBER = {2421045},
MRREVIEWER = {Raffaella\ Pavani},
       DOI = {10.1137/070700607},
       URL = {https://doi.org/10.1137/070700607},
}

@article {Zhang2025,
    AUTHOR = {Zhang, Lei-Hong and Yang, Linyi and Yang, Wei Hong and Zhang,
              Ya-Nan},
     TITLE = {A convex dual problem for the rational minimax approximation
              and {L}awson's iteration},
   JOURNAL = {Math. Comp.},
  FJOURNAL = {Mathematics of Computation},
    VOLUME = {94},
      YEAR = {2025},
    NUMBER = {355},
     PAGES = {2457--2494},
      ISSN = {0025-5718,1088-6842},
   MRCLASS = {65D15 (41A20 41A50 90C46)},
  MRNUMBER = {4919567},
       DOI = {10.1090/mcom/4021},
       URL = {https://doi.org/10.1090/mcom/4021},
}

@article {Harizanov2018,
    AUTHOR = {Harizanov, S. and Lazarov, R. and Margenov, S. and Marinov, P.
              and Vutov, Y.},
     TITLE = {Optimal solvers for linear systems with fractional powers of
              sparse {SPD} matrices},
   JOURNAL = {Numer. Linear Algebra Appl.},
  FJOURNAL = {Numerical Linear Algebra with Applications},
    VOLUME = {25},
      YEAR = {2018},
    NUMBER = {5},
     PAGES = {e2167, 24},
      ISSN = {1070-5325,1099-1506},
   MRCLASS = {65F99},
  MRNUMBER = {3859150},
       DOI = {10.1002/nla.2167},
       URL = {https://doi.org/10.1002/nla.2167},
}
 
\renewcommand{\appendixname}{Appendix}
\addappendix

\begin{appendices}

\section{Fractional elliptic operators}

Let $\LL$ be an operator such as the one described in the Introduction, more precisely as \eqref{mytypeofoperator}, with $-\text{div}(A(x) \nabla (\cdot))$ as the principal part of the operator. Let $\LN$ be the operator paired with a Neumann boundary condition.
For a bounded open Lipschitz domain $\Omega \subset \mathbb{R}^{d}$, $s \in (0,1)$, and $f \in \mathcal{H}_{\diamond}^{-\s}(\Omega)$, consider the following fractional equation with a Neumann boundary condition:
\begin{equation} \label{general FracPoiNeu}
\LN^{\s} u = f  \quad \textrm{in } \Omega, \quad A(x) \nabla u \cdot n = 0 \quad \textrm{on }  \partial \Omega.
\end{equation}
By a \textit{weak solution} to \eqref{general FracPoiNeu} we mean a function $u \in \Hsd(\Omega)$, $s \in (0,1)$, such that 
\[ \int_\Omega \LN^{s/2}u(x) \, \LN^{s/2}v(x) \dx = \langle f , v \rangle \quad \forall v \in \mathcal{H}_\diamond^s(\Omega),\]
where $\langle \cdot, \cdot \rangle$ denotes the duality pairing between $\mathcal{H}_{\diamond}^{-s}(\Omega)$ and $\mathcal{H}_{\diamond}^s(\Omega)$, with $\mathcal{H}_{\diamond}^{s}(\Omega) \hookrightarrow \mathcal{H}^{0}(\Omega)\equiv \mathcal{H}^{0}(\Omega)' \hookrightarrow\mathcal{H}_{\diamond}^{-s}(\Omega)$ and $\mathcal{H}^{0}_{\diamond}(\Omega)= L^2_{\diamond}(\Omega)$. In particular if $f \in L^2_{\diamond}(\Omega)$, then $\langle f, v \rangle = (f,v)$ for all $v \in \mathcal{H}^s_{\diamond}(\Omega)$, $s \in [0,1)$.

\begin{theo}[Existence and uniqueness of a weak solution] \label{general FracPoiExUniq}
Suppose that $\Omega\subset \mathbb{R}^d$ is a bounded open Lipschitz domain, $s \in (0,1)$, and $f \in \mathcal{H}^{-\s}_{\diamond}(\Omega)$. Then, the boundary-value problem \eqref{general FracPoiNeu} has a unique weak solution $u \in \mathcal{H}^s_{\diamond}(\Omega)$, and 
\begin{equation} \label{general StabFracNeu}
\|u\|_{\mathcal{H}_{\diamond}^s(\Omega)} \leq \|f\|_{\mathcal{H}_{\diamond}^{-\s}(\Omega)}.
\end{equation}
\begin{proof}
By the assumptions on $\LN$, the existence of a unique weak solution $u \in \mathcal{H}^s_\diamond(\Omega)$ is a direct consequence of the Lax--Milgram lemma applied to the variational problem: find $u \in \mathcal{H}^s_\diamond(\Omega)$ such that $a(u,v) = \ell(v)$ for all $v \in \mathcal{H}_\diamond^s(\Omega)$, with the bilinear form 
\[ a(w,v):= \int_\Omega (\LL^{s/2}w(x)) \, \LL^{s/2}v(x)) \dx,  \quad w, v \in \mathcal{H}^s_\diamond(\Omega), \]
and the linear functional $\ell(v):= \langle f , v \rangle$, $v \in \mathcal{H}^s_\diamond(\Omega)$. The stated stability inequality follows by noting that 
\[ \|u\|^2_{\mathcal{H}_\diamond^s(\Omega)} = a(u,u) = \langle f , u \rangle \leq \|f\|_{\mathcal{H}_\diamond^{-s}(\Omega)} \|u \|_{\mathcal{H}_\diamond^s(\Omega)}.\]
That completes the proof of the lemma.  
\end{proof}
\end{theo}

We note in passing that the following function space interpolation inequality holds. 

\begin{lemma} \label{general InterpFracSobLemma}
Let $\Omega\subset \mathbb{R}^d$ be a bounded open Lipschitz domain, $s \in (0,1)$, and $u \in \mathcal{H}^1_{\diamond}(\Omega)$. Then,
\begin{equation*}
    \|u\|_{\Hsd(\Omega)} \leq \|u\|_{L^{2}(\Omega)}^{1-\s} \; \| u \|_{\mathcal{H}^1(\Omega)}^{\s}.
\end{equation*}
\begin{proof}
The assertion of the lemma follows by applying H\"{o}lder's inequality and the definition \eqref{general Hs space}. Indeed, without loss of generality, let the positive eigenvalues corresponding to $\LN$ be $\lambda_k$, $k=1,2, \ldots$, we have for $\theta \in (0,1)$ that
\begin{align*}
    \|u\|_{\Hsd(\Omega)}^{2} &= \sum_{k=1}^{\infty} \lambda_{k}^{\s} u_{k}^{2} = \sum_{k=1}^{\infty} |u_{k}|^{\theta} \lambda_{k}^{\s} |u_{k}|^{2 - \theta} \leq \Bigg( \sum_{k=1}^{\infty} |u_{k}|^{\theta p}\Bigg)^{\frac{1}{p}} \Bigg( \sum_{k=1}^{\infty} \lambda_{k}^{\s q} |u_{k}|^{(2-\theta)q} \Bigg)^{\frac{1}{q}},  
\end{align*}
for $p, q \in (1, \infty)$ such that $\frac{1}{p} + \frac{1}{q} = 1$. The stated inequality then follows by taking $p = \frac{2}{\theta}$, $q = \frac{2}{2 - \theta}$ and $\theta = 2(1 - \s)$, whereby $\theta p = 2$, $sq=1$, $(2-\theta)q=2$, $1/p=\theta/2=1-s$, and $1/q=s$.
\end{proof}
\end{lemma}

In passing, we notice how the following inequality holds for functions $u \in \mathcal{H}^{s}_{\diamond}(\Omega)$ and $s \in (0,1)$:  
\begin{equation} \label{FracPoincareUse}
    \|u\|_{L^2_\diamond(\Omega)} 
    \leq \lambda_1^{-s/2} \| u \|_{\mathcal{H}^{s}_{\diamond}(\Omega)}.
\end{equation}
This inequality, in conjunction with Theorem \ref{general FracPoiExUniq}, ensures the $L^{2}_\diamond(\Omega)$ norm stability of the weak solution to the fractional equation \eqref{general FracPoiNeu}, based on the spectral definition of $\LN$, for $f \in L^{2}_\diamond(\Omega)$

We now prove some auxiliary results, which concern the case when the right-hand side datum of equation \eqref{general FracPoiNeu} has $L^{\infty}(\Omega)$ regularity. Let us denote  by $L^{\infty}_{\ast}(\Omega)$  the space of functions in $L^{\infty}(\Omega)$ with zero integral average over $\Omega$

\begin{lemma}
\label{LInftyPoiStabFracOp}
Suppose that $\Omega\subset \mathbb{R}^d$ is a bounded open Lipschitz domain, $s \in (0,1)$, and $f \in L^{\infty}(\Omega) \cap L^2_\diamond(\Omega)$. Let $u \in \mathcal{H}^{s}_\diamond(\Omega) $ be the unique solution of \eqref{general FracPoiNeu}. Then $u \in L^{\infty}(\Omega)$ and
\begin{equation} \label{LInftyPoiStab}
    \| u \|_{L^{\infty}(\Omega)} \leq K_{\Omega, s, d} \| f \|_{L^{\infty}(\Omega)},
\end{equation}
for a positive constant $K_{\Omega, s, d}$, depending only on $\Omega, s$ and $d$.
\end{lemma}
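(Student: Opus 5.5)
We will use the semigroup (Balakrishnan) representation of the negative fractional power. Since $f \in L^2_\diamond(\Omega)$, the solution of \eqref{general FracPoiNeu} can be written as
\begin{equation*}
u = \LN^{-s} f = \frac{1}{\Gamma(s)} \int_0^\infty \ee^{-t\LN} f \, t^{s-1} \dt .
\end{equation*}
This identity holds spectrally, since $\lambda_k^{-s} = \Gamma(s)^{-1}\int_0^\infty \ee^{-t\lambda_k} t^{s-1}\dt$ for $\lambda_k>0$, and $f$ has no component on $\mathrm{Ker}(\LN)$. We then estimate $\|\ee^{-t\LN} f\|_{L^\infty(\Omega)}$ separately for small and for large $t$. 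The integral converges at $t=0$ because $s>0$, so the only real issue is the decay of the integrand as $t\to\infty$.

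For $t\in(0,1]$ we use the $L^\infty$-contractivity \eqref{Linfty contractivity}, which gives $\|\ee^{-t\LN}f\|_{L^\infty}\le \|f\|_{L^\infty}$. The contribution from this range is therefore at most $\|f\|_{L^\infty}/(s\,\Gamma(s))$.

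For $t\ge 1$ we need exponential decay, and here the fact that $f$ lies in $L^2_\diamond(\Omega)$ is used. We split $\ee^{-t\LN}f = \ee^{-\frac12\LN}\,\ee^{-(t-1)\LN}\,\ee^{-\frac12\LN} f$, taken on the subspace $L^2_\diamond$, which is invariant under the semigroup.
\begin{itemize}
\item The first factor maps $L^2\to L^\infty$. By the Gaussian upper bound \eqref{Gaussianupperbound} and Cauchy--Schwarz, $\|\ee^{-\frac12\LN} g\|_{L^\infty}\le \sup_x \|W_{1/2}(x,\cdot)\|_{L^2}\|g\|_{L^2}\le C(c_1,c_2,d)\|g\|_{L^2}$.
\item The middle factor decays spectrally: $\|\ee^{-(t-1)\LN} g\|_{L^2}\le \ee^{-\lambda_1(t-1)}\|g\|_{L^2}$ for $g\in L^2_\diamond$, where $\lambda_1>0$ is the first eigenvalue on $L^2_\diamond$.
\item The last factor is $L^2$-bounded: $\|\ee^{-\frac12\LN}f\|_{L^2}\le \|f\|_{L^2}\le |\Omega|^{1/2}\|f\|_{L^\infty}$.
\end{itemize}
Combining these, $\|\ee^{-t\LN}f\|_{L^\infty}\le C|\Omega|^{1/2}\ee^{-\lambda_1(t-1)}\|f\|_{L^\infty}$. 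Since $\int_1^\infty \ee^{-\lambda_1(t-1)}t^{s-1}\dt<\infty$, the large-$t$ contribution is also bounded by a constant times $\|f\|_{L^\infty}$.

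Adding the two ranges gives \eqref{LInftyPoiStab} with $K_{\Omega,s,d}$ depending on $s$, $|\Omega|$, $\lambda_1$ and the Gaussian constants. These constants depend on the operator as well as on $\Omega$, $s$, $d$, and the statement should be read with that understanding. Measurability and the interchange of the Bochner integral with pointwise bounds are routine, since the integral converges in $L^2$.

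The main obstacle is the large-time decay in $L^\infty$. Contractivity alone gives no decay, and in case \ref{(K1)} constants are not damped at all. The argument therefore has to exploit the projection onto $L^2_\diamond$, which is why we pass through $L^2$ via the ultracontractive factor $\ee^{-\frac12\LN}$. We also need to check that this factor commutes with $\Pi_\diamond$, which follows from the spectral definition.
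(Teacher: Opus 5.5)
Your proof is correct, but it takes a different route from the paper's.

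\textbf{The paper's route.} It never splits in time and never uses $f\in L^2_\diamond(\Omega)$. It inserts the Gaussian bound \eqref{Gaussianupperbound}, stated for all $t\in(0,\infty)$, directly into $\frac{1}{\Gamma(s)}\int_0^\infty\int_\Omega W_t(x,y)f(y)\,\mathrm{d}y\,t^{s-1}\,\mathrm{d}t$. It then evaluates the time integral, which gives a multiple of $\Gamma(\frac{d}{2}-s)\,|x-y|^{2s-d}$. The bound follows because this Riesz kernel is locally integrable on the bounded domain. When a global-in-time Gaussian bound is available, this is shorter and yields more: $|u|$ is dominated pointwise by the Riesz potential of $|f|$.

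\textbf{Your route.} You use the kernel only at the single time $t=\frac12$, as an $L^2\to L^\infty$ smoothing step. The large-time decay comes from the spectral gap on $L^2_\diamond(\Omega)$.

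\textbf{Why the difference matters.} In case \ref{(K1)} the kernel has unit mass \eqref{unit mass kernel}. So a bound $c_1t^{-d/2}\mathrm{e}^{-|x-y|^2/(c_2t)}$ cannot hold for large $t$, since its $y$-integral is at most $c_1|\Omega|t^{-d/2}\to0$. The paper's estimate $|\int_\Omega W_t(x,y)f(y)\,\mathrm{d}y|\le\|f\|_{L^\infty(\Omega)}\int_\Omega W_t(x,y)\,\mathrm{d}y$ then leaves the divergent integral $\int_1^\infty t^{s-1}\,\mathrm{d}t$. The zero-mean cancellation you exploit is exactly what closes the estimate there. The paper's argument is sound in case \ref{(K0)}, where the semigroup decays exponentially and a global-in-time Gaussian bound does hold. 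Your argument also does not need $d>2s$, which the paper requires for $\Gamma(\frac{d}{2}-s)<\infty$; this is harmless for $d=2,3$.

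\textbf{Constants.} As you note, in both proofs the constant also depends on the operator. In the paper this enters through $c_1,c_2$; in yours it enters additionally through $\lambda_1$.
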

\begin{proof}
We have that $u=\LN^{-s} f$ and $\LN^{-s}$ can be expressed in terms of the heat semigroup $\{ {\mathrm e}^{-t\LN} \}_{t \geq 0}$, through the formula
\begin{equation} \label{heatsemigroupformulanegative}
\LN^{-s} u(x) = \frac{1}{\Gamma(s)} \int_{0}^{\infty} {\mathrm e}^{-t\LN} u(x) \frac{1}{t^{1-s}} \, \dt,
\end{equation}
see \cite{Martinez2001}.
Thus, starting from \eqref{heatsemigroupformulanegative} we have that
\begin{equation*}
     \LN^{-s} f(x) = \frac{1}{\Gamma(s)} \int_0^\infty \int_\Omega W_t(x,y) f(y) \dy  \frac{1}{t^{1-s}} \dt,
\end{equation*}
where $W_t(x,y)$ is the heat kernel of $\LN$.

Under our assumptions on $\LN$, by the Gaussian upper bound \eqref{Gaussianupperbound} we have 
\begin{align*}
    \int_0^\infty  \int_\Omega W_t(x,y) \dy \frac{1}{t^{1-s}} \dt &\leq \int_0^\infty  \int_\Omega c_1 \frac{\ee^{-|x-y|^2/(c_2 t)}}{t^{\frac{d}{2}}} \dy \frac{1}{t^{1-s}} \dt,
\end{align*}
for positive constants $c_1, c_2 > 0$. We then have  
\begin{align*}
 \int_{0}^{\infty} \frac{1}{t^{\frac{d}{2} + 1 - s}} {\mathrm e}^{-|x-y|^{2}/t} \, \dt &=\frac{1}{|x-y|^{d-2s}}\int_{0}^{\infty} {\mathrm e}^{-r} r^{-1-s+\frac{d}{2}} \, \textrm{d}r \\
 &= \Gamma\Big(\frac{d}{2} - s\Big) \frac{1}{|x-y|^{d-2s}}, 
\end{align*}
where in the passage from the first line to the second line we used the change of variable $r = |x-y|^{2}/t$.

Therefore
\begin{align*}
    |\LN^{-s}f(x)| \leq C_{s,d} \| f \|_{L^{\infty}(\Omega)} \int_\Omega \frac{1}{|x-y|^{d-2s}} \dy, 
\end{align*}
where $C_{s,d}$ is a positive constant depending  on $s,d$, and we can bound the right-hand side using the fact that the function $1/|z|^{d - 2s}$ is integrable for $s \in (0,1)$ over any bounded ball in $\mathbb{R}^d$. 
\end{proof}

\section{Dubinski\u{\i}'s compactness theorem} \label{dubinski appendix}
Let $\mathcal {A}$ be a linear space over the field $\mathbb{R}$ of real numbers, which will be referred as \textit{ambient space}, and suppose that $\mathcal{C}$ is a subset of $\mathcal{A}$ such that, 
\begin{equation} \label{PropSemiNorm}
    \textrm{for all } \varphi \in \mathcal{C} \textrm{ and } c \in \mathbb{R}_{\geq 0}, \quad c\,\varphi \in \mathcal{C}.
\end{equation}
Suppose further that each element $\varphi$ of a set $\mathcal{C}$ with property (\ref{PropSemiNorm}) is assigned a certain real number, denoted by $[\varphi]_{\mathcal{C}}$, such that 

\begin{itemize}
\item[$1.$] $[\varphi]_{\mathcal{C}} \geq 0$, and $[\varphi]_{\mathcal{C}}=0$ if, and only if, $\varphi=0$;
\item[$2.$] $[c \varphi]_{\mathcal{C}} = c[\varphi]_{\mathcal{C}}$ for all $c \in \mathbb{R}_{\geq 0}$.
\end{itemize}
A subset $\mathcal{C}$ satisfying \eqref{PropSemiNorm}, equipped with $[\cdot]$ satisfying these two properties is referred to as a \textit{seminormed set}.

\begin{figure}[H]
    \centering
    \includegraphics[width = 0.4\textwidth]{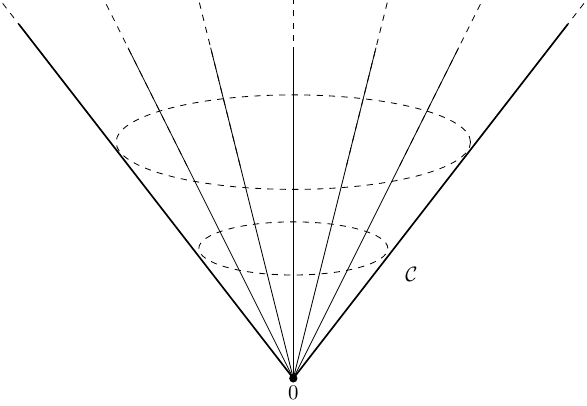}
    \caption{Seminormed set.}
    \label{fig:seminormed set}
\end{figure}

\begin{theo} [Dubinski\u{\i}'s compactness theorem; c.f. \cite{barrett2012dubinskii}] \label{Dubinsky}
    Suppose that $\mathcal{A}_{0}$ and $\mathcal{A}_{1}$ are Banach spaces, $\mathcal{A}_{0} \hookrightarrow \mathcal{A}_{1}$ (i.e., $\mathcal{A}_{0}$ is continuously embedded in $\mathcal{A}_{1}$), and $\mathcal{C}$ is a seminormed set contained in $\mathcal{A}_{0}$ such that $\mathcal{C}$ is compactly embedded in $\mathcal{A}_{0}$. Consider the set 
    \[ \mathcal{Y} := \Bigg \{ \varphi: [0, T] \to \mathcal{C}: [\varphi]_{L^{p}(0, T; \mathcal{C})} + \bigg\| \frac{\mathrm{d} \varphi}{\dt}\bigg\|_{L^{p_{1}}(0, T; \mathcal{A}_{1})} < \infty \Bigg \}, \]
    where $1\leq p \leq \infty$, $1\leq p_{1} \leq \infty$, $\| \cdot \|_{\mathcal{A}_{1}}$ is the norm of $\mathcal{A}_{1}$ and $\frac{\mathrm{d}\varphi}{\dt}$ is understood in the sense of $\mathcal{A}_{1}$-valued distributions on the open interval $(0, T)$. Then $\mathcal{Y}$, with 
    \[ [\varphi]_{\mathcal{Y}} := [\varphi]_{L^{p}(0, T; \mathcal{C})} + \bigg\| \frac{\mathrm{d} \varphi}{\dt} \bigg\|_{L^{p_{1}}(0, T; \mathcal{A}_{1})}, \]
    is a seminormed set in $L^{p}(0, T; \mathcal{A}_{0}) \cap W^{1, p_{1}}(0, T; \mathcal{A}_{1})$, and $\mathcal{Y}$ is compactly embedded in $L^{p}(0, T; \mathcal{A}_{0})$ if either $1 \leq p \leq \infty$ and $1< p_{1} < \infty$, or if $1 \leq p < \infty$ and $p_{1}=1$.
\end{theo}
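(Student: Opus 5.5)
The plan is the classical Aubin--Lions--Dubinski\u{\i} route, adapted to the fact that $\mathcal{C}$ is only a cone and not a linear space. I take the hypothesis ``$\mathcal{C}$ compactly embedded in $\mathcal{A}_0$'' to mean that $\{\varphi\in\mathcal{C}: [\varphi]_{\mathcal{C}}\le 1\}$ is relatively compact in $\mathcal{A}_0$.

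\textbf{Step 1 (preliminary inequalities).} By positive homogeneity and relative compactness of the unit seminorm ball, there is $c_0$ with $\|\varphi\|_{\mathcal{A}_0}\le c_0[\varphi]_{\mathcal{C}}$ for all $\varphi\in\mathcal{C}$. The key tool is an Ehrling-type inequality for \emph{differences}: for every $\varepsilon>0$ there is $C_\varepsilon$ such that
\begin{equation*}
\|\varphi-\psi\|_{\mathcal{A}_0}\le \varepsilon\big([\varphi]_{\mathcal{C}}+[\psi]_{\mathcal{C}}\big)+C_\varepsilon\|\varphi-\psi\|_{\mathcal{A}_1}\quad\text{for all }\varphi,\psi\in\mathcal{C}.
\end{equation*}
I would prove this by contradiction. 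Suppose it fails, and normalise so that $[\varphi_n]+[\psi_n]=1$, which homogeneity allows since $c\varphi_n,c\psi_n\in\mathcal{C}$ for $c\ge0$. Then $\|\varphi_n-\psi_n\|_{\mathcal{A}_0}>\varepsilon$ while $\|\varphi_n-\psi_n\|_{\mathcal{A}_1}\to0$. Extract $\mathcal{A}_0$-convergent subsequences $\varphi_n\to\varphi$ and $\psi_n\to\psi$. The continuous embedding $\mathcal{A}_0\hookrightarrow\mathcal{A}_1$ forces $\varphi=\psi$, which contradicts $\|\varphi-\psi\|_{\mathcal{A}_0}\ge\varepsilon$.

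\textbf{Step 2 (total boundedness in $L^p(0,T;\mathcal{A}_1)$).} Fix a bounded set $\mathcal{B}\subset\mathcal{Y}$, say $[\varphi]_{\mathcal{Y}}\le R$. Partition $[0,T]$ into $m$ intervals $I_j$ of length $\tau=T/m$. On each $I_j$, Chebyshev's inequality lets me choose a point $t_j$ with $[\varphi(t_j)]_{\mathcal{C}}\le M_j(\varphi)$, where $M_j(\varphi)$ is comparable to $\tau^{-1/p}\|[\varphi]\|_{L^p(I_j)}$ (with the obvious modification when $p=\infty$). I then set $\varphi_\tau:=\varphi(t_j)$ on $I_j$.

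\begin{itemize}
\item The error in $\mathcal{A}_1$ is controlled by the derivative: $\|\varphi(t)-\varphi(t_j)\|_{\mathcal{A}_1}\le\int_{I_j}\|\varphi'\|_{\mathcal{A}_1}$.
\item Hence $\|\varphi-\varphi_\tau\|_{L^p(0,T;\mathcal{A}_1)}\to0$ as $\tau\to0$, uniformly over $\mathcal{B}$. For $p_1>1$ this follows from H\"older's inequality. For $p_1=1$ with $p<\infty$ it follows from absolute continuity and a translation-type estimate, which gives a bound of order $\tau^{1/p}R$.
\item For fixed $\tau$, the values $\varphi(t_j)$ lie in $\mathcal{C}$ with $[\varphi(t_j)]_{\mathcal{C}}\lesssim \tau^{-1/p}R$. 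By Step 1 and the compact embedding, these values range over a relatively compact subset of $\mathcal{A}_1$.
\item So the piecewise-constant approximants form a totally bounded subset of $L^p(0,T;\mathcal{A}_1)$, and therefore $\mathcal{B}$ is totally bounded in $L^p(0,T;\mathcal{A}_1)$.
\end{itemize}

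\textbf{Step 3 (upgrade to $L^p(0,T;\mathcal{A}_0)$).} Apply the inequality of Step 1 pointwise in $t$ to $\varphi(t),\psi(t)\in\mathcal{C}$ and take $L^p$ norms. This gives
\begin{equation*}
\|\varphi-\psi\|_{L^p(\mathcal{A}_0)}\le 2\varepsilon R+C_\varepsilon\|\varphi-\psi\|_{L^p(\mathcal{A}_1)}\quad\text{for }\varphi,\psi\in\mathcal{B}.
\end{equation*}
Hence a sequence in $\mathcal{B}$ that is Cauchy in $L^p(\mathcal{A}_1)$ is Cauchy in $L^p(\mathcal{A}_0)$, and it converges in $L^p(0,T;\mathcal{A}_0)$ by completeness. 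That $\mathcal{Y}$ is a seminormed set contained in $L^p(0,T;\mathcal{A}_0)\cap W^{1,p_1}(0,T;\mathcal{A}_1)$ is immediate from the definitions together with $\|\varphi\|_{\mathcal{A}_0}\le c_0[\varphi]_{\mathcal{C}}$.

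\textbf{Main obstacle.} I expect the hardest part to be Step 2 in the borderline cases, where only $L^p$ (not $L^\infty$) control of $[\varphi(t)]_{\mathcal{C}}$ is available and $\mathcal{C}$ is not convex. Both features rule out Simon-type time averages, which is why I sample the function at well-chosen points rather than average it.

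\begin{itemize}
\item When $p_1=1$, the uniform smallness of $\|\varphi-\varphi_\tau\|_{L^p(\mathcal{A}_1)}$ needs $p<\infty$; this matches the hypothesis of the theorem.
\item When $p=\infty$, I must use $p_1>1$ to obtain equicontinuity in $\mathcal{A}_1$. Combined with $\|[\varphi]\|_{L^\infty}\le R$, I would then argue via Arzel\`a--Ascoli in $C([0,T];\mathcal{A}_1)$ and the same Ehrling upgrade, with essential suprema in place of $L^p$ norms.
\end{itemize}
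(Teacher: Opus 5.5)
The paper gives no proof of this statement; it quotes it from \cite{barrett2012dubinskii}. The relevant comparison is therefore with the usual Ehrling-inequality-plus-compactness-criterion proofs of Aubin--Lions--Dubinski\u{\i} type results.

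Your argument is correct and self-contained, and it addresses the two points where the cone setting differs from the linear one.

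\emph{The Ehrling step.} The single-element inequality $\|\varphi\|_{\mathcal{A}_0}\le\varepsilon[\varphi]_{\mathcal{C}}+C_\varepsilon\|\varphi\|_{\mathcal{A}_1}$ for $\varphi\in\mathcal{C}$ cannot be applied to $\varphi_n-\varphi_m$, which in general does not lie in $\mathcal{C}$. Your two-point version avoids this. The contradiction proof is sound because it uses only the relative compactness of each normalised sequence separately, together with injectivity of $\mathcal{A}_0\hookrightarrow\mathcal{A}_1$.

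\emph{The approximation step.} Sampling at Chebyshev-good points gives uniform errors $T^{1/p}\tau^{1-1/p_1}R$ when $p_1>1$, and $\tau^{1/p}R$ when $p_1=1$, $p<\infty$. The first bound holds for every $p$, including $p=\infty$, so your separate Arzel\`a--Ascoli argument is unnecessary. The second bound shows why the case $p=\infty$, $p_1=1$ has to be excluded. For fixed $\tau$ the sampled values lie in a fixed relatively compact subset of $\mathcal{A}_0$, so total boundedness follows.

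Two small corrections.

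First, non-convexity of $\mathcal{C}$ does not actually rule out time averages. Let $B_1$ be the unit seminorm ball of $\mathcal{C}$. Then $\int_{t_1}^{t_2}\varphi\,\mathrm{d}t\in\lambda\,\overline{\mathrm{co}}(\overline{B_1})$ with $\lambda=\int_{t_1}^{t_2}[\varphi(t)]_{\mathcal{C}}\,\mathrm{d}t$. The set $\overline{\mathrm{co}}(\overline{B_1})$ is compact in $\mathcal{A}_0$ by Mazur's theorem. Since your Ehrling step is applied only to the original functions, the approximants never need to lie in $\mathcal{C}$. Steklov averages or Simon's criterion would therefore work equally well; your sampling route simply replaces Mazur's theorem with a Chebyshev argument.

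Second, membership of $\mathcal{Y}$ in $L^p(0,T;\mathcal{A}_0)$ is not immediate from $\|\varphi\|_{\mathcal{A}_0}\le c_0[\varphi]_{\mathcal{C}}$ alone. You also need strong measurability of $t\mapsto\varphi(t)$ into $\mathcal{A}_0$, and you use it again in Step 3 when invoking completeness of $L^p(0,T;\mathcal{A}_0)$. It does hold. On each set $\{t:[\varphi(t)]_{\mathcal{C}}\le n\}$ the values lie in an $\mathcal{A}_0$-compact set, on which the $\mathcal{A}_0$- and $\mathcal{A}_1$-topologies coincide. These sets exhaust $(0,T)$ up to a null set.
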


The example of our application of  Dubinski\u{\i}'s theorem, as in \cite{barrett2011existence, Fronzoni2025}, is by selecting 
\[ \mathcal{A}_{0} = L^{1}(\Omega), \textrm{ with the usual Lebesgue norm } \|\varphi\|_{\mathcal{A}_{0}} := \int_{\Omega} |\varphi| \dx \]
and 
\[ \mathcal{C} = \Bigg \{ \phi \in \mathcal{A}_{0}: \varphi \geq 0 \textrm{ with } \int_{\Omega}\big| \nabla \sqrt{\varphi} \big|^{2} \dx < \infty \Bigg \};\]
and, for $\varphi \in \mathcal{C}$, we define 
\[ [\varphi]_{\mathcal{C}} := \|\varphi\|_{\mathcal{A}_{0}} + \int_{\Omega}\big| \nabla \sqrt{\varphi} \big|^{2} \dx.\]
It is a straightforward matter to confirm that (\ref{PropSemiNorm}) and properties 1. and 2. stated above hold, rendering $\mathcal{C}$ a seminormed subset of the ambient space $\mathcal{A}_{0}=L^1(\Omega)$. Finally we put 
\[ \mathcal{A}_{1} = H^{-\beta}(\Omega):= [H^{\beta}(\Omega)]'\quad \mbox{with $\beta = d + 1$}, \]
equipped with the dual norm $\|\varphi\|_{\mathcal{A}_{1}} := \|\varphi\|_{H^{-\beta}(\Omega)}$. The choice of $\mathcal{A}_{1}$ is motivated in \cite{Fronzoni2025}. This set of choices satisfies the conditions of Theorem \ref{Dubinsky}, with $\mathcal{C} \hookrightarrow \mathcal{A}_{0}$ compactly and $\mathcal{A}_{0} \hookrightarrow \mathcal{A}_{1}$
(cf. \cite{barrett2011existence}).

\end{appendices}

\end{document}